\documentclass[a4paper,11pt]{amsart}

\usepackage{import}
\usepackage[percent]{overpic}

\usepackage{xcolor, hyperref}
\usepackage[utf8x]{inputenc}%encodage utf8
\usepackage[english]{babel}
\usepackage[T1]{fontenc}%permet les accents et adopte les regles typographiques francaises, chapitre théorème...package à rajouter si la lanque est le francais
\usepackage{graphicx}%permet d'inserer des images
\usepackage{enumerate}%contient des options de l'environnement enumerate
\usepackage{amsthm, amsmath, amssymb}%ajoute des options/symboles... de maths de l'ams
\usepackage{fancyhdr}%package pour personnaliser les pieds de page et entetes
\usepackage{lmodern}

\usepackage{xypic}

\usepackage{pgfplots}
\pgfplotsset{compat=1.15}
\usepackage{mathrsfs}
\usetikzlibrary{arrows}

\usepackage{pgfgantt}

\usepackage{geometry}%pour régler les marges
\usepackage{mathtools, stmaryrd}
\usepackage{xparse}

\newtheoremstyle{Largetheorem}  % nouveau style de theoreme : theoreme ecrit en gros
{3pt}       % Space above 
{3pt}       % Space below
{}      % Body font
{}          % Indent amount 
{\Large \bfseries}  % Theorem head font
{.}         % Punctuation after theorem head
{.5em}      % Space after theorem head
{}          % Theorem head spec

\theoremstyle{Largetheorem}%theoreme ecrit en gros, texte en italique

\theoremstyle{definition}%texte pas en italique

\newtheorem*{remark}{Remark}

\theoremstyle{plain} %texte en italique
\newtheorem{theo}{Theorem}[section]
\newtheorem{lemma}[theo]{Lemma}
\newtheorem{prop}[theo]{Proposition}
\newtheorem{coro}[theo]{Corollary}

\newtheorem{conjecture}[theo]{Conjecture}

\newcommand{\R}{\mathbb{R}}
\newcommand{\C}{\mathbb{C}}
\newcommand{\Z}{\mathbb{Z}}
\newcommand{\N}{\mathbb{N}}
\newcommand{\Q}{\mathbb{Q}}

\renewcommand{\leq}{\leqslant}
\renewcommand{\geq}{\geqslant}
\renewcommand{\d}[1]{{\ensuremath{\,\text{d}#1}}}

\newcommand{\gauss}[2]{\begin{bmatrix}#1\\#2\end{bmatrix}_\lambda}

\def\d{{\rm d}}
\def\i{{\rm i}}
\def\ds{\displaystyle}
\def\e{{\rm e}}

\def\O{{\mathcal O}}

\def\eps{{\varepsilon}}

\newcommand{\Qp}{\mathbb{Q}_p}

\usepackage{eurosym}
\usepackage{array}

\usepackage{multirow}

\def\ob{{\llbracket}}
\def\cb{{\rrbracket}}

\def\Sing{{\rm Sing}}

\def\F{{\mathbb F}}

\def\resultant{\operatorname{Res}}

\def\Qbar{\overline\Q}

\begin{author}[X.~Buff]{Xavier Buff}
\address{ %
Univ Toulouse, INSA Toulouse, CNRS, IMT, Toulouse, France}
\email{xavier.buff@math.univ-toulouse.fr}
\end{author}

\begin{author}[V.~Huguin]{Valentin Huguin}
\address{LAMFA (Laboratoire Amiénois de Mathématique Fondamentale et Appliquée), UMR CNRS 7352, Université de Picardie Jules Verne, 33 rue Saint-Leu, 80039 Amiens, France}
\email{valentin.huguin@u-picardie.fr}
\end{author}

\begin{author}[L.~Vivas]{Liz Vivas}
\address{Department of Mathematics, The Ohio State University, Columbus, OH}
\email{vivas@math.osu.edu}
\end{author}

\thanks{The first and second authors were partially supported by the French National Research Agency under the project DynAtrois (ANR-24-CE40-1163). 
They would also like to thank the Isaac Newton Institute for Mathematical Sciences, Cambridge, for support and hospitality during the programme Complex dynamics: interactions and influences. This work was supported by EPSRC grant EP/Z000580/1 and NSF grant DMS 2453797.}

\begin{document}

\title{An Arithmetic Approach to Parabolic Multiplicity in Complex Dynamics}

\begin{abstract}
When $\omega$ is a primitive $n$-th root of unity,  the quadratic polynomial $F(z) = \omega z(1-z)$ and the entire map $F(z) = \omega z {\rm e}^{-z}$ both have a parabolic fixed point at $0$. Their parabolic multiplicity is equal to $1$, that is, $F^{\circ n}(z) = z \bigl(1+c z^n+{\mathcal O}(z^{n+1})\bigr)$ with $c \neq 0$. The classical proof of this fact is transcendental. We present an arithmetic proof which may be extracted from \cite{bst} in the transcendental case and requires working in ${\mathbb Z}/(n-1){\mathbb Z}$, and which is new in the polynomial case and requires working in the $p$-adic field ${\mathbb Q}_p$ for a suitable prime $p$ such that the order of $2$ in $({\mathbb Z}/p{\mathbb Z})^\times$ is exactly $n$. 
\end{abstract}

\maketitle

\section{Introduction}

Assume $n\geq 1$ is an integer and let $\omega\in \C$ be a primitive $n$-th root of unity.
Let $F:\C\to \C$ be an entire map fixing $0$ with derivative $\omega$. If $F$ is not a rotation, then there exists an integer $\nu\geq 1$, called the parabolic multiplicity of $F$ at $0$, such that 
\[F^{\circ n}(z) = z  \bigl(1+ cz^{\nu n } + \O(z^{\nu n+1})\bigr)\quad \text { with } \quad c \neq 0.\]
The parabolic multiplicity $\nu$ is closely related to the number of singular values of $F$. 
A point $z\in \C$ is a regular value of $F$ if there exists an open neighborhood $V$ of $z$ in $\C$ such that the restriction $F: F^{-1}(V)\to V$ is a covering map. The point $z$ is a singular value of $F$ otherwise. Denote by $\Sing(F)$ the set of singular values of $F$. For example: 
\begin{itemize}
\item if $F(z) = \omega z(1-z)$, then $\Sing(F) = \{\frac{\omega}{4}\}$;
\item if $F(z) = \omega z\e^{-z}$, then $\Sing(F) = \{0;\frac{\omega}{\e}\}$.
\end{itemize}
The following theorem goes back to Fatou in the polynomial case \cite{f1,f2} (see also \cite[\S 10]{m} for a modern exposition) and extends readily to entire maps (see \cite{be} for example). 
\begin{theo}\label{theo:fatou}
If $\omega$ is a root of unity and if $F$ is an entire map, not a rotation, fixing $0$ with multiplier $\omega$ and parabolic multiplicity $\nu$, then $0$ attracts the orbit of at least $\nu$ singular values of $F$. 
\end{theo}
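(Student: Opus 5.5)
The plan is to follow Fatou's classical argument via the Leau--Fatou flower and the linearization of attracting petals.

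\emph{Step 1: the petals.} Set $G = F^{\circ n}$, so that $G(z) = z\bigl(1+cz^{\nu n}+\O(z^{\nu n+1})\bigr)$ with $c\neq 0$. By the Leau--Fatou flower theorem, $G$ has exactly $\nu n$ attracting petals $P_1,\dots,P_{\nu n}$ at $0$. Each $P_j$ is an open set with $G(P_j)\subset P_j$, and every orbit in $P_j$ converges to $0$ tangentially to an attracting direction $v_j$, where $c\,v_j^{\nu n}=-1$. Since $F'(0)=\omega$, the map $F$ permutes the attracting directions by multiplication by $\omega$. This permutation has no fixed points, and all its cycles have length exactly $n$, because $\omega$ is primitive. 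Hence the petals fall into exactly $\nu$ cycles under $F$. For each cycle, the union of the immediate basins $B_j$ (the connected components of the basin of $0$ under $G$ containing $P_j$) is $F$-invariant, and the $\nu$ resulting families are pairwise disjoint. It therefore suffices to show that each cycle of basins contains the orbit of some singular value of $F$.

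\emph{Step 2: Fatou coordinates.} On a petal $P$, the map $G$ is conjugate to the translation $T(w)=w+1$ via a univalent Fatou coordinate $\phi: P\to\C$. The relation $\phi\circ G=\phi+1$ lets us extend $\phi$ to a holomorphic map $\Phi: B\to\C$ on the immediate basin $B$ containing $P$, still satisfying $\Phi\circ G = \Phi+1$. Suppose that no point of $\Sing(G)$ lies in $B$ (for entire $G$, "singular value" includes asymptotic values and limits of critical values). I will show that the inverse $\psi=\phi^{-1}$, defined on a right half-plane, then continues analytically to all of $\C$. Indeed, $\psi(w-1)$ is obtained from $\psi(w)$ by choosing a branch of $G^{-1}$, and the absence of singular values of $G$ in $B$ allows such branches to be continued along every path inside $B$. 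Since $\C$ is simply connected, this yields an entire map $\psi:\C\to B$ with $G\circ\psi=\psi\circ T$. Then $\Phi\circ\psi=\mathrm{id}$, so $\psi$ is injective, hence affine, and its image is all of $\C$. So $B=\C$, which contradicts the existence of repelling directions and of other petals (for instance, the repelling petals lie outside $B$).

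\emph{Step 3: from $G$ to $F$.} Since $\Sing(F^{\circ n})\subset \bigcup_{k<n}F^{\circ k}(\Sing(F))$, the singular value of $G$ found in $B$ is of the form $F^{\circ k}(s)$ with $s\in\Sing(F)$. The orbit of $s$ under $F$ then enters the cycle of basins and converges to $0$. Distinct cycles yield distinct singular values $s$, because the basin cycles are disjoint and an orbit converging to $0$ through one cycle cannot also lie in another. This gives at least $\nu$ singular values attracted to $0$.

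\emph{Main obstacle.} The delicate step is Step 2 in the transcendental setting. The issue is justifying the continuation of the branches of $G^{-1}$ along the backward translates when asymptotic values are present, and making sure the singular values of $G$ near the boundary of $B$ do not interfere. I would handle this by continuing $\psi$ from the half-plane $\{\operatorname{Re} w > R\}$ to the half-planes $\{\operatorname{Re} w > R-m\}$ one step at a time. At each step I would use the fact that $G: G^{-1}(B)\cap B\to B$ restricts to a covering over $B$ minus the singular set, and then lift the simply connected half-plane through this covering.
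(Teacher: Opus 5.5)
Your proof is correct. It is the classical transcendental argument that the paper does not reproduce but only cites (Fatou; Milnor, \S10; Bergweiler): group the $\nu n$ attracting petals of $G=F^{\circ n}$ into $\nu$ cycles under $F$, then show that if an immediate basin $B$ contained no singular value of $G$, then $G\colon B\to B$ would be a covering, so the inverse Fatou coordinate would lift to an injective entire map $\C\to B$, which is impossible. The only imprecision is harmless: for transcendental $F$ the correct inclusion is $\Sing(F^{\circ n})\subset\bigcup_{k<n}\overline{F^{\circ k}(\Sing(F))}$, with closures, but since $B$ is open, a singular value of $G$ in $B$ still yields some $F^{\circ k}(s)\in B$ with $s\in\Sing(F)$.
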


\begin{coro}\label{cor:quad}
If $\omega$ is a root of unity and $F(z) = \omega z(1-z)$, then the parabolic multiplicity of $F$ at $0$ is equal to $1$. 
\end{coro}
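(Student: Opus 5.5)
This is a direct consequence of Theorem~\ref{theo:fatou}, so the plan is simply to check its hypotheses and count singular values.

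Set $F(z)=\omega z(1-z)$ with $\omega$ a primitive $n$-th root of unity.

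\emph{Hypotheses.} $F$ is a polynomial, hence entire. It fixes $0$ and $F'(0)=\omega$. It is not a rotation, since it has degree $2$. Hence the parabolic multiplicity $\nu\geq 1$ is well defined, and Theorem~\ref{theo:fatou} applies: $0$ attracts the orbits of at least $\nu$ singular values of $F$.

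\emph{Counting singular values.} We determine $\Sing(F)$, as recalled in the introduction. A polynomial of degree $2$ is a proper map $\C\to\C$. Away from critical values it is therefore a covering map, so its singular values are exactly its critical values. Since $F'(z)=\omega(1-2z)$, the only critical point is $z=\tfrac12$, and the only critical value is $F(\tfrac12)=\tfrac{\omega}{4}$. Thus $\Sing(F)=\{\tfrac{\omega}{4}\}$ has exactly one element.

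\emph{Conclusion.} At most one singular value can be attracted to $0$, so $\nu\leq 1$. Combined with $\nu\geq 1$, this gives $\nu=1$.

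The only point needing care is the identification of singular values with critical values, which uses properness of polynomials. There is no real obstacle; the substance lies in Theorem~\ref{theo:fatou}, which we take as given. This is the transcendental proof that the paper later replaces with an arithmetic one.
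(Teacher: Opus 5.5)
Your argument is correct. Since $F$ is a quadratic polynomial it is proper, so $\Sing(F)$ is its set of critical values, namely $\{\omega/4\}$, and Theorem~\ref{theo:fatou} then forces $\nu\le 1$. This is exactly the classical derivation the paper has in mind when it labels the statement a corollary of Theorem~\ref{theo:fatou}.

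However, the proof the paper actually writes out (\S\ref{sec:padic}, with $d=2$) takes a genuinely different, arithmetic route. It runs as follows:
\begin{enumerate}
\item One must show that $c_n\in\Z[\lambda]$ does not vanish on $\Omega_n$. Lemma~\ref{lem:aqcq} ($a_n+c_n\equiv 0\pmod{\Phi_n}$) replaces $c_n$ by $a_n=\pi_n g_n$, the denominator-cleared coefficient of the linearizer.
\item The cases $n=1$, $n=2$ and $n=6$ are checked directly.
\item Otherwise, Bang--Zsigmondy supplies a prime $p$ in which $2$ has order exactly $n$, and Hensel's lemma gives a root $\omega\in\Qp$ of $\Phi_n$ with $|\omega-2|_p=|2^n-1|_p$.
\item At $\lambda=2$ the linearizer $G_2(z)=(1-\e^{-2z})/2$ is explicit, because $F_2$ is conjugate to $z^2$. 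A first-order Taylor expansion of $a_n$ at $\lambda=2$ is combined with the perturbative bound $|g_n'(2)|_p\ge|2^n-1|_p^{-1}$, which yields $|a_n(\omega)|_p\ge|2^n-1|_p>0$.
\item Irreducibility of $\Phi_n$ over $\Q$ transfers this non-vanishing at one $p$-adic root to all complex primitive $n$-th roots of unity.
\end{enumerate}

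Your route is two lines, but it rests entirely on the transcendental Leau--Fatou theory: petals, and the fact that each cycle of parabolic basins must capture a singular value. The paper's route bypasses that theory and treats the question purely as coprimality of $c_n$ and $\Phi_n$ in $\Z[\lambda]$. It also yields quantitative $p$-adic information, namely $|c|_p\ge|d^n-1|_p$ in Proposition~\ref{prop:padic}. The price is Bang--Zsigmondy, Hensel's lemma, and a few exceptional cases handled by direct computation.
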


\begin{coro}\label{cor:exp}
If $\omega$ is a root of unity and $F(z) = \omega z\e^{-z} $, then the parabolic multiplicity of $F$ at $0$ is equal to $1$. 
\end{coro}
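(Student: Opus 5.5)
We obtain the statement as a direct consequence of Theorem~\ref{theo:fatou}, exactly as in the quadratic case. Let $\omega$ be a primitive $n$-th root of unity and $F(z)=\omega z\e^{-z}$. The plan has three steps: check that the hypotheses of Theorem~\ref{theo:fatou} hold, identify $\Sing(F)$, and rule out the singular value that cannot be attracted to $0$.

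\emph{Hypotheses and singular values.} The map $F$ is entire, $F(0)=0$ and $F'(0)=\omega$. It is not a rotation since it is not linear; for instance $F(z)=\omega z - \omega z^2+\O(z^3)$. Hence the parabolic multiplicity $\nu\geq 1$ is defined. Next we compute $\Sing(F)=\{0;\frac{\omega}{\e}\}$, as recalled in the introduction.
\begin{itemize}
\item $F'(z)=\omega(1-z)\e^{-z}$ vanishes only at $z=1$, which gives the critical value $F(1)=\omega/\e$.
\item The only asymptotic value is $0$, approached as $\Re z\to+\infty$. Indeed, writing $F=\omega\cdot G$ with $G(z)=z\e^{-z}$, the map $G$ is a composition-type map with finitely many singular values. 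Since $G$ has a unique critical point, away from $G^{-1}(\{0;1/\e\})$ it is a covering; this is standard, for example via the factorization $w=-z$, $G=-w\e^{w}$.
\end{itemize}

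\emph{Exclusion of $0$.} The value $0$ is fixed by $F$, so its orbit is the constant orbit $\{0\}$. We must decide whether this counts as "being attracted" by the parabolic point. In Theorem~\ref{theo:fatou}, the relevant notion is that the singular value lies in an attracting petal, or more precisely in the parabolic basin, that is, its orbit converges to $0$ without landing on $0$. This is the main subtle point. Fatou's argument produces singular values in each cycle of immediate basins (parabolic petals). Those petals are disjoint from the fixed point itself, and the orbit of $0$ never enters a petal. So $0$ does not count, and only $\omega/\e$ can lie in the basin.

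\emph{Conclusion.} The proof of Theorem~\ref{theo:fatou} yields $\nu$ distinct cycles of petals, each containing the orbit of a singular value. These singular values are distinct, since a single orbit lies in only one cycle of basins. Therefore $\nu\leq \#\bigl(\Sing(F)\setminus\{0\}\bigr)=1$, and since $\nu\geq1$ we get $\nu=1$. The step expected to be the main obstacle is the bookkeeping on the asymptotic value $0$: we must justify, from the statement of Theorem~\ref{theo:fatou} as intended (singular values in the parabolic basin), that the fixed singular value $0$ is not counted. If the theorem is read literally, with "attracts" possibly including $0$ itself, it gives only $\nu\leq 2$, and the stronger petal formulation is needed.
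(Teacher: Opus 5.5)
Your argument is correct, but it is the classical transcendental deduction from Theorem~\ref{theo:fatou}, which the paper deliberately sets aside. The paper's own proof of Corollary~\ref{cor:exp} is arithmetic. You are right that Theorem~\ref{theo:fatou}, read literally, only gives $\nu\leq 2$, since $0\in\Sing(F)$ has a constant orbit. Your repair via the standard basin form of Fatou's theorem (valid for entire maps) is sound. The $\nu n$ attracting petals lie in distinct immediate basins, which $F$ permutes in $\nu$ cycles of length $n$. Each cycle contains a singular value, and none of these Fatou components contains $0$, which lies in the Julia set. (The paper implicitly needs the same reading when it calls this statement a corollary of Theorem~\ref{theo:fatou}.) One sentence is weak: a unique critical point tells you nothing about asymptotic values, so your covering remark does not establish $\Sing(F)=\{0;\frac{\omega}{\e}\}$. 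Just cite this fact, as the paper does.

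The paper instead uses no dynamics at all. It writes the linearizer as $G_\lambda(z)=z\exp\bigl(H_\lambda(z)\bigr)$ and derives from $\frac{H_\lambda(\lambda z)-H_\lambda(z)}{z}=-\exp\bigl(H_\lambda(z)\bigr)$ that $b_k=(k-1)!\,\pi_k h_k\in\Z[\lambda]$. It shows $b_n\equiv-(n-1)!\,c_n\pmod{\Phi_n}$ through Lemma~\ref{lem:aqcq}, and proves the Broer--Sim\'o--Tatjer congruence $b_{k+1}\equiv\pi_k/\pi_1^k\pmod{k}$. For $n\geq3$ this gives $\resultant(\Phi_n,b_n)\equiv\resultant(\Phi_n,\pi_{n-1}/\pi_1^{n-1})\pmod{n-1}$. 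By Apostol's theorem the right-hand side is a product of powers of primes dividing $n$, hence a unit modulo $n-1$; the cases $n=1,2$ are checked directly.

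Your route is shorter and explains the result conceptually: there is only one free singular value. But it rests on the Leau--Fatou flower theorem, Fatou's theorem for entire maps and the determination of the asymptotic values. That is exactly the transcendental input the paper wants to avoid. The arithmetic route is self-contained and yields more: $\beta_n$ is a nonzero integer with explicit congruences modulo $n-1$ (Proposition~\ref{prop:beta}), and it handles all $\omega\in\Omega_n$ simultaneously.
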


The classical proof of Theorem~\ref{theo:fatou} is transcendental. In this paper, we present arithmetic proofs of these two corollaries.  An arithmetic proof of Corollary~\ref{cor:exp} may be extracted from \cite[Proposition B.1]{bst}. Somewhat surprisingly, obtaining an arithmetic proof is easier for the transcendental map $\omega ze^{-z}$ than for the quadratic polynomial $\omega z(1-z)$.

In the transcendental case, our proof goes as follows. We consider the family of transcendental entire maps $F_\lambda(z) = \lambda z \e^{-z}$ with $\lambda \in \C^\ast$. Given an integer $n\geq 1$, we may write 
\[F_\lambda^{\circ n}(z)  = \lambda^n z \sum_{k\geq  0} c_{n,k}(\lambda) z^k\quad \text{with}\quad c_{n,k}(\lambda)\in \Q[\lambda].\]
Let $\Omega_n\subset \Qbar$ be the set of primitive $n$-th roots of unity and let $\Phi_n(\lambda)\in \Z[\lambda]$ be the $n$-th cyclotomic polynomial:
\[
\Phi_n(\lambda):=\prod_{\omega \in \Omega_n}(\lambda-\omega).
\]
We will prove that the resultant 
\[\beta_n := \resultant(\Phi_n,-(n-1)!c_{n,n}) = \prod_{\omega\in \Omega_n} \bigl(-(n-1)! c_{n,n}(\omega)\bigr),\]
is a nonzero integer, which implies that the parabolic multiplicity of $F_\omega$ at $0$ is equal to $1$ for any $\omega\in \Omega_n$. Our proof requires working in $\Z/(n-1)\Z$. Our result is the following. 

\begin{prop}\label{prop:beta}
Assume $F_\lambda(z) := \lambda  z \e^{-z}$,  $n\in \N^\ast$, 
\[F_\lambda^{\circ n}(z)  = \lambda ^n z \sum_{k\geq  0} c_{n,k}(\lambda) z^k
\quad\text{and}\quad \beta_n := \resultant(\Phi_n,-(n-1)!c_{n,n}).\]
Then, $\beta_n\in \Z^\ast$. More precisely, $\beta_1 = \beta_2 =1$, 
\begin{itemize}
\item if $n\geq 3$ is a power of a prime $p$, then $p^{n-1} \beta_n\equiv  1 \pmod{n-1}$ and
\item if $n\geq 5$ is not a prime power, then  $\beta_n\equiv  1 \pmod{n-1}$.
\end{itemize}
\end{prop}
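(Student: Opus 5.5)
The approach is to compute $-(n-1)!\,c_{n,n}(\lambda)$ modulo $n-1$ in the ring $(\Z/(n-1)\Z)[\lambda]/(\Phi_n)$, identify it there with a product of explicit units, and then take the norm over $\Omega_n$. The cases $n=1,2$ are handled by direct expansion: $F_\lambda(z)=\lambda z(1-z+\cdots)$ gives $c_{1,1}=-1$, hence $\beta_1=1$, and a short computation of $F_\lambda^{\circ 2}$ gives $\beta_2=1$.

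\textbf{Integrality and a recursion.} Write $F_\lambda^{\circ m}(z)=\lambda^m z A_m(z)$ with $A_m(z)=\sum_k c_{m,k}(\lambda)z^k$. The relation $F_\lambda^{\circ(m+1)}=F_\lambda\circ F_\lambda^{\circ m}$ gives
\[A_{m+1}(z)=A_m(z)\exp\bigl(-\lambda^m zA_m(z)\bigr),\qquad A_0=1.\]
Iterating, $A_n(z)=\exp\bigl(-\sum_{m=0}^{n-1}\lambda^m zA_m(z)\bigr)$. I would first show by induction that $k!\,c_{m,k}(\lambda)\in\Z[\lambda]$, which amounts to the Hurwitz property of the ring of integer-valued exponential power series being preserved under products and $\exp$ of series without constant term. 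This makes $(n-1)!\,c_{n,n}$ an integer polynomial. That requires one extra factor of $n$ to be absorbed, which I expect to come from a Lagrange-inversion or cyclic-symmetry identity. The target is a closed formula of the form
\[n!\,c_{n,n}(\lambda)=\sum(\text{multinomial coefficients})\cdot(\text{monomials in }\lambda^{m}),\]
with $n$ dividing every term.

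\textbf{Reduction modulo $n-1$.} In the resulting formula, the multinomial coefficients of the form $(n-1)!/\prod j_i!$ vanish modulo $n-1$ unless the partition is extremal: one block of size $n-1$, or all blocks of size $1$. Only the terms coming from the linear parts $-\lambda^m z$ of the exponents, and from products of $n-1$ of them, survive. Keeping only these terms, I expect to find, in $(\Z/(n-1)\Z)[\lambda]/(\Phi_n)$, a congruence of the shape
\[-(n-1)!\,c_{n,n}(\lambda)\equiv \prod(\text{factors }1-\lambda^j)^{-1}\quad\text{or}\quad (1-\lambda)^{-(n-1)}\cdot(\text{unit}).\]
Geometric sums $\sum_{m<n}\lambda^m$ vanish modulo $\Phi_n$, which kills the non-extremal cross terms.

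\textbf{Taking the norm.} Taking $\prod_{\omega\in\Omega_n}$ turns the congruence into one between integers modulo $n-1$. We have $\prod_\omega(1-\omega)=\Phi_n(1)$, which equals $p$ if $n$ is a power of a prime $p$ and $1$ otherwise. This yields $p^{n-1}\beta_n\equiv1$, respectively $\beta_n\equiv1 \pmod{n-1}$. In both cases $\beta_n\neq0$ once $n-1\geq2$, so $\beta_n\in\Z^\ast$. The vanishing of $c_{n,n}(\omega)$ would force $\beta_n=0$, so the parabolic multiplicity of $F_\omega$ is $1$.

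\textbf{Main obstacle.} The hard step is the middle one: isolating exactly which terms of $(n-1)!\,c_{n,n}$ survive modulo $n-1$ when $n-1$ is composite. For composite $n-1$, the usual "$p\mid\binom{p}{j}$" argument fails and $(n-1)!$ is itself $0$ modulo $n-1$. I would therefore first derive an explicit combinatorial expression for $(n-1)!\,c_{n,n}$, as a sum over sequences or forests of the type produced by the Lagrange/Abel expansion of $z\e^{-z}$. Only then would I reduce modulo $n-1$, checking that each non-surviving term carries a full factor $n-1$ and not merely a prime factor of it.
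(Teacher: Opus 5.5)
Your base cases and your final norm step are fine, and the shape you anticipate, $(1-\lambda)^{-(n-1)}$ times a unit, is the right one. The proposal has a genuine gap at the central congruence, though, and two claims you lean on are false.

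First, $(n-1)!\,c_{n,n}$ is \emph{not} an integer polynomial. For $n=2$, expanding $e^{-z}\exp(-\lambda z e^{-z})$ gives
\[c_{2,2}=\tfrac{\lambda^2+4\lambda+1}{2}\notin\Z[\lambda],\qquad\text{although}\qquad c_{2,2}\equiv -1 \pmod{\lambda+1}.\]
So the ``extra factor of $n$'' cannot be absorbed at the polynomial level. Integrality of $-(n-1)!\,c_n$ holds only modulo $\Phi_n$, and no identity on the $n$-fold iterates $A_n=\exp\bigl(-\sum_m\lambda^m zA_m\bigr)$ will produce it. You need an object that sees $\Phi_n$.

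Second, your reduction mechanism fails when $n-1$ is composite. The claim that multinomial coefficients $(n-1)!/\prod j_i!$ vanish modulo $n-1$ except for extremal partitions is false; for instance $\binom{4}{2}=6\not\equiv 0\pmod 4$. So even an explicit forest expansion of the iterates would not give the congruence. As written, the middle step is a hope, not an argument.

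The missing idea is to replace $c_n$ by a coefficient of the logarithm of the linearizer. Write $G_\lambda(z)=z\exp H_\lambda(z)$ with $G_\lambda(\lambda z)=F_\lambda\circ G_\lambda(z)$, $H_\lambda=\sum_{k\geq1} h_kz^k$, and set $b_k:=(k-1)!\,\pi_k h_k$.

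\textbf{Link to $c_n$.} From $a_n+c_n\equiv0\pmod{\Phi_n}$ and $h_n-g_n\in\Q[h_1,\ldots,h_{n-1}]$, you get $b_n\equiv-(n-1)!\,c_n\pmod{\Phi_n}$. This is where integrality modulo $\Phi_n$ comes from.

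\textbf{One-step equation.} The conjugacy becomes $H_\lambda(\lambda z)-H_\lambda(z)=-z\exp H_\lambda(z)$, a single-step equation instead of an $n$-fold iteration. Expanding the exponential gives
\[b_{k+1}=\sum_{\sum_j jr_j=k}\frac{k!}{\prod_j r_j!\,\bigl((j-1)!\bigr)^{r_j}}\cdot\frac{\pi_k}{\prod_j\pi_j^{r_j}}\cdot\prod_j b_j^{r_j}.\]

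\textbf{Divisibility.} These coefficients are not generic multinomials. With $r:=r_2+r_3+\cdots$, each one equals $\frac{k!}{(k-r)!}$ times an integer, namely a count of set partitions with unlabeled equal-size bins. Hence it is divisible by $k$, composite or not, as soon as $r\geq1$.

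\textbf{Conclusion.} Only the term $r_1=k$ survives, so $b_n\equiv \pi_{n-1}/\pi_1^{n-1}\pmod{n-1}$. This is exactly your anticipated shape, with the unit $\pi_{n-1}(\omega)=n\equiv 1$. Your norm computation then finishes the proof: $\resultant(\Phi_n,\pi_{n-1})=n^{\varphi(n)}\equiv1$ and $\resultant(\Phi_n,\pi_1)=\Phi_n(1)$, so $\Phi_n(1)^{n-1}\beta_n\equiv1\pmod{n-1}$.
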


In the case of the quadratic polynomial $\omega z(1-z)$, Yoccoz \cite{yoccoz} claims to have an arithmetic proof that when $\omega$ is a root of unity, the parabolic multiplicity at $0$ is equal to $1$. However, he makes an assertion that we do not know how to prove (see the discussion in \S\ref{sec:valueroots}). In this paper, we present a proof which is valid for  unicritical polynomials of arbitrary degree. 

The proof proceeds as follows. Let $d\geq 2$ be an integer and consider the family of unicritical polynomials 
\[F_\lambda(z) := \frac\lambda{d^2} \bigl(1- (1-dz)^d\bigr).\]
The polynomial $F_\lambda$ fixes $0$ with derivative $\lambda$. 
It has degree $d$, integer coefficients and a unique critical point at $z = \frac{1}{d}$. 
For $d = 2$, we have that $F_\lambda(z) = \lambda z (1-z)$.

As previously, given $n\in \N^\ast$, we may write 
\[F_\lambda^{\circ n}(z)  = \lambda^n z \sum_{k\geq  0} c_{n,k}(\lambda) z^k \quad \text{with}\quad c_{n,k}(\lambda)\in \Z[\lambda].\]
We need to show that the polynomial $c_{n,n}$ does not vanish on $\Omega_n$. The cases 
\begin{itemize}
\item $n=1$, 
\item $n=2$, and 
\item $n=6$ and $d=2$
\end{itemize}
can be handled by direct computation. By the Bang-Zsigmondy Theorem \cite{ba,z}, in all other cases, there exists a prime number $p$ such that the order of $d$ in $(\Z/p\Z)^\times$ is exactly $n$. We shall work in the $p$-adic completion $\bigl(\Qp,|\cdot|_p\bigr)$ of $\Q$ for such a prime number $p$. Since $\Phi_n\in \Z[\lambda]$ is monic and irreducible over $\Q$ and since $c_{n,n}\in \Z[\lambda]$, if $c_{n,n}$ and $\Phi_n$ have a common zero in $\C$, then $\Phi_n$ divides $c_{n,n}$ in $\Z[\lambda]$, whence every root of $\Phi_n$ in $\Qp$ is also a root of $c_{n,n}$. The fact that $c_{n,n}$ and $\Phi_n$ have no common root in $\C$ is therefore a consequence of the following proposition. 

\begin{prop}\label{prop:padic}
Assume that $d\geq 2$ and  $n\geq 3$ are integers. Assume further that $p\geq 2$ is a prime number such that the order of $d$ in $(\Z/p\Z)^\times$ is exactly $n$. Then, the cyclotomic polynomial $\Phi_n$ has a unique root $\omega\in \Qp$ such that $|\omega-d|_p<1$. If 
\[F(z) =\frac{\omega}{d^2} \bigl(1- (1-dz)^d\bigr)\]
then 
\[F^{\circ n}(z) = z \bigl(1 + cz^n + \O(z^{n+1}) \bigr) \quad \text{with}\quad |c|_p\geq |d^n-1|_p\neq 0.\]
\end{prop}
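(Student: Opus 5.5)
The plan is to compare $F=F_\omega$ with the map at $\lambda=d$, which is explicitly computable. That map is conjugate to a power map. Set $G:=F_d$, so $G(z)=\frac1d\bigl(1-(1-dz)^d\bigr)$. In the coordinate $w=1-dz$ we get $1-dG(z)=(1-dz)^d$, so $G$ is conjugate to $w\mapsto w^d$. Hence
\[G^{\circ n}(z)=\frac1d\bigl(1-(1-dz)^{d^n}\bigr)=\sum_{k\geq 0}(-1)^k\binom{d^n}{k+1}d^{k}z^{k+1},\]
and therefore $c_{n,k}(d)=(-1)^k\binom{d^n}{k+1}d^{k-n}$.

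\textbf{Step 1: the root $\omega$.} Since $d$ has order $n$ modulo $p$, we have $n\mid p-1$. So $X^n-1$ is separable modulo $p$ and splits into distinct linear factors over $\F_p$. Its factor $\Phi_n$ has $d$ as a simple root modulo $p$, and no other root of $\Phi_n$ is congruent to $d$. Hensel's lemma then gives a unique root $\omega\in\Z_p$ of $\Phi_n$ with $|\omega-d|_p<1$. Moreover $X^n-1=\prod_{\zeta^n=1}(X-\zeta)$ in $\Z_p[X]$. For $\zeta\neq\omega$ the factor $d-\zeta$ is a unit, so
\[|d^n-1|_p=|d-\omega|_p.\]
This identity is the bridge between the two valuations in the statement.

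\textbf{Step 2: valuations at $\lambda=d$.} Since $p>n$ and $p\nmid d$, the factors $d^n-j$ for $2\leq j\leq n$ are units, because $d^n-j\equiv 1-j \pmod p$. This gives, for $1\leq k\leq n-1$,
\[|c_{n,k}(d)|_p=|d^n-1|_p,\qquad |c_{n,n}(d)|_p=|d^n-1|_p\cdot|(n+1)!|_p^{-1}\geq|d^n-1|_p.\]
The last factor exceeds $1$ only when $p=n+1$. On the other hand, $F^{\circ n}$ commutes with $F$ and $F'(0)=\omega$ is a primitive $n$-th root of unity. So the first nonlinear term of $F^{\circ n}$ has the form $z^{jn+1}$, and $c_{n,k}(\omega)=0$ for $1\leq k\leq n-1$.

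\textbf{Step 3: Taylor expansion in $\lambda$ (main obstacle).} Since $c_{n,n}\in\Z[\lambda]$ and $\omega-d\in p\Z_p$, we can expand
\[c=c_{n,n}(\omega)=c_{n,n}(d)+(\omega-d)\,c_{n,n}'(d)+\O\bigl((\omega-d)^2\bigr).\]
When $p=n+1$ the zeroth-order term strictly dominates, and we are done by the ultrametric inequality. In general, however, both of the first two terms have absolute value about $|d^n-1|_p$, and they could cancel. So I must compute $c_{n,n}'(d)$ modulo $p$.

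For this I will differentiate the iterate in $\lambda$, using $\partial_\lambda F_\lambda=F_\lambda/\lambda$:
\[\partial_\lambda F_\lambda^{\circ n}=\frac1\lambda\sum_{i=0}^{n-1}(F_\lambda^{\circ(n-1-i)})'\bigl(F_\lambda^{\circ(i+1)}\bigr)\cdot F_\lambda^{\circ(i+1)}.\]
At $\lambda=d$ every term is explicit in the $w$-coordinate, being built from the power maps $w^{d^j}$. This gives a closed form for $\partial_\lambda\bigl(\lambda^nc_{n,k}(\lambda)\bigr)|_{\lambda=d}$ as a sum of products of binomial coefficients.

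I would then reduce modulo $p$, using $d^n\equiv1$ and the fact that $\binom{d^j}{m}$ for $m<p$ is congruent to $\binom{d^j \bmod p}{m}$. The aim is to show that $(\omega-d)c'_{n,n}(d)/(d^n-1)$ is not congruent to $-c_{n,n}(d)/(d^n-1)$ modulo $p$.

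A useful cross-check, which may also supply the needed cancellation pattern, is to apply the same first-order expansion to $c_{n,k}$ for $k<n$. There $c_{n,k}(\omega)=0$ forces
\[c_{n,k}(d)\equiv(d-\omega)\,c_{n,k}'(d)\pmod{(d^n-1)p}.\]
This pins down $(d-\omega)/(d^n-1)$ modulo $p$, since $d^n-1=(d-\omega)\,\Phi$-cofactor. I can then substitute this value into the $k=n$ expansion and reduce the claim to a nonvanishing of an explicit integer modulo $p$. I expect this final congruence to be the genuinely delicate point of the proof.
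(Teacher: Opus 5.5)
\textbf{There is a genuine gap: the decisive non-cancellation step is not carried out.} Your Steps 1 and 2 are correct. Expanding $c_{n,n}$ to first order at $\lambda=d$, where $F_d$ is conjugate to $w\mapsto w^d$, is a viable route, and the case $p=n+1$ is indeed settled by the ultrametric inequality. But for $p>n+1$ the proposal stops exactly where the proof has to happen. There both $c_{n,n}(d)$ and $(\omega-d)\,c_{n,n}'(d)$ may have norm $|d^n-1|_p$, and the whole content of the proposition is that they do not cancel modulo $(d^n-1)p$. You never compute $c_{n,n}'(d)\bmod p$; you only state the required non-congruence as an aim you ``expect to be delicate''. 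So the lower bound $|c|_p\geq|d^n-1|_p$ is not established.

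Your ``cross-check'' is also unnecessary. The ratio $(\omega-d)/(d^n-1)$ modulo $p$ follows directly from your own Step 1: $\prod_{\zeta\neq\omega}(d-\zeta)\equiv n\omega^{n-1}\equiv n/d$, so $(\omega-d)/(d^n-1)\equiv -d/n \pmod p$. Extracting it from the $c_{n,k}$ with $k<n$ would moreover require $c_{n,k}'(d)\not\equiv 0\pmod p$, which you have not shown.

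\textbf{How to close the gap along your route.} In the $w$-coordinate, your derivative formula gives
\[\partial_\lambda F_\lambda^{\circ n}\big|_{\lambda=d}=\frac1d\sum_{i=0}^{n-1}d^{\,n-2-i}\bigl(w^{d^n-d^{i+1}}-w^{d^n}\bigr).\]
Extract the coefficient of $z^{n+1}$ and reduce modulo $p$, using $p>n+1$, $d^n\equiv 1$ and $c_{n,n}(d)\equiv 0$. This yields
\[c_{n,n}'(d)\equiv\frac{1}{d\,(n+1)!}\sum_{j=1}^{n-1}Q(d^j)\pmod p,\qquad Q(y):=(y-1)(y+1)(y+2)\cdots(y+n-1).\]
The residues $d^0,\dots,d^{n-1}$ are the $n$ distinct $n$-th roots of unity in $\F_p$. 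Summing over them kills every monomial of $Q$ except $y^0$ and $y^n$, and $Q(1)=0$. Hence the sum equals $n\bigl(1-(n-1)!\bigr)$. Combine this with $c_{n,n}(d)/(d^n-1)\equiv-\frac{1}{n(n+1)}$ and $(\omega-d)/(d^n-1)\equiv-\frac dn$. You get
\[c/(d^n-1)\equiv-\frac{1}{(n+1)!}\not\equiv0\pmod p,\]
so $|c|_p=|d^n-1|_p$.

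\textbf{How the paper avoids this computation.} It works with $a_n=\pi_n g_n$ instead of $c_{n,n}$; the two agree up to sign at roots of $\Phi_n$ by Lemma~\ref{lem:aqcq}. For $a_n$, the zeroth- and first-order Taylor terms combine identically into $-\pi_n(d)^2g_n'(d)/\pi_n'(d)$. The paper then bounds $g_n'(d)$ by perturbing the explicit linearizer $G_d(z)=(1-\mathrm{e}^{-dz})/d$, where the only small divisor is $d^n-1$, in $\eta_n=\kappa_n/(d^n-1)$. Either way, this non-cancellation argument is the heart of the proposition and must be supplied.
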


Finally, in the case of quadratic polynomials, we will complement this result with observations, conjectures, and a few other results. Let $\varphi$ is the Euler totient function and $v_p$ be the $p$-adic valuation on $\Z$. 

\begin{prop}
Assume $F_\lambda(z) = \lambda z(1-z)$, $n\in\N^\ast$,
\[F_\lambda^{\circ n}(z)  = \lambda^n z \sum_{k\geq  0} c_{n,k}(\lambda) z^k\quad \text{and}\quad 
\alpha_n:= \resultant(\Phi_n,-c_{n,n}).\]
\begin{itemize}
\item If $n = 2^s-1$ for some integer $s\geq 1$, then $\alpha_n$ is an odd integer. 
\item If $n=2(2^s-1)$ for some integer $s\geq 1$, then $v_2(\alpha_n) = s\varphi(n)$.
\item If $n = 2^r+2^s-1$ for some integers $r>s\geq 1$, then $v_2(\alpha_n)=\varphi(n)$. 
\item For every integer $n\geq 1$,  $n$ divides $\alpha_n$.
\item If $n$ is a prime number, then $\alpha_n \equiv n \pmod{n^2}$. 
\end{itemize}
\end{prop}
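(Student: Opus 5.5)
The plan is to work modulo small primes with the ring of integers $\Z[\omega]$, $\omega\in\Omega_n$. Since $c_{n,n}\in\Z[\lambda]$, we have
\[
\alpha_n=\prod_{\omega\in\Omega_n}\bigl(-c_{n,n}(\omega)\bigr)=\mathrm N_{\Q(\omega)/\Q}\bigl(-c_{n,n}(\omega)\bigr).
\]
Hence a prime power divides $\alpha_n$ exactly according to the valuations of $c_{n,n}(\omega)$ at the primes of $\Z[\omega]$ above it. Each bullet therefore reduces to understanding $c_{n,n}$ modulo a suitable prime ideal.

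For the three $2$-adic statements, the key observation is that modulo $2$ we have $F_\lambda(z)=\lambda z-\lambda z^2\equiv\lambda(z+z^2)$. This is an $\F_2$-linearized (additive) polynomial in $z$. Every iterate $F_\lambda^{\circ n}$ is then, modulo $2$, a linearized polynomial $\sum_j a_{n,j}(\lambda)z^{2^j}$. I will encode composition in the twisted polynomial ring $\F_2[\lambda]\{\tau\}$, with $\tau\lambda=\lambda^2\tau$. Then $F^{\circ n}_\lambda\bmod 2$ corresponds to the product of $n$ copies of $\lambda(1+\tau)$, and $a_{n,j}$ is an explicit sum over $j$-subsets of $\{1,\dots,n\}$ of monomials in $\lambda$. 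In particular $c_{n,n}\equiv 0\pmod 2$ unless $n+1$ is a power of $2$.

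For $n=2^s-1$, the order of $2$ modulo $n$ is $s$, so the reduction of $\omega$ lies in $\F_{2^s}$ and $\omega^{2^s}=\omega$. I then have to show that $a_{n,s}(\bar\omega)\neq0$ for every primitive $n$-th root $\bar\omega\in\F_{2^s}$; this gives $\alpha_n$ odd. This is the step I expect to be the main obstacle. I would first try to rewrite the subset sum using $\omega^{2^j}$, with exponents reduced modulo $2^s-1$. The aim is to recognise it as a Frobenius-twisted determinant, or a Moore-type determinant, which is nonzero by linear independence.

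The cases $n=2(2^s-1)$ and $n=2^r+2^s-1$ need finer information. Here $c_{n,n}\equiv0\pmod 2$, and I would compute $c_{n,n}$ modulo $4$, or modulo $2^{s+1}$, by lifting. I would write $F_\lambda=\lambda(z+z^2)-2\lambda z^2$ and expand $F^{\circ n}_\lambda$ to first order in the perturbation $-2\lambda z^2$. The first-order term is a sum over positions of derivatives of linearized compositions, and these are again controlled by the twisted product. For $n=2^r+2^s-1$ this should show that $c_{n,n}(\omega)/2$ is a unit above $2$; taking norms gives $v_2(\alpha_n)=\varphi(n)$. For $n=2(2^s-1)$ I expect to need an induction on $s$, giving valuation exactly $s$ at each prime above $2$.

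For $n\mid\alpha_n$, take a prime power $q^e\,\|\,n$ and a prime $\mathfrak q$ of $\Z[\omega]$ above $q$. Modulo $\mathfrak q$, $\omega$ is congruent to a primitive root of order $m=n/q^e$. One then compares $F_\omega^{\circ n}$ with the $q^e$-th iterate of $G=F_\omega^{\circ m}$, which is tangent to a map of multiplier congruent to $1$. For the prime case $n=\ell$, set $\pi=1-\omega$, so that $\omega\equiv1\pmod\pi$ and $\alpha_n\equiv c_{n,n}(1)^{\ell-1}\cdot(\text{correction})$ modulo $\pi^2$. I would compute $c_{n,n}(1)$ and $c_{n,n}'(1)$ from the iterates of $z-z^2$. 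Their coefficients are polynomials in $n$ vanishing at $n=0$, so they carry a factor $n$. A Taylor expansion of $c_{n,n}(\omega)$ in $\pi$ to second order, followed by taking the norm, should give $\alpha_\ell\equiv\ell\pmod{\ell^2}$. Controlling the denominators of these $n$-polynomials is the second delicate point.
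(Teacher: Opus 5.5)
What you have is a strategy rather than a proof. For each of the five items the decisive step is left open, and for the second, third and fourth items the route you sketch would not deliver the claim as stated.

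\textbf{The case $n=2^s-1$.} Your reduction to $c_{n,n}(\bar\omega)\neq0$ in $\F_{2^s}$ is correct. It can be finished inside your framework without any Moore determinant. With $T=\bar\omega(1+\tau)$ one has $\bar\omega T\bar\omega^{-1}=\bar\omega+\tau$, and this conjugation does not change the $\tau^s$-coefficient of the $n$-th power because $\bar\omega^{2^s-1}=1$. That coefficient is the complete homogeneous symmetric polynomial $h_{n-s}(x_0,x_0,x_1,\dots,x_{s-1})$ with $x_i=\bar\omega^{2^i}$. Since $\prod_{y\in\F_{2^s}^\times}(1-yt)=1-t^n$, it equals $\prod_y(x_0-y)$, the product over the $y\in\F_{2^s}^\times$ not conjugate to $\bar\omega$, which is nonzero. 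This is a genuinely different route from the paper's. The paper works with the linearizer coefficients $a_n=\pi_ng_n$, which satisfy $a_n\equiv-c_{n,n}\pmod{\Phi_n}$ (Lemma~\ref{lem:aqcq}) and Yoccoz's recursion $a_{k+1}=\sum_j\gauss{k}{j}a_ja_{k-j}$. Modulo $2$ this gives $a_{2^s-1}\equiv\prod_{k<s}\gauss{2^{k+1}-2}{2^k-1}^{2^{s-k-1}}$, a product of $\Phi_m$ with $m<n$, whose resultants with $\Phi_n$ are odd by Theorem~\ref{theo:apostol}.

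\textbf{The cases $n=2(2^s-1)$ and $n=2^r+2^s-1$.} Here your plan breaks down, for two reasons. First, the iterates of $\lambda(z+z^2)$ are additive only modulo $2$; already $\lambda(z+z^2)$ composed with itself contains $2\lambda^3z^3$. So modulo $4$ the unperturbed part produces cross terms that the twisted ring does not record: if $L=\sum_i\ell_iz^{2^i}$, then $L^2$ contains $2\ell_i\ell_jz^{2^i+2^j}$. Second, an exact valuation $s$ at the primes above $2$ requires $c_{n,n}(\omega)$ modulo $2^{s+1}$, which a first-order expansion in $-2\lambda z^2$ cannot give once $s\geq2$. The paper obtains the factor $2^{\nu_n}$ from the $2$-adic content $v_2(a_n)=\sigma_2(n+1)-1$ (Catalan numbers at $\lambda=0$ and Kummer's carries). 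It then shows that $\hat a_n$ is a unit above $2$ in one of two ways:
\begin{itemize}
\item for $n=2^r+2^s-1$, from the recursion: only the split $j+1=2^s$ survives, so $\hat a_n\equiv\gauss{n-1}{2^s-1}\hat a_{2^s-1}\hat a_{2^r-1}\pmod2$;
\item for $n=2q$ with $q=2^s-1$, from Yoccoz's evaluation $a_{2q}(\zeta)=(q+1)a_q(\zeta)^2$ on $\Omega_q$ (Proposition~\ref{prop:yoccoz}) together with $\Phi_{2q}\equiv\Phi_q\pmod2$, which reduces everything to the case $n=q$.
\end{itemize}

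\textbf{The claim $n\mid\alpha_n$.} The comparison of $F_\omega^{\circ n}$ with $G^{\circ q^e}$ that you describe takes place modulo $\mathfrak q$. At best it yields $\mathfrak q\mid c_{n,n}(\omega)$ for every $\mathfrak q\mid q$, hence only $v_q(\alpha_n)\geq\varphi(n/q^e)$. This is less than $e$ when, say, $n=q^e$ with $e\geq2$, where one needs $c_{n,n}(\omega)\in(1-\omega)^e$. The missing idea is the dynatomic factorization $\lambda^nC_n-1=\prod_{d\mid n}D_d$ over $\Z[\lambda]$ (Theorem~\ref{theo:mp}). Since $D_d(\lambda,0)=\Phi_d$, the factor $D_n$ must carry all the vanishing at $\omega\in\Omega_n$. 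So $\Phi_n$ divides the first $n$ coefficients of $D_n$, and $\lambda^nc_{n,n}\equiv d_{n,n}\Psi_n\pmod{\Phi_n}$ with $\Psi_n=\prod_{d\mid n,\,d<n}\Phi_d$. Thus $\resultant(\Phi_n,\Psi_n)$ divides $\alpha_n$, and Apostol's formula gives $v_p\bigl(\resultant(\Phi_n,\Psi_n)\bigr)=\varphi(n)/(p-1)\geq v_p(n)$.

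\textbf{The case $n=\ell\geq3$ prime.} Your closing Taylor-and-norm step is essentially the paper's last lemma. In fact one only needs $\ell\mid c_{\ell,\ell}(1)$ and $\ell\nmid c_{\ell,\ell}'(1)$. Then $-c_{\ell,\ell}(\omega)=(1-\omega)v$ with $v\equiv c_{\ell,\ell}'(1)$ modulo $1-\omega$, and $\mathrm{N}(v)\equiv c_{\ell,\ell}'(1)^{\ell-1}\equiv1\pmod\ell$ by Fermat. Your congruence involving $c_{n,n}(1)^{\ell-1}$ is not the right shape, since $c_{\ell,\ell}(1)\in\ell\Z$. More importantly, those two congruences are the entire content, and you do not prove them. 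As you suspect, ``a polynomial in $N$ vanishing at $N=0$'' gives nothing modulo $\ell$ at $N=\ell$: for instance $\sum_{j<N}j^{\ell-1}$ vanishes at $N=0$ but is $\equiv-1\pmod\ell$ at $N=\ell$. The paper computes on $a_\ell$ instead. It gets $a_\ell(1)=\ell!$ from the recursion. A perturbative computation of the linearizer near $\lambda=1$ (Lemma~\ref{lem:akprime1}) then gives $a_\ell'(1)=\ell!\bigl(\frac{\ell(\ell-1)}{4}-\sum_{j=0}^{\ell-2}\frac{j+1}{\ell-j}\bigr)\equiv-(\ell-1)!\equiv1\pmod\ell$ by Wilson's theorem.
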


\section{Preliminaries}

Before embarking into the study of the families $\lambda z (1-z)$ and $\lambda z \e^{-z}$, let us first recall some known facts. 

\subsection{Rising factorials\label{sec:factorials}}

Given a real number $x\in \R$ and an integer $n\geq 0$, we consider the rising factorial
\[x^{(n)} :=\prod_{j=0}^{n-1} (x+j) = x(x+1)\cdots (x+n-1).\]
By convention, $x^{(0)} = 1$ since an empty product is equal to $1$. 
The following Vandermonde-type identity for rising factorials is well known (see for example \cite{gkp}). 

\begin{lemma}\label{lem:factorials}
For every integer $m\in \N$ and every $x,y\in \R$, 
\[\sum_{j=0}^{m} \binom{m}{j} x^{(j)}y^{(m-j)} =(x+y)^{(m)}. \]
\end{lemma}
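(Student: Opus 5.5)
The statement to prove is Lemma~\ref{lem:factorials}, the Vandermonde identity for rising factorials. The cleanest route is to treat both sides as polynomials in $(x,y)$ and prove the identity when $x,y$ are positive integers. A polynomial identity in two variables that holds on $\N^\ast\times\N^\ast$ holds identically, since a nonzero polynomial cannot vanish on an infinite grid. In particular it then holds for all $x,y\in\R$.

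For positive integers, I use the combinatorial interpretation $x^{(j)}/j! = \binom{x+j-1}{j}$, the number of multisets of size $j$ drawn from $x$ types. Dividing the claimed identity by $m!$, it becomes
\[
\sum_{j=0}^{m}\binom{x+j-1}{j}\binom{y+m-j-1}{m-j}=\binom{x+y+m-1}{m}.
\]
The right-hand side counts multisets of size $m$ from $x+y$ types. Split the types into a block of $x$ and a block of $y$, and let $j$ be the number of elements drawn from the first block. This gives the left-hand side.

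As an alternative, I could avoid combinatorics and argue by generating functions. For $|t|<1$ and real $x$, the binomial series gives $(1-t)^{-x}=\sum_{j\geq 0} x^{(j)}\,t^j/j!$. Multiplying the series for $x$ and for $y$ and comparing the coefficients of $t^m$ in $(1-t)^{-x}(1-t)^{-y}=(1-t)^{-(x+y)}$ yields the identity directly, for all real $x,y$ at once.

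A third option is induction on $m$, using $(x+y)^{(m+1)}=(x+y)^{(m)}(x+y+m)$ together with the splitting $x+y+m=(x+j)+(y+m-j)$ inside each term. This reduces to Pascal's rule after reindexing.

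None of these steps is a real obstacle. The only point needing care is justifying the passage from integer to real arguments (or checking the binomial series expansion), and polynomial identity on an infinite grid settles that.
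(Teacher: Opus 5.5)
Your proposal is correct, and your third option is exactly the paper's proof. The paper inducts on $m$, splits $\binom{m+1}{j}=\binom{m}{j}+\binom{m}{j-1}$ by Pascal's rule, absorbs the extra factors via $y^{(m+1-j)}=y^{(m-j)}(y+m-j)$ and $x^{(k+1)}=x^{(k)}(x+k)$, and recombines the two sums into $(x+y+m)S_m$.

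Your main route is genuinely different. You prove the divided identity $\sum_{j}\binom{x+j-1}{j}\binom{y+m-j-1}{m-j}=\binom{x+y+m-1}{m}$ by counting size-$m$ multisets from $x+y$ types split into two blocks. You then pass from $\N^\ast\times\N^\ast$ to $\R^2$, since a polynomial in two variables that vanishes on an infinite grid is zero. This explains the identity conceptually: it is Vandermonde's convolution for ``multichoose'' numbers. The cost is the extension step, which you handle correctly. Your generating-function variant treats all real $x,y$ at once, but it relies on the binomial series $(1-t)^{-x}=\sum_j x^{(j)}t^j/j!$ and on multiplying convergent power series.

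The paper's induction, by contrast, is a purely formal computation in $\Z[x,y]$. It therefore holds over any commutative ring, with no density or convergence argument. For the paper's only application, the proof of Proposition~\ref{prop:yoccoz} with rational arguments $x=(r+1)/q$ and $y=(s-r+1)/q$, any of your three arguments suffices.
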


\begin{proof}
Fix $x\in \R$ and $y\in \R$ and for $m\in \N$, set
\[S_m := \sum_{j=0}^{m}\binom{m}{j}  x^{(j)}y^{(m-j)}. \]
We prove by induction on $m\in \N$ that
\[S_m=(x+y)^{(m)}.\]
For $m=0$, both sides are equal to $1$.
Assume that the identity holds for some $m\geq0$. Using the Pascal  Identity
\[\binom{m+1}{j}=\binom{m}{j}+\binom{m}{j-1},\]
we obtain 
\[S_{m+1} = \sum_{j=0}^{m+1} \binom{m}{j} x^{(j)} y^{(m+1-j)} + \sum_{j=0}^{m+1} \binom{m}{j-1} x^{(j)} y^{(m+1-j)}.\]
In the first sum, the term $j=m+1$ vanishes since $\ds \binom{m}{m+1}=0$. 
In the second sum, the term $j=0$ vanishes since $\ds \binom{m}{-1}=0$.
Hence, setting $k = j-1$, 
\[S_{m+1} =  \sum_{j=0}^{m} \binom{m}{j} x^{(j)} y^{(m-j)} (y+m-j) + \sum_{k=0}^{m}
\binom{m}{k} x^{(k)} y^{(m-k)}(x+k) .\]
Renaming $k$ as $j$ and combining the two sums gives
\[S_{m+1} =  \sum_{j=0}^{m} \binom{m}{j} x^{(j)} y^{(m-j)} (x+y+m) = (x+y+m) S_m = (x+y)^{(m+1)}\]
by the induction hypothesis. 
\end{proof}

\subsection{Cyclotomic polynomials}

The discriminant of the polynomial $\lambda^n-1\in \Z[\lambda]$ is $\pm n^n$. This polynomial therefore has no square factor. Since for all $d$ dividing $n$, $\lambda^d-1$ divides $\lambda^n-1$ in $\Z[\lambda]$, and since those polynomials are monic, there exists a sequence $(\Phi_n)_{n\in \N^\ast}$ of monic polynomials in $\Z[\lambda]$ such that for all $n\in \N^\ast$, 
\[\lambda^n-1 = \prod_{d|n} \Phi_d(\lambda).\]
The polynomial $\Phi_n$ is the $n$-th cyclotomic polynomial. Its degree is $\varphi(n)$, where $\varphi:\N^\ast\to\N^\ast$ is the Euler totient function. 

Let $\Omega_n\subset \Qbar$ be the set of primitive $n$-th roots of unity. Then,
\[\Phi_n(\lambda) = \prod_{\omega\in \Omega_n} (\lambda - \omega).\] 

Let $\mu:\N^\ast\to  \{-1, 0, 1\}$ be the Möbius function defined by:
\begin{itemize}
    \item $\mu(1) = 1$,
    \item if $n$ is a square-free positive integer with $j$ distinct prime factors, then $\mu(n) = (-1)^j$,
    \item if $n$ has a squared prime factor, then $\mu(n) = 0$.
\end{itemize}
Then, following equality holds in $\Q(\lambda)$: 
\[\Phi_n(\lambda) = \prod_{d|n} (\lambda^d-1)^{\mu(n/d)}.\]

The irreducibility of $\Phi_n$ over $\Q$ was established by Gauss for $n$ prime, and extended to all $n\geq 1$ by Kronecker (see~\cite{lang} for a modern treatment).

\begin{theo}
For every $n\geq 1$, the cyclotomic polynomial $\Phi_n$ is irreducible over $\Q$. 
\end{theo}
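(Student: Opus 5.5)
This is the classical irreducibility of cyclotomic polynomials, so I will follow Dedekind's argument via reduction modulo primes. Let $f\in\Z[\lambda]$ be the minimal polynomial over $\Q$ of a fixed primitive root $\omega\in\Omega_n$. Since $\Phi_n$ is monic with integer coefficients and $\omega$ is a root of it, Gauss's lemma shows that we may take $f$ monic in $\Z[\lambda]$ with $\Phi_n = f g$ for some monic $g\in\Z[\lambda]$. Every element of $\Omega_n$ has the form $\omega^k$ with $\gcd(k,n)=1$, and such a $k$ is a product of primes not dividing $n$. It therefore suffices to prove the following claim: if $\zeta$ is a root of $f$ and $p$ is a prime not dividing $n$, then $\zeta^p$ is also a root of $f$. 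Iterating the claim shows that every element of $\Omega_n$ is a root of $f$, so $f=\Phi_n$ and $\Phi_n$ is irreducible.

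To prove the claim, suppose that $\zeta^p$ is not a root of $f$. Since $\zeta^p\in\Omega_n$, it is then a root of $g$. Hence $\zeta$ is a root of $g(\lambda^p)$, and therefore $f$ divides $g(\lambda^p)$ in $\Q[\lambda]$. Because $f$ is monic, Gauss's lemma again gives $g(\lambda^p)=f(\lambda)h(\lambda)$ with $h\in\Z[\lambda]$.

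Now reduce modulo $p$, writing bars for images in $\F_p[\lambda]$. The Frobenius identity gives $\bar g(\lambda^p)=\bar g(\lambda)^p$, so $\bar f$ divides $\bar g^{\,p}$. Consequently any irreducible factor $\pi$ of $\bar f$ in $\F_p[\lambda]$ also divides $\bar g$. It follows that $\pi^2$ divides $\bar f\bar g=\bar\Phi_n$, which in turn divides $\lambda^n-1$ in $\F_p[\lambda]$. This contradicts the squarefreeness of $\lambda^n-1$ over $\F_p$. Indeed, its derivative $n\lambda^{n-1}$ is nonzero modulo $p$ because $p\nmid n$, so it is coprime to $\lambda^n-1$. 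Equivalently, the discriminant $\pm n^n$ recalled above is a unit modulo $p$. Note that $\bar f$ has positive degree because $f$ is monic, so $\pi$ exists.

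I expect the only delicate step to be the bookkeeping with Gauss's lemma, that is, making sure $f$, $g$ and $h$ all lie in $\Z[\lambda]$ so that reduction modulo $p$ makes sense. This follows because $f$ and $g$ are monic factors of a monic integer polynomial, and monic division preserves integrality.
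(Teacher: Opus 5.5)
Your argument is correct, and it is essentially the proof the paper has in mind: the paper does not prove this theorem itself but quotes it (attributing it to Gauss and Kronecker) and refers to Lang, whose treatment is exactly this Dedekind-style reduction modulo primes $p\nmid n$. The only step you leave implicit is that $f$, being irreducible with $\zeta$ as a root, is also the minimal polynomial of $\zeta$, which is what gives $f\mid g(\lambda^p)$; the rest, including keeping $f$, $g$, $h$ in $\Z[\lambda]$ and the squarefreeness of $\lambda^n-1$ over $\F_p$, is handled correctly.
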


The following theorem, established by Apostol~\cite{apostol}, is now a classical tool in the study of cyclotomic fields and has since been used and rediscovered in a variety of contexts.

\begin{theo}\label{theo:apostol}
 For all $m,n\in\N^\ast$ with $m<n$, we have
\[\resultant(\Phi_m,\Phi_n) = 
\begin{cases} 
p^{\varphi(m)}&\text{if }n = mp^r\text { with } p\geq 2 \text{ prime and }r\in\N^\ast, \\ 
1& \text{otherwise.}
\end{cases}\]
\end{theo}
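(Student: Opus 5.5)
The plan is to compute the resultant as a norm of values of $\Phi_m$ at primitive $n$-th roots of unity, expand $\Phi_m$ with the Möbius formula, and reduce everything to the numbers $\Phi_k(1)$. Since $\Phi_m$ and $\Phi_n$ are monic,
\[\resultant(\Phi_m,\Phi_n)=\pm\prod_{\xi\in\Omega_n}\Phi_m(\xi).\]
I will track only the absolute value. The sign is settled separately: for $m,n\ge 3$ both degrees are even, so the order of the arguments and the sign convention do not matter, and the product is a positive norm. The cases $m\in\{1,2\}$ are checked by hand, using $\Phi_n(1)$ and $\Phi_n(-1)$.

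\textbf{Step 1: reduce to the numbers $\Phi_k(1)$.}
The key observation is that for $\xi\in\Omega_n$ and any $d\mid m$ we have $d\le m<n$, hence $\xi^d\neq 1$. So the identity
\[\Phi_m(\lambda)=\prod_{d\mid m}(\lambda^d-1)^{\mu(m/d)}\]
can be evaluated at $\xi$ with no vanishing factor. This gives
\[\Bigl|\prod_{\xi\in\Omega_n}\Phi_m(\xi)\Bigr|=\prod_{d\mid m}\Bigl|\prod_{\xi\in\Omega_n}(\xi^d-1)\Bigr|^{\mu(m/d)}.\]
Set $g=\gcd(n,d)$. As $\xi$ ranges over $\Omega_n$, the power $\xi^d$ ranges over $\Omega_{n/g}$, each value taken $\varphi(n)/\varphi(n/g)$ times. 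Moreover $n/g>1$, and
\[\prod_{\eta\in\Omega_k}(1-\eta)=\Phi_k(1).\]
From $\lambda^k-1=\prod_{e\mid k}\Phi_e(\lambda)$, induction gives $\Phi_k(1)=p$ if $k=p^s$ with $s\ge1$, and $\Phi_k(1)=1$ if $k>1$ is not a prime power. Hence
\[|\resultant(\Phi_m,\Phi_n)|=\prod_{d\mid m}\Phi_{n/g}(1)^{\mu(m/d)\varphi(n)/\varphi(n/g)},\]
and only primes $p$ for which some $n/\gcd(n,d)$ is a power of $p$ can contribute.

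\textbf{Step 2: the Möbius bookkeeping.}
This is the main computation. Fix a prime $p$ and write $n=n'p^a$ with $p\nmid n'$. Then $n/\gcd(n,d)$ is a positive power of $p$ exactly when $n'\mid d$ and $p^a\nmid d$. So the exponent of $p$ is
\[\sum_{\substack{d\mid m,\ n'\mid d}}\mu(m/d)\,f\bigl(v_p(d)\bigr),\qquad f(t)=\frac{\varphi(n)}{\varphi(p^{a-t})}=\varphi(n')\,p^{t}\quad(t<a),\]
with the convention that terms with $v_p(d)\ge a$ contribute $0$.

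If $n'\nmid m$ the sum is empty. Otherwise write $d=n'e$ with $e\mid m/n'$. The summand depends only on $v_p(e)$, so summing $\mu$ over the prime-to-$p$ part of $m/n'$ kills everything unless $m/n'=p^b$ is a power of $p$. In that case $m<n$ forces $b<a$, and the sum equals $f(b)-f(b-1)=\varphi(n')p^{b-1}(p-1)=\varphi(m)$ when $b\ge1$, or $f(0)=\varphi(n')=\varphi(m)$ when $b=0$.

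\textbf{Step 3: conclude.}
So $p$ contributes $p^{\varphi(m)}$ exactly when $n=mp^{a-b}$ with $a-b\ge1$, and at most one prime can satisfy this. This gives $p^{\varphi(m)}$ in the first case of Theorem~\ref{theo:apostol} and $1$ otherwise.
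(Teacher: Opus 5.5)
Your proof is correct. The paper itself gives no argument for this statement: it quotes the result from Apostol~\cite{apostol} and uses it as a black box. Your computation is therefore an independent, self-contained route rather than a variant of the paper's.

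\textbf{What your route buys.} It rests only on facts the paper already records: the M\"obius product formula for $\Phi_m$ in $\Q(\lambda)$ and Corollary~\ref{cor:valuephin1} on $\Phi_k(1)$. Beyond these you need only that $\xi\mapsto\xi^d$ maps $\Omega_n$ onto $\Omega_{n/\gcd(n,d)}$ with fibres of equal size. Your remark that $d\le m<n$ forces $\xi^d\neq1$ is exactly what makes the M\"obius formula usable at $\xi\in\Omega_n$.

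\textbf{How it matches the paper's uses of the theorem.}
\begin{itemize}
\item Your Step~1 alone shows that every prime factor of $\resultant(\Phi_m,\Phi_n)$ divides $n$. Indeed, each $\Phi_{n/g}(1)$ is either $1$ or a prime $p$ with $n/g$ a power of $p$. This is all that the arithmetic proofs of Corollary~\ref{cor:exp} and Proposition~\ref{prop:alphanr} need.
\item The exact exponent from your M\"obius bookkeeping in Step~2 is what Lemma~\ref{lem:valres} requires.
\end{itemize}
The citation is, of course, shorter.

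\textbf{A small point on signs.} Evenness of the degrees only makes the order of the arguments irrelevant. Positivity of $\prod_{\xi\in\Omega_n}\Phi_m(\xi)$ for $n\ge3$ comes instead from pairing $\xi$ with $\bar\xi\neq\xi$. You can avoid both this and the hand checks by keeping track of signs throughout:
\begin{itemize}
\item One has $\prod_{\xi\in\Omega_n}(\xi^d-1)=(-1)^{\varphi(n)}\Phi_{n/g}(1)^{\varphi(n)/\varphi(n/g)}$.
\item For $m>1$, $\sum_{d\mid m}\mu(m/d)=0$ kills the sign. Also $n\ge3$ there, so swapping the arguments is harmless.
\item For $m=1$, the factor $(-1)^{\varphi(n)}$ cancels the one coming from swapping the arguments.
\end{itemize}
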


We will use consequences of the well-known fact below concerning cyclotomic polynomials (see \cite{washington}).

\begin{lemma}\label{lem:phimq}
For every prime number $p\geq 2$ and every $m\in\N^\ast$,
\[
\Phi_{mp}(\lambda) =
\begin{cases}
\Phi_m(\lambda^p) & \text{if } p\mid m,\\
\dfrac{\Phi_m(\lambda^p)}{\Phi_m(\lambda)} & \text{otherwise.}
\end{cases}
\]
\end{lemma}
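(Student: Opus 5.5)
The identity follows from the cyclotomic factorisation $\lambda^k-1=\prod_{e\mid k}\Phi_e(\lambda)$ recalled above, together with unique factorisation in $\Z[\lambda]$ (or, equivalently, comparison of roots in $\Qbar$). I will first show that $\Phi_m(\lambda^p)$ factors as a product of cyclotomic polynomials, and then identify the factors in the two cases.

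\emph{Roots of $\Phi_m(\lambda^p)$.} A complex number $\zeta$ is a root of $\Phi_m(\lambda^p)$ exactly when $\zeta^p$ is a primitive $m$-th root of unity. Such a $\zeta$ is a root of unity. If $k$ denotes its order, then $\zeta^p$ has order $k/\gcd(k,p)$. This order equals $m$ if and only if one of the following holds:
\begin{itemize}
\item $p\mid k$ and $k=mp$;
\item $p\nmid k$ and $k=m$.
\end{itemize}
The second possibility can only occur when $p\nmid m$.

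Counting degrees confirms that no roots are missing and that there are no multiplicities. The polynomial $\Phi_m(\lambda^p)$ has degree $p\varphi(m)$. Recall that $\varphi(mp)=p\varphi(m)$ if $p\mid m$, and $\varphi(mp)=(p-1)\varphi(m)$ otherwise.
\begin{itemize}
\item If $p\mid m$, the root set is exactly $\Omega_{mp}$, which has $\varphi(mp)=p\varphi(m)$ elements.
\item If $p\nmid m$, the root set is $\Omega_{mp}\sqcup\Omega_m$, which has $(p-1)\varphi(m)+\varphi(m)=p\varphi(m)$ elements.
\end{itemize}
In both cases the number of distinct roots equals the degree, so all roots are simple.

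\emph{Conclusion.} Both sides are monic, so they are equal over $\C$:
\[\Phi_m(\lambda^p)=\Phi_{mp}(\lambda)\ \text{ if } p\mid m,\qquad \Phi_m(\lambda^p)=\Phi_{mp}(\lambda)\Phi_m(\lambda)\ \text{ if } p\nmid m.\]
Dividing by $\Phi_m(\lambda)$ in the second case gives the statement. Since $\Phi_m$ is monic, this quotient is an exact division in $\Z[\lambda]$.

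\emph{Alternative route.} One could also argue with Möbius products, using $\Phi_n=\prod_{d\mid n}(\lambda^d-1)^{\mu(n/d)}$, and split the divisors of $mp$ according to whether $p$ divides them. This is routine, but the root-counting argument above is shorter.

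The only delicate point is the order computation $\operatorname{ord}(\zeta^p)=k/\gcd(k,p)$ and the resulting case distinction. Once that is settled, the degree count closes the argument, so I expect no real obstacle.
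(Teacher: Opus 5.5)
Your proof is correct. The paper does not prove this lemma: it states it as a well-known fact with a reference to Washington's book on cyclotomic fields. Your argument therefore supplies a self-contained proof where the paper only gives a citation.

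Each step is sound:
\begin{itemize}
\item The identification of the root set of $\Phi_m(\lambda^p)$ via $\operatorname{ord}(\zeta^p)=k/\gcd(k,p)$ is correct. The ``if and only if'' covers the converse inclusions, namely that every element of $\Omega_{mp}$, and of $\Omega_m$ when $p\nmid m$, is indeed a root.
\item The degree count $p\varphi(m)=\varphi(mp)$, respectively $\varphi(mp)+\varphi(m)$, correctly rules out repeated roots.
\item The final comparison of monic polynomials through $\Phi_n=\prod_{\omega\in\Omega_n}(\lambda-\omega)$ is valid. It implicitly uses that $\Phi_m(\lambda^p)$ is monic, which holds because $\Phi_m$ is.
\end{itemize}

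The M\"obius route you mention would also work. It has the small advantage of being a formal identity in $\Q(\lambda)$ built from the other formula the paper recalls. The root-counting argument is the more transparent of the two.
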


\begin{coro}\label{cor:phikpr}
For every prime number $p\geq 2$, every $k\in\N^\ast$ not divisible by $p$ and every $r\in\N^\ast$,
\[\Phi_{kp^r}(\lambda) = \frac{\Phi_k(\lambda^{p^r})}{\Phi_k(\lambda^{p^{r-1}})}.\]
\end{coro}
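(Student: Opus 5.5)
Since $p\nmid k$ and $r\geq 1$, I will prove the identity by induction on $r$, applying Lemma~\ref{lem:phimq} once at each step.

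\emph{Base case $r=1$.} Take $m=k$ in Lemma~\ref{lem:phimq}. Since $p\nmid k$, the second case applies and gives
\[\Phi_{kp}(\lambda)=\frac{\Phi_k(\lambda^p)}{\Phi_k(\lambda)}=\frac{\Phi_k(\lambda^{p^1})}{\Phi_k(\lambda^{p^{0}})}.\]
This is the claim for $r=1$.

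\emph{Inductive step.} Assume the claim holds for some $r\geq 1$. Take $m=kp^r$ in Lemma~\ref{lem:phimq}; now $p\mid m$, so the first case applies and gives
\[\Phi_{kp^{r+1}}(\lambda)=\Phi_{kp^r}(\lambda^p).\]
Substituting $\lambda^p$ for $\lambda$ in the induction hypothesis gives
\[\Phi_{kp^r}(\lambda^p)=\frac{\Phi_k\bigl((\lambda^p)^{p^r}\bigr)}{\Phi_k\bigl((\lambda^p)^{p^{r-1}}\bigr)}=\frac{\Phi_k(\lambda^{p^{r+1}})}{\Phi_k(\lambda^{p^r})},\]
which is the claim for $r+1$.

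The substitution $\lambda\mapsto\lambda^p$ is legitimate because the identity holds in $\Q(\lambda)$, or equivalently as the polynomial identity $\Phi_{kp^r}(\lambda)\,\Phi_k(\lambda^{p^{r-1}})=\Phi_k(\lambda^{p^r})$ in $\Z[\lambda]$, and substitution is a ring homomorphism. No step is a real obstacle; the only care needed is to track which case of Lemma~\ref{lem:phimq} applies, namely $p\nmid k$ at the base case and $p\mid kp^r$ at each inductive step.
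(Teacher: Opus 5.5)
Your proof is correct and is the same argument the paper has in mind. The paper gives no written proof and lists the statement as an immediate corollary of Lemma~\ref{lem:phimq}: one application of the case $p\nmid k$, then repeated use of the case $p\mid kp^r$ together with the substitution $\lambda\mapsto\lambda^p$, which is exactly your induction.
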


\begin{coro}\label{cor:valuephin1}
For every $n\geq 2$, 
\[\Phi_n(1)=\begin{cases} p &\text{if }n=p^r,\text{ with }p\geq 2\text{ prime and }r \in\N^\ast, \\
1 &\text{otherwise.}
\end{cases}\]
\end{coro}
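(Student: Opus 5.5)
Evaluating at $\lambda=1$ is the natural starting point. By definition, $\lambda^n-1 = \prod_{d\mid n}\Phi_d(\lambda)$. Dividing by $\Phi_1(\lambda)=\lambda-1$ gives, for $n\geq 2$,
\[1+\lambda+\cdots+\lambda^{n-1} = \prod_{d\mid n,\ d>1}\Phi_d(\lambda),\]
and evaluating at $\lambda=1$ yields
\[n = \prod_{d\mid n,\ d>1}\Phi_d(1).\]
Now set $\Lambda(n)=\log p$ if $n=p^r$ with $p$ prime and $r\geq 1$, and $\Lambda(n)=0$ otherwise. The classical identity $\sum_{d\mid n}\Lambda(d)=\log n$ (from unique factorization) says that
\[n=\prod_{d\mid n,\ d>1} e^{\Lambda(d)}.\]
So both $d\mapsto \log\Phi_d(1)$ and $\Lambda$, for $d\geq 2$, satisfy the same divisor-sum relation $\sum_{d\mid n,\, d>1} f(d)=\log n$ for every $n\geq 2$.

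The key step is strong induction on $n\geq 2$. If the claim holds for every proper divisor $d>1$ of $n$, then
\[\log\Phi_n(1) = \log n - \sum_{d\mid n,\ 1<d<n}\Lambda(d) = \Lambda(n).\]
There is one point of care: taking logarithms requires $\Phi_n(1)>0$. This holds because $\Phi_n(1)$ is a nonzero integer, as it divides $n$ in $\Z$, and the real roots of $\lambda^n-1$ are only $\pm1$. Hence for $n\geq 2$ the real polynomial $\Phi_n$ has no roots in $(1,\infty)$ and is monic, so it is positive at $1$. Alternatively, I would avoid logarithms altogether and argue multiplicatively: the induction hypothesis gives
\[\Phi_n(1)=\frac{n}{\prod_{d\mid n,\,1<d<n} e^{\Lambda(d)}}= e^{\Lambda(n)}.\]

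A more direct alternative uses Corollary~\ref{cor:phikpr}. If $n=p^r$, take $k=1$ to get
\[\Phi_{p^r}(\lambda)=\frac{\lambda^{p^r}-1}{\lambda^{p^{r-1}}-1}=1+\mu+\cdots+\mu^{p-1}\quad\text{with }\mu=\lambda^{p^{r-1}},\]
so $\Phi_{p^r}(1)=p$. If $n=kp^r$ with $k>1$ and $p\nmid k$, then
\[\Phi_n(1)=\frac{\Phi_k(1)}{\Phi_k(1)}=1,\]
which is valid provided $\Phi_k(1)\neq 0$. This holds because $k>1$, so $1$ is not a primitive $k$-th root of unity. Any $n$ with at least two distinct prime factors can be written in this form by choosing $p$ to be one of its primes.

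I would present this second argument, since it relies only on results already stated in the paper. The main subtle point is ensuring that the denominator $\Phi_k(1)$ does not vanish, which is exactly where $k>1$ is needed.
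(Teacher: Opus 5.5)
Your proof is correct, and the argument you choose to present is exactly the derivation the paper intends: evaluating Corollary~\ref{cor:phikpr} at $\lambda=1$, with $k=1$ and the rewriting $1+\mu+\cdots+\mu^{p-1}$ for prime powers, and with $k>1$ (so that $\Phi_k(1)\neq 0$) otherwise; the paper states this corollary without proof immediately after Corollary~\ref{cor:phikpr}. Your first argument, comparing $n=\prod_{d\mid n,\,d>1}\Phi_d(1)$ with $\sum_{d\mid n}\Lambda(d)=\log n$ by strong induction, is also valid, and it has the minor advantage of using only the defining factorization of $\lambda^n-1$ rather than Lemma~\ref{lem:phimq}.
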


\subsection{Gaussian polynomials}

For $n\in \N$, let $\pi_n\in \Z[\lambda]$ denote the polynomial 
\[\pi_n := \prod_{j=1}^n (1-\lambda^j),\]
where the empty product is understood to be equal to 1, so that $\pi_0 = 1$. 

Given integers $0 \leq k \leq n$, it is well known that the binomial coefficient 
\[\binom{n}{k} :=  \frac{n!}{k!(n-k)!}\]
is an integer. This may be deduced from the Pascal relation
\[\binom{n}{k} = \binom{n-1}{k-1} + \binom{n-1}{k}.\]
Similarly, the Gaussian polynomial
\[\gauss{n}{k}:=\frac{\pi_n}{\pi_k\pi_{n-k}}
\]
is a polynomial with integer coefficients. This may be deduced from the Gauss relation
\[\gauss{n}{k}=\gauss{n-1}{k-1}+\lambda^k\gauss{n-1}{k}.\]
%
%
%\begin{lemma}
%For all integers $0\leq j\leq k$, we have that 
%\[\gauss{k}{j}(1) = \binom{k}{j}\quad \text{and}\quad \gauss{k}{j}'(1) = \binom{k}{j} \cdot  \frac{j(k-j)}{2}.\]
%\end{lemma}
%
%\begin{proof}
%For $k\geq 1$, let $\rho_k\in \Z[\lambda]$ and $\sigma_k\in \Z[\lambda]$ be defined by 
%\[\rho_k:= \frac{1-\lambda^k}{1-\lambda} = 1+\lambda + \cdots + \lambda^{k-1}\quad \text{and}\quad  \sigma_k:= \frac{\pi_k}{\pi_1^k} = \prod_{j=1}^k \rho_j.\]
%Note that
%\[\rho_k(1) = k, \quad \rho_k'(1) = 1+\cdots+(k-1) = \frac{(k-1)k}{2},\]
%\[\sigma_k(1) = k! \quad \text{and}\quad \frac{\sigma_k'(1)}{\sigma_k(1)} = \sum_{j=1}^k \frac{\rho_j'(1)}{\rho_j(1)} = \sum_{j=1}^k \frac{j-1}{2} = \frac{(k-1)k}{4}.\]
%Now, assume $0\leq j\leq k$.  If $j=0$ or $j=k$, then $\gauss{k}{j} = 1$ and the result is immediate. So, let us assume that $j\in \ob1,k-1\cb$. Then, 
%\[\gauss{k}{j} = \frac{\sigma_k}{\sigma_j \sigma_{k-j}}, \quad \text{thus}\quad \gauss{k}{j} (1)= \frac{k!}{j! (k-j)!} = \binom{k}{j}.\]
%In addition, 
%\begin{eqnarray*}
%\gauss{k}{j}'(1) &=& \gauss{k}{j}(1) \cdot \left(\frac{\sigma_k'(1)}{\sigma_k(1)} - \frac{\sigma_j'(1)}{\sigma_j(1)}-\frac{\sigma_{k-j}'(1)}{\sigma_{k-j}(1)}\right)\\
%&=& \binom{k}{j} \cdot \left(\frac{(k-1)k}{4} - \frac{(j-1)j}{4}-\frac{(k-j-1)(k-j)}{4}\right)\\
%& = & \binom{k}{j} \cdot  \frac{j(k-j)}{2}.\qedhere
%\end{eqnarray*}
%\end{proof}
Note that the roots of $\gauss{n}{k}$ are roots of unity; they belong to $\ds\bigcup_{j\in \ob2,n\cb} \Omega_j$. 

\begin{lemma}\label{lem:gausseuclid}
Assume $0\leq k\leq n$ and $q\geq 1$ are integers and $\omega \in \Omega_q$. Consider the Euclidean divisions $k=j q+r$ and $n=m q+s$ with $r, s\in \ob0,q-1\cb$. 
\begin{itemize}
\item If $r\leq s$, then $\omega$ is not a root of $\gauss{n}{k}$ and 
\[\gauss{n}{k}(\omega)=\binom{m}{j} \cdot\gauss{s}{r}(\omega).
\]
\item Otherwise $\omega$ is a simple root of $\gauss{n}{k}$. 
\end{itemize}
\end{lemma}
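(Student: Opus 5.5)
This is the $q$-Lucas theorem; I would prove it by counting how many factors in the numerator and in the denominator of $\gauss{n}{k}$ vanish at $\omega$, and then evaluating the nonvanishing factors.

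\textbf{Setup.} Write
\[\gauss{n}{k}=\frac{\pi_n}{\pi_k\pi_{n-k}}=\frac{\prod_{i=1}^{n}(1-\lambda^i)}{\prod_{i=1}^{k}(1-\lambda^i)\prod_{i=1}^{n-k}(1-\lambda^i)}.\]
Since $\omega$ is a primitive $q$-th root of unity, the factor $1-\omega^i$ vanishes exactly when $q\mid i$, and then $\omega$ is a simple root of $1-\lambda^i$, because $\lambda^i-1$ is squarefree (its discriminant is nonzero). So the order of vanishing of $\pi_N$ at $\omega$ is $\lfloor N/q\rfloor$.

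\textbf{Order of vanishing.} We have $n=mq+s$ and $k=jq+r$. If $r\le s$, then $n-k=(m-j)q+(s-r)$ with $0\le s-r\le q-1$. If $r>s$, then $n-k=(m-j-1)q+(q+s-r)$ with $0\le q+s-r\le q-1$. Hence the order of $\gauss{n}{k}$ at $\omega$ is
\[m-j-\lfloor (n-k)/q\rfloor,\]
which equals $0$ if $r\le s$ and $1$ if $r>s$. Since $\gauss{n}{k}$ is a polynomial, this settles the second bullet: in that case $\omega$ is a simple root.

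\textbf{Value when $r\le s$.} Group the factors of $\pi_N$ with $N=aq+b$ and $0\le b<q$ as follows.

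The factors $1-\lambda^{iq}$ for $i\le a$ give $\prod_{i=1}^{a}(1-\lambda^{iq})$. Substituting $\mu=\lambda^q$, and noting that $\omega$ becomes $1$, this is $\prod_{i=1}^{a}(1-\mu^i)$. It vanishes at $\mu=1$, so it must be handled as a limit: the ratio
\[\frac{\prod_{i\le m}(1-\mu^i)}{\prod_{i\le j}(1-\mu^i)\prod_{i\le m-j}(1-\mu^i)}=\gauss{m}{j}(\mu)\]
tends to $\binom{m}{j}$ as $\mu\to1$. This needs the count of vanishing factors to match (here $m=j+(m-j)$), which is exactly the case $r\le s$.

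The nonvanishing factors $1-\lambda^i$ with $q\nmid i$ are handled by periodicity: $\omega^{i}$ depends only on $i \bmod q$. Writing $i=tq+u$ with $1\le u\le q-1$, the product of these factors at $\omega$ equals $P^{a}\prod_{u=1}^{b}(1-\omega^u)$, where $P=\prod_{u=1}^{q-1}(1-\omega^u)$, which is nonzero.

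\textbf{Assembling.} The powers of $P$ cancel since $m=j+(m-j)$. The remaining partial products give exactly
\[\frac{\prod_{u\le s}(1-\omega^u)}{\prod_{u\le r}(1-\omega^u)\prod_{u\le s-r}(1-\omega^u)}=\gauss{s}{r}(\omega),\]
and this is nonzero since $s<q$. To keep the argument rigorous, I would evaluate $\gauss{n}{k}$ as the polynomial limit $\lambda\to\omega$. Writing $1-\lambda^{iq}=(1-\lambda^q)\,\frac{1-\lambda^{iq}}{1-\lambda^q}$, the factors $(1-\lambda^q)$ cancel in equal numbers from numerator and denominator. Since $\frac{1-\lambda^{iq}}{1-\lambda^q}\to i$ as $\lambda\to\omega$, the quotient of these terms tends to $\frac{m!}{j!\,(m-j)!}=\binom{m}{j}$. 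This gives $\gauss{n}{k}(\omega)=\binom{m}{j}\gauss{s}{r}(\omega)\neq0$.

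The main care point is this limit bookkeeping, namely checking that the vanishing factors cancel in equal numbers; everything else is routine.
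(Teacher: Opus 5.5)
Your proposal is correct and follows essentially the same route as the paper. The order of vanishing at $\omega$ comes from comparing $\lfloor n/q\rfloor$ with $\lfloor k/q\rfloor+\lfloor (n-k)/q\rfloor$, and for $r\leq s$ the value comes from pairing the factors $1-\lambda^{iq}$ (giving $\binom{m}{j}$) and using periodicity of the other factors (giving $\gauss{s}{r}(\omega)$); the paper packages this same bookkeeping as the asymptotic $\pi_k(\lambda)\sim \frac{j!\,q^{2j}}{\omega^j}\pi_r(\omega)(\omega-\lambda)^j$ as $\lambda\to\omega$.
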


\begin{proof}
Assume $q\geq 1$ is an integer and $\omega\in \Omega_q$. Let $i\geq 1$ be an integer. Observe that $\omega$ is a root of $1-\lambda^i$ if and only if $i$ is a multiple of $q$. In that case, it is a simple root and $1-\lambda^i \sim\frac{i}{\omega}(\omega-\lambda)$ as $\lambda \to \omega$. Assume now that $k\geq 0$ is an integer and consider the Euclidean division $k=j q+r$. We have that 
\[\pi_k(\lambda) \underset{\lambda \to \omega}\sim  \frac{j!q^j}{\omega^j} (\omega-\lambda)^j \left(\prod_{i=1}^{q-1} (1-\omega^i)\right)^j  \prod_{i=1}^{r}  (1-\omega^i) =  \frac{j!q^{2j}}{\omega^j} \pi_r(\omega) (\omega-\lambda)^j.\]
Assume finally that $0\leq k\leq n$ and consider the Euclidean divisions $k=j q+r$ and $n=m q+s$. We have that
\[
\left\lfloor\frac{n}{q}\right\rfloor-1 \leq\left\lfloor\frac{k}{q}\right\rfloor+\left\lfloor\frac{n-k}{q}\right\rfloor \leq\left\lfloor\frac{n}{q}\right\rfloor .
\]
If $r\leq s$, the right side is an equality and 
\[\gauss{n}{k}(\lambda)\underset{\lambda \to \omega}\longrightarrow \frac{\ds \frac{m! q^{2m}}{\omega^m}}{\ds \frac{j! q^{2j}}{\omega^j}  \frac{(m-j)! q^{2(m-j)}}{\omega^{m-j}}}  \frac{\pi_s(\omega)}{\pi_r(\omega) \pi_{s-r}(\omega)}=\binom{m}{j} \gauss{s}{r}(\omega).
\]
Otherwise, the left side is an equality and $\omega$ is a simple root of $\gauss{n}{k}$. 
\end{proof}

Assume now that $m\geq 1$ is an integer,  $(k_1, \ldots, k_m)\in \N^m$ and $n := k_1+\cdots+k_m$. The multinomial coefficient
\[\binom{n}{k_1,\ldots,k_m}:= \frac{n!}{k_1!\cdots k_m!}\]
is an integer: it is equal to the number of ways of distributing $n$ distinct objects into $m$ distinct bins, with $k_1$ objects in the first bin, $k_2$ objects in the second bin, and so on. 
The Gaussian polynomial 
\[\gauss{n}{k_1,\ldots,k_m}:=\frac{\pi_n}{\pi_{k_1}\cdots \pi_{k_m}} \in \Z[\lambda]\]
is a polynomial with  integer coefficients. Indeed,
\[\gauss{n}{k_1,\ldots,k_m}= \gauss{k_1+k_2}{k_2} \times\gauss{k_1+k_2+k_3}{k_3} \times \cdots \times \gauss{k_1+k_2+\ldots +k_m}{k_m} \in \Z[\lambda].\]

\subsection{Parabolic multiplicity}

Assume
\[F(z) =z \sum_{k\geq  0} f_k z^k\quad \text{with}\quad f_0:=1\]
is a formal power series fixing the origin with multiplier $1$. 
For each $\lambda \in \C$, define $F_\lambda := \lambda F$. 

For every $n\in \N^\ast$, 
\[F_\lambda ^{\circ n}(z)  = \lambda^n z \sum_{k\geq  0} c_{n,k}(\lambda) z^k\quad \text{with}\quad c_{n,0} = 1\quad\text{and}\quad c_{n,k}(\lambda)\in \Z[f_1,\ldots,f_k][\lambda].\]
If $\omega\in \Omega_n$ is a primitive $n$-th root of unity, then $F_\omega^{\circ n}$ is tangent to the identity at $0$. 
Suppose that $F_\omega^{\circ n}$ is not the identity and let $m\geq 1$ be the smallest integer such that 
\[F_\omega^{\circ n}(z)  = z\left(1 + cz^m + \O(z^{m+1})\right)\quad \text{with}\quad c\neq 0.\]
Then, 
\[F_\omega^{\circ n}\circ F_\omega(z) = F_\omega(z)\left(1 + c\omega^m z^m + \O(z^{m+1})\right)\]
and 
\begin{eqnarray*}
F_\omega\circ F_\omega^{\circ n}(z) &=& \omega z\left(1 + cz^m + \O(z^{m+1})\right)\left(\sum_{k\geq  0} f_k z^k\left(1 + cz^m +\O(z^{m+1})\right)^k\right)\\
&=&  F_\omega(z)\left(1 + c z^m + \O(z^{m+1})\right).
\end{eqnarray*}
Since $F_\omega$ commutes with $F_\omega^{\circ n}$, it follows that $\omega^m=1$, so that $m$ is a multiple of $n$. 
Therefore, if $F_\omega^{\circ n}$ is not the identity, then 
\[F_\omega^{\circ n}(z)  = z\left(1 + cz^{\nu n} + \O(z^{\nu n +1})\right)\]
for some integer $\nu\geq 1$ called the parabolic multiplicity of $F_\omega$ at $0$.

We are interested in studying whether the parabolic multiplicity of $F_\omega$ at $0$ is equal to $1$, i.e., whether $c_n(\omega)\neq 0$, where $c_n$ is the polynomial defined by 
\[c_n:=c_{n,n}.\]
We use this notation throughout the paper.

\subsection{Linearization\label{sec:lin}}

When $\lambda$ is neither $0$ nor a root of unity, there exists a unique formal power series
\[G_\lambda(z) = z \sum_{k\geq 0} g_k(\lambda) z^k \quad \text{with}\quad g_0:=1,\]
such that
\[G_\lambda (\lambda z) = F_\lambda \circ G_\lambda(z).\]
The coefficients $g_k$ satisfy the recursive relation
\[g_0=1\quad \text{and}\quad \forall k\geq 1,\quad g_k = \frac{1}{\lambda^k-1}\sum_{m=1}^k f_m \cdot \left(\sum_{(j_0,\ldots, j_m)\in \N^{m+1}\atop j_0+\ldots+j_m = k-m} g_{j_0}\cdots g_{j_m}\right).\]
In particular, for $k\geq 1$, the coefficient $g_k\in \Q[f_1,\ldots,f_k](\lambda)$ is a rational function whose poles are contained in the set of $j$-th roots of unity with $j\in \ob1;k\cb$. 

For $k\geq 0$, set $a_k := \pi_k g_k$. Then, 
\begin{equation}\label{eq:lin}
a_0=1\quad \text{and}\quad  \forall k\geq 1,\quad a_k = - \sum_{m=1}^k f_m \frac{\pi_{k-1}}{\pi_{k-m}} \left(\sum_{(j_0,\ldots, j_m)\in \N^{m+1}\atop j_0+\ldots+j_m = k-m} \gauss{k-m}{j_0,\ldots,j_m} a_{j_0}\cdots a_{j_m}\right).
\end{equation}
In particular,  $a_k$ is a polynomial with coefficients in $\Z[f_1,\ldots, f_k]$. 

\begin{lemma}\label{lem:aqcq}
For every integer $n\geq 1$,
\[ \quad a_n+c_n \equiv 0 \pmod{\Phi_n}.\]
\end{lemma}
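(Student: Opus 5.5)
The plan is to evaluate at each $\omega\in\Omega_n$ and compare coefficients in the linearization equation, iterated $n$ times. Treat $f_1,f_2,\ldots$ as indeterminates and work over the field $K=\Q(f_1,f_2,\ldots)$. Since $a_n+c_n\in\Z[f_1,\ldots,f_n][\lambda]$ and $\Phi_n$ is monic, Euclidean division by $\Phi_n$ can be done in $\Z[f_1,\ldots,f_n][\lambda]$. The remainder has degree $<\varphi(n)$. If it vanishes at all $\varphi(n)$ distinct roots $\omega\in\Omega_n$ (in $\overline K$), it is identically zero. So it suffices to show that $a_n(\omega)+c_n(\omega)=0$ for every $\omega\in\Omega_n$.

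First, iterating $G_\lambda(\lambda z)=F_\lambda\circ G_\lambda(z)$ gives
\[G_\lambda(\lambda^n z)=F_\lambda^{\circ n}\circ G_\lambda(z)=\lambda^n G_\lambda(z)\sum_{k\geq0}c_{n,k}(\lambda)G_\lambda(z)^k.\]
We compare the coefficients of $z^{n+1}$. The left side contributes $\lambda^{n(n+1)}g_n$. The right side contributes $\lambda^n\bigl(g_n+c_n+R\bigr)$, where
\[R=\sum_{k=1}^{n-1}c_{n,k}\,P_k(g_1,\ldots,g_{n-1})\]
for certain polynomials $P_k$. Here $P_k$ is the coefficient of $z^{n+1}$ in $G_\lambda^{k+1}$, and it only involves $g_j$ with $j\leq n-k<n$. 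By \S\ref{sec:lin}, each $g_j$ with $j<n$ has poles only at $i$-th roots of unity with $i\leq j$, so it is regular at every $\omega\in\Omega_n$.

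Next, we multiply by $\pi_{n-1}$ and write $g_n=a_n/\pi_n$. This yields
\[a_n\,\frac{\lambda^{n^2+n}-\lambda^n}{1-\lambda^n}=\lambda^n\pi_{n-1}\,(c_n+R).\]
The factor on the left equals $-\lambda^n(1+\lambda^n+\cdots+\lambda^{n(n-1)})$, whose value at $\omega$ is $-n$. Meanwhile $\pi_{n-1}(\omega)=\prod_{i=1}^{n-1}(1-\omega^i)=\Phi_n'$-type product $=n$, since $\prod_{i=1}^{n-1}(\lambda-\omega^i)=1+\lambda+\cdots+\lambda^{n-1}$ evaluated at $\lambda=1$. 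Both sides are regular at $\omega$, so we may evaluate there and obtain
\[-n\,a_n(\omega)=n\bigl(c_n(\omega)+R(\omega)\bigr).\]

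It remains to show that $R(\omega)=0$, and this is the key step. It is exactly the argument in the subsection on parabolic multiplicity, run over the field $\overline K$. If $F_\omega^{\circ n}(z)=z(1+cz^m+\O(z^{m+1}))$ with $c\neq0$, then commuting with $F_\omega$ forces $\omega^m=1$, so $n\mid m$. Hence $c_{n,k}(\omega)=0$ for $1\leq k\leq n-1$, and therefore $R(\omega)=0$. This argument works verbatim for formal power series over any field of characteristic $0$, and it also covers the case where $F_\omega^{\circ n}$ is the identity. We conclude that $a_n(\omega)=-c_n(\omega)$ for every $\omega\in\Omega_n$, which gives the lemma.

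The main care point is the bookkeeping in the $z^{n+1}$ coefficient: one must check that only $g_j$ with $j<n$ appear in $R$, so that no hidden pole at $\Omega_n$ arises.
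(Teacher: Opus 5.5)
Your proposal is correct and follows essentially the same route as the paper: you compare the coefficients of $z^{n+1}$ in $G_\lambda(\lambda^n z)=F_\lambda^{\circ n}\circ G_\lambda(z)$, note that the middle terms involve only $g_1,\ldots,g_{n-1}$ (regular at $\omega$) multiplied by $c_{n,k}(\omega)=0$, and use $\prod_{k=1}^{n-1}(1-\omega^k)=n$. The only cosmetic differences are that the paper takes the limit of $(1-\lambda^{n^2})g_n(\lambda)$ as $\lambda\to\omega$ where you clear the denominator $\pi_{n-1}$ before evaluating, and that you treat the $f_k$ as indeterminates, which gives the congruence universally in $\Z[f_1,\ldots,f_n][\lambda]$.
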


\begin{proof}
Let $n\geq 1$ be an integer and fix $\omega\in \Omega_n$. It suffices to show that $a_n(\omega) +c_n(\omega)=0$.  
On the one hand, we have 
\[ G_\lambda\left( \lambda^n z \right) = \lambda^n z \sum_{k \geq 0} g_k(\lambda) \lambda^{k n} z^k.\] 
The coefficient of $z^{n +1}$ in $G_\lambda(\lambda^n z)$ is  $\lambda^{n +n^2} g_n(\lambda)$. 
On the other hand,
 \[ F_\lambda^{\circ n} \circ G_\lambda(z) = \lambda^n \sum_{k \geq 0} c_{n, k}(\lambda) \bigl(G_\lambda(z)\bigr)^{k +1} 
 = \lambda^n \sum_{k \geq 0} c_{n, k}(\lambda) z^{k +1} \left( \sum_{j\geq 0} g_j(\lambda) z^j\right)^{k +1} ,\] 
 with $c_{n, 0} = 1$ and $g_0= 1$. 
 Therefore, the coefficient of $z^{n +1}$ in $F_\lambda^{\circ n} \circ G_\lambda(z)$ is 
 \[\lambda^n \sum_{k = 0}^n c_{n, k}(\lambda)\phi_{n, k}(\lambda)\quad \text{with}\quad \forall k \in\ob0;n\cb,
 \quad \phi_{n, k} := \sum_{\substack{( r_0, \ldots, r_k) \in \N^{k +1}\atop r_0+\cdots +r_k = n -k}} g_{r_0} \cdots g_{r_k}.\] 
 Consequently, since $G_\lambda( \lambda^n z ) = F_\lambda^{\circ n} \circ G_\lambda(z)$,
 \[ \lambda^{n^2} g_n = \sum_{k = 0}^n c_{n, k} \phi_{n, k}\quad \text{which yields}\quad 
 \left(1-\lambda^{n^2}\right) g_n +c_n+\sum_{k = 1}^{n -1} c_{n, k}\phi_{n, k}=0. \] 
 Now, note that for $k \in \ob 1;n -1\cb$, the rational function  $\phi_{n, k}$ belongs to $\Q[ g_1, \cdots, g_{n -1} ]$, and in particular has no pole at $\omega$. In addition,  $c_{n, k}(\omega) = 0$ for each $k \in \ob 1;n -1\cb$. It follows that 
 \[ \lim_{\lambda \rightarrow \omega} \left(1-\lambda^{n^2}\right) g_n(\lambda) +c_n(\omega) =0.\] 
 Thus, it suffices to show that $\lim\limits_{\lambda \rightarrow \omega} \left(1-\lambda^{n^2}\right) g_n(\lambda) = a_n(\omega)$. 
 We have that
 \[\left(1-\lambda^{n^2}\right) g_n(\lambda) = a_n(\lambda) \frac{1-(\lambda^n)^n}{1-\lambda^n}  \frac{1}{\ds \prod_{k = 1}^{n -1} \left( 1- \lambda^k\right)} \underset{\lambda\to \omega}\longrightarrow a_n(\omega),\]
 because $\ds \frac{1-x^n}{1-x}\underset{x\to 1}\longrightarrow n$ and $\ds \prod_{k = 1}^{n -1} ( 1-\omega^k )=n$. 
 \end{proof}

Consequently, 
\[\forall n\geq 1,\quad \forall \omega \in \Omega_n,\quad c_n(\omega) = 0\quad \Longleftrightarrow \quad a_n(\omega) = 0.\]

\section{The exponential family}

We now consider the family $\{F_\lambda\}_{\lambda\in \C^\ast}$ of transcendental entire maps defined by 
\[F_\lambda(z) := \lambda z \e^{-z}.\]
Given an integer $n\geq 1$, let $c_n\in \Q[\lambda]$ be the polynomial  defined by 
\[c_n:= c_{n,n}\quad \text{with}\quad F_\lambda^{\circ n}(z) = \lambda^n z \sum_{k\geq 0} c_{n,k}(\lambda) z^k.\]
Corollary~\ref{cor:exp} asserts that $c_n$ does not vanish on $\Omega_n$. We now present  an arithmetic proof of this result. 
To this end, consider the family of linearizing maps $G_\lambda$ defined in \S\ref{sec:lin}: 
\[G_\lambda(z) = z \sum_{k\geq 0} g_k(\lambda) z^k \quad \text{with}\quad g_0:=1\quad \text{and}\quad G_\lambda (\lambda z) = F_\lambda \circ G_\lambda(z).\]
It is convenient to express the linearizing map $G_\lambda$ as 
\[G_\lambda(z) = z  \exp \bigl( H_\lambda(z)\bigr)\quad \text{with}\quad H_\lambda(z) = \sum_{k\geq 1} h_k(\lambda) z^k.\]

\begin{lemma}\label{lem:gkhk}
For every integer $k\geq 1$,  
\[g_{k} -h_{k} \in \Q\left[ h_{1}, \dotsc, h_{k -1} \right].\]
\end{lemma}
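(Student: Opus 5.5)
We plan to prove Lemma~\ref{lem:gkhk} by expanding the exponential as a formal power series in $z$ and comparing coefficients. Since $H_\lambda(z)=\sum_{k\geq1}h_kz^k$ has no constant term, the series
\[\exp\bigl(H_\lambda(z)\bigr)=\sum_{m\geq 0}\frac{H_\lambda(z)^m}{m!}\]
is well defined as a formal power series, because $H_\lambda^m=\O(z^m)$. Consequently only the terms with $m\leq k$ contribute to the coefficient of $z^k$. By definition $G_\lambda(z)=z\exp(H_\lambda(z))$, so $g_k$ is precisely the coefficient of $z^k$ in $\exp(H_\lambda(z))$.

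The main step is to identify that coefficient. For each $m\geq1$, the coefficient of $z^k$ in $H_\lambda^m$ equals
\[\sum_{\substack{(i_1,\ldots,i_m)\in(\N^\ast)^m\\ i_1+\cdots+i_m=k}} h_{i_1}\cdots h_{i_m}.\]
The term with $m=1$ contributes exactly $h_k$. For $m\geq 2$, every index satisfies $i_j\geq1$, and hence $i_j\leq k-(m-1)\leq k-1$. Each such contribution is therefore a polynomial with integer coefficients in $h_1,\ldots,h_{k-1}$, and after dividing by $m!$ it has rational coefficients. The term with $m=0$ contributes only to the constant coefficient, so it does not appear since $k\geq1$.

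Summing over $m$ gives
\[g_k-h_k=\sum_{m=2}^{k}\frac{1}{m!}\sum_{\substack{i_1+\cdots+i_m=k\\ i_j\geq1}} h_{i_1}\cdots h_{i_m}\in\Q[h_1,\ldots,h_{k-1}],\]
which is the claim. There is no real obstacle in this argument. The one point needing a remark is that $\exp(H_\lambda)$ is a well-defined formal power series with finitely many contributions to each coefficient, which holds because $H_\lambda(0)=0$. One could equally observe that $G_\lambda(z)/z$ has constant term $1$, so $H_\lambda=\log(G_\lambda/z)$ is well defined and the two sequences determine each other triangularly. The direct expansion above already gives the statement.
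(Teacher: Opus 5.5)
Your proof is correct and is essentially the paper's argument: both expand $\exp(H_\lambda)$ formally and note that the coefficient of $z^k$ consists of the single linear term $h_k$ plus a rational polynomial in $h_1,\ldots,h_{k-1}$. The only cosmetic difference is that the paper factors $\exp(H_\lambda)=\prod_j \exp(h_j z^j)$ and sums over partitions $\sum j r_j=k$ with weights $1/(r_1!\cdots r_k!)$, whereas you sum over ordered compositions of $k$ into $m$ parts weighted by $1/m!$.
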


\begin{proof}
Observe that 
\[ G_\lambda(z) = z \exp\left( \sum_{k \geq 1} h_{k}(\lambda) z^{k} \right) = z \prod_{k \geq 1} \exp\left( h_{k}(\lambda) z^{k} \right) . \] 
Therefore,
\[ G_\lambda(z) = z \prod_{k \geq 1} \left( \sum_{r_{k} \geq 0} \frac{h_{k}(\lambda)^{r_{k}}}{r_{k}!} z^{k r_{k}} \right)  = z \sum_{k \geq 0} g_{k}(\lambda) z^k\]
with $g_0=1$ and for every $k\geq 1$, 
\[g_k =  \sum_{( r_1, \ldots, r_k ) \in \N^k\atop \sum j r_j = k} \frac{h_{1}^{r_{1}} \cdots h_{k}^{r_{k}}}{r_{1}! \cdots r_{k}!} =  h_{k} +\sum_{\left( r_{1}, \ldots, r_{k-1} \right) \in \N^{k-1}\atop \sum j r_j = k} 
\frac{h_{1}^{r_{1}} \cdots h_{k -1}^{r_{k -1}}}{r_{1}! \cdots r_{k -1}!}.\qedhere
\]
\end{proof}

It follows that $h_k(\lambda)$ is a rational function whose poles are contained in the set of $j$-th roots of unity with $j\in \ob 1;k\cb$. 
We have in fact the following stronger result. For $k\geq 1$, set 
\[b_k :=  (k-1)! \pi_k h_k.\]

\begin{prop}\label{prop:recursiveexp}
We have that
\[b_1 = 1\quad \text{and}\quad \forall k\geq 1,\quad  b_{k+1} =  \sum_{j=1}^k  j   \binom{k-1}{j-1} \gauss{k}{j} b_j b_{k+1-j}.\]
In particular,  each polynomial $b_k\in \Z[\lambda]$ has integer coefficients.
\end{prop}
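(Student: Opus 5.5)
The plan is to turn the conjugacy equation $G_\lambda(\lambda z)=F_\lambda\circ G_\lambda(z)$ into a linear relation between the $h_k$ and the $g_k$. I will then eliminate the $g_k$ through a differential identity for $G_\lambda=z\exp(H_\lambda)$, and finally clear denominators by multiplying by $(k-1)!\,\pi_k$.

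\emph{Step 1.} Write both sides with $G_\lambda(z)=z\exp(H_\lambda(z))$. The left side is $\lambda z\exp\bigl(H_\lambda(\lambda z)\bigr)$. The right side is
\[\lambda G_\lambda(z)\e^{-G_\lambda(z)}=\lambda z\exp\bigl(H_\lambda(z)-G_\lambda(z)\bigr).\]
Comparing the exponents of these formal series gives $H_\lambda(\lambda z)-H_\lambda(z)=-G_\lambda(z)$. Since the coefficient of $z^{k+1}$ in $G_\lambda$ is $g_k$, this means
\[(1-\lambda^{k+1})\,h_{k+1}=g_k\qquad\text{for all } k\geq 0.\]
For $k=0$ this gives $h_1=1/(1-\lambda)$, and hence $b_1=0!\,\pi_1h_1=1$.

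\emph{Step 2.} Differentiate $G_\lambda=z\e^{H_\lambda}$ to get $zG_\lambda'=G_\lambda\cdot\bigl(1+zH_\lambda'\bigr)$. Extracting the coefficient of $z^{k+1}$ yields
\[k\,g_k=\sum_{j=1}^k j\,h_j\,g_{k-j}.\]
Now substitute Step 1 on both sides, namely $g_k=(1-\lambda^{k+1})h_{k+1}$ and $g_{k-j}=(1-\lambda^{k+1-j})h_{k+1-j}$. This is valid also for $j=k$, since $g_0=1=(1-\lambda)h_1$. The result is
\[k\,(1-\lambda^{k+1})\,h_{k+1}=\sum_{j=1}^k j\,h_j\,(1-\lambda^{k+1-j})\,h_{k+1-j}.\]

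\emph{Step 3.} Multiply this identity by $(k-1)!\,\pi_k$.

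On the left, $k!\,\pi_k(1-\lambda^{k+1})h_{k+1}=k!\,\pi_{k+1}h_{k+1}=b_{k+1}$.

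On the right, use $h_j=b_j/\bigl((j-1)!\,\pi_j\bigr)$ and $(1-\lambda^{k+1-j})h_{k+1-j}=b_{k+1-j}/\bigl((k-j)!\,\pi_{k-j}\bigr)$. The $j$-th term then becomes
\[j\,\frac{(k-1)!}{(j-1)!\,(k-j)!}\,\frac{\pi_k}{\pi_j\,\pi_{k-j}}\,b_jb_{k+1-j}=j\binom{k-1}{j-1}\gauss{k}{j}b_jb_{k+1-j},\]
which is exactly the announced recursion.

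Integrality of the $b_k$ follows by strong induction on $k$. Each term of the recursion is a product of an integer, a Gaussian polynomial in $\Z[\lambda]$, and polynomials $b_j,b_{k+1-j}$ with $1\le j,\,k+1-j\le k$, which lie in $\Z[\lambda]$ by the induction hypothesis.

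The only delicate point is the bookkeeping of indices and factorials in Step 3, in particular that the boundary case $j=k$, which uses $\pi_0=1$ and $0!=1$, fits the same formula. The manipulations of formal exponentials and derivatives are legitimate because $\lambda$ is neither $0$ nor a root of unity, so all the series involved are well defined.
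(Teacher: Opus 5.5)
Your proof is correct and is essentially the paper's argument. The paper writes $A_\lambda(z) = \bigl(H_\lambda(\lambda z)-H_\lambda(z)\bigr)/z = -\exp\bigl(H_\lambda(z)\bigr)$, which is your $-G_\lambda(z)/z$, and differentiates to get $A_\lambda' = H_\lambda' A_\lambda$; this is exactly your identity $zG_\lambda' = G_\lambda\,(1+zH_\lambda')$ after substituting $g_k = (1-\lambda^{k+1})h_{k+1}$, and the coefficient extraction and multiplication by $(k-1)!\,\pi_k$ then coincide.
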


\begin{proof}
Define
\[A_\lambda(z) = \frac{H_\lambda(\lambda z)- H_\lambda(z)}{z} = \sum_{k\geq 1} (\lambda^k-1) h_k(\lambda) z^{k-1}.\]
The linearizing equation 
$G_\lambda (\lambda z) = F_\lambda \circ G_\lambda(z)$ yields
\[\lambda z \exp \bigl( H_\lambda(\lambda z)\bigr) = \lambda z \exp \bigl( H_\lambda(z) \bigr) \exp\bigl(-z \exp \circ H_\lambda(z)\bigr),\]
so that 
\[H_\lambda(\lambda z) = H_\lambda(z) - z \exp \circ H_\lambda(z)\]
that is, 
\begin{equation}\label{eq:bexpa}
A_\lambda(z) = -\exp \bigl( H_\lambda(z)\bigr).
\end{equation}
Note that 
\[(\lambda-1)h_1(\lambda) = A_\lambda(0) = - \exp \bigl(H_\lambda(0)\bigr)=-1.\]
Thus, 
\[h_1(\lambda) = \frac{1}{1-\lambda} \quad \text{and}\quad b_1(\lambda) = 1.\]
In addition, differentiating Equation~\eqref{eq:bexpa} with respect to $z$ yields
\[A_\lambda'(z) = - H_\lambda'(z)  \exp \bigl( H_\lambda(z) \bigr)=H_\lambda'(z) A_\lambda(z).\]
It follows that
\[\sum_{k\geq 1} (\lambda^{k+1}-1) k h_{k+1}(\lambda) z^{k-1} = \left(\sum_{k\geq 1} k h_k(\lambda) z^{k-1}\right)\left(\sum_{k\geq 1} (\lambda^k-1)h_k(\lambda) z^{k-1}\right).\]
Hence, 
\[\forall k\geq 1,\quad (\lambda^{k+1}-1) kh_{k+1}(\lambda)  = \sum_{j=1}^k j h_j(\lambda)  (\lambda^{k+1-j}-1)h_{k+1-j}(\lambda),\]
so that 
\[
\forall k\geq 1,\quad  \frac{b_{k+1}}{(k-1)! \pi_k} = \sum_{j=1}^k j \frac{b_j}{(j-1)! \pi_j}  \frac{b_{k+1-j}}{(k-j)! \pi_{k-j}} .\]
As a consequence, 
\[\forall k\geq 1,\quad b_{k+1} =  \sum_{j=1}^k  j   \binom{k-1}{j-1} \gauss{k}{j} b_j b_{k+1-j}.\qedhere\]
\end{proof}

Fix an integer $n\geq 1$ and define 
\[\beta_n := \resultant(\Phi_n,b_n) .\]
Since $b_n\in \Z[\lambda]$ has integer coefficients,  $\beta_n\in \Z$ is an integer. The following proposition implies that 
\[\beta_n = \resultant(\Phi_n,-(n-1)!c_n).\]

\begin{prop}\label{prop:betac}
For every integer $n\geq 1$,
\[b_n + (n-1)!c_n  \equiv 0 \pmod{\Phi_n}.\]
\end{prop}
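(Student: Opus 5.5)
The plan is to reduce the statement to Lemma~\ref{lem:aqcq}, which gives $a_n + c_n \equiv 0 \pmod{\Phi_n}$ with $a_n = \pi_n g_n$. It then suffices to show that
\[(n-1)!\,a_n \equiv b_n \pmod{\Phi_n},\]
or equivalently that $(n-1)!\,a_n(\omega) = b_n(\omega)$ for every $\omega\in\Omega_n$. Since $b_n = (n-1)!\,\pi_n h_n$, this amounts to showing that
\[\lim_{\lambda\to\omega}\pi_n(\lambda)\bigl(g_n(\lambda)-h_n(\lambda)\bigr)=0.\]

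By Lemma~\ref{lem:gkhk}, $g_n-h_n$ is a polynomial with rational coefficients in $h_1,\dots,h_{n-1}$. Each $h_k = b_k/\bigl((k-1)!\,\pi_k\bigr)$ with $b_k\in\Z[\lambda]$ by Proposition~\ref{prop:recursiveexp}. For $k\le n-1$, the poles of $h_k$ lie among the $j$-th roots of unity with $j\le k<n$, so $h_k$ has no pole at a primitive $n$-th root of unity $\omega$. Hence $g_n-h_n$ is regular at $\omega$. Moreover $\pi_n(\omega)=0$, because the factor $1-\lambda^n$ vanishes there. Therefore $\pi_n(g_n-h_n)\to 0$ as $\lambda\to\omega$.

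Combining this with Lemma~\ref{lem:aqcq}, I would write
\[b_n(\omega) = (n-1)!\,\pi_n h_n\big|_{\omega} = (n-1)!\,a_n(\omega) = -(n-1)!\,c_n(\omega).\]
Here the values are understood as limits, which is legitimate because $a_n$ and $b_n$ are polynomials. Since $\Phi_n$ is monic with simple roots, vanishing of the polynomial $b_n+(n-1)!\,c_n$ on all of $\Omega_n$ gives divisibility by $\Phi_n$ in $\Q[\lambda]$. Because $\Phi_n$ is monic, the congruence then holds wherever it is meant to be interpreted.

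The only point that needs care, and which I expect to be the main obstacle, is checking that the rational coefficients in Lemma~\ref{lem:gkhk} cause no problems. They do not: the argument is purely pointwise evaluation at $\omega$, and integrality is never needed. I would also make sure the regularity claim for $h_k$ is justified independently of any circularity. This follows from the remark after Lemma~\ref{lem:gkhk}, or directly from $h_k = b_k/\bigl((k-1)!\,\pi_k\bigr)$: $\pi_k$ has no zero at $\omega$ when $k<n$.
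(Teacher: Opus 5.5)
Your proposal is correct and matches the paper's proof essentially step for step. You reduce via Lemma~\ref{lem:aqcq} to $b_n(\omega)=(n-1)!\,a_n(\omega)$ for $\omega\in\Omega_n$. You then use Lemma~\ref{lem:gkhk}, the regularity of $h_1,\dots,h_{n-1}$ at $\omega$, and $\pi_n(\omega)=0$ to show that $\pi_n(h_n-g_n)\to 0$ as $\lambda\to\omega$.
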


\begin{proof}
Fix $n\geq 1$ and $\omega \in \Omega_n$. It suffices to prove that  $b_n (\omega)+ (n-1)!c_n (\omega) = 0$. 
Define $a_n := \pi_n g_n$. By Lemma~\ref{lem:aqcq},  $a_n (\omega)+ c_n (\omega) = 0$. Thus, it is enough to show that $b_n(\omega) = (n-1)! a_n(\omega)$. 
From
\[ a_n= \pi_n g_n\quad \text{and}\quad b_n = (n-1)! \pi_n h_n,\]
we deduce that 
\[b_n(\omega) - (n-1)!a_n(\omega) =(n-1)!  \lim_{\lambda\to \omega} \pi_n(\lambda) \bigl(h_n(\lambda) - g_n(\lambda)\bigr).\]
By Lemma~\ref{lem:gkhk},
\[h_n-g_n \in \Q[h_1,\ldots,h_{n-1}].\]
The rational maps $h_1,\ldots,h_{n-1}$ are holomorphic near $\omega$ and $\pi_n(\lambda)\to  0$ as $\lambda\to \omega$. It follows that $b_n(\omega) = (n-1)!a_n(\omega)$ as required. 
\end{proof}

As a consequence, the polynomial $c_n$ does not vanish on $\Omega_n$ if and only if $b_n$ does not vanish on $\Omega_n$. 
In order to prove this latter result,  we will follow an approach of Broer, Sim\'o and Tatjer \cite[Proposition B.1]{bst}. 

\begin{prop}\label{prop:bst}
For every integer $k\geq 2$,
\[b_{k+1}  \equiv \frac{\pi_k}{\pi_1^k} \pmod{k}.\]
\end{prop}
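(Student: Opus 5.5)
The plan is to read the recursion of Proposition~\ref{prop:recursiveexp} combinatorially and then argue by a cyclic symmetry, in the spirit of \cite[Proposition B.1]{bst}.

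First I would unfold the recursion into a sum over trees. Since $\binom{k-1}{j-1}\,(j-1)!\,(k-j)! = (k-1)!$, the recursion is the $q$-analogue of the classical recursion for rooted labelled trees built from an exponential generating function. Iterating it should express $b_{k+1}$ as a sum over a finite set $\mathcal T_{k}$ of labelled structures on the label set $\ob1;k\cb$:
\[
b_{k+1}=\sum_{T\in\mathcal T_k} w_T(\lambda),\qquad w_T\in\Z[\lambda].
\]
The combinatorial coefficient $j\binom{k-1}{j-1}$ records which labels go to which subtree, together with a choice of attachment point. Each weight $w_T$ is a product of Gaussian polynomials $\gauss{m}{i}$, coming from the factors $\gauss{k}{j}$ at every splitting. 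I would check this on small cases: for $k=2$ it gives $b_3=(1+\lambda)+2$, and for $k=3$ it gives $b_4=[3]_\lambda(3+\lambda)+4[3]_\lambda+3b_3$, where $[m]_\lambda := (1-\lambda^m)/(1-\lambda)$. The check should also show which structures are degenerate. Here the degenerate structure is the one always taking the $j=1$ branch, i.e.\ the chain. Its weight is
\[
\prod_{i=1}^{k}\gauss{i}{1} = \prod_{i=1}^k [i]_\lambda = \frac{\pi_k}{\pi_1^k},
\]
which is exactly the claimed residue.

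Next I would let the cyclic group $\Z/k\Z$ act on $\mathcal T_k$ by rotating the labels $i\mapsto i+1 \pmod k$. The crucial requirements are that $w_T$ is invariant under this action and that the weight depends only on the unlabelled shape and the sizes of the splittings. Under these requirements the following holds. If $k$ is prime, every orbit has size $1$ or $k$. The size-$k$ orbits contribute multiples of $k$. The fixed structures reduce to the chain, after the contributions of the factors $j$ and $\binom{k-1}{j-1}$ are correctly absorbed. This gives $b_{k+1}\equiv \pi_k/\pi_1^k \pmod k$.

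For composite $k$ the orbits can have any size $d\mid k$, so orbit counting alone does not suffice. I would then refine the argument by using the exact splitting of labels. The relevant identity is
\[
j\binom{k-1}{j-1}=k\binom{k-1}{j-1}-(k-j)\binom{k-1}{j-1}=k\binom{k-1}{j-1}-(k-1)\binom{k-2}{j-1}.
\]
I would pair the terms $j$ and $k+1-j$ in the recursion, and then argue by strong induction on $k$, using the statement for the divisors of $k$ and for smaller values. The goal of this step is to show that the total contribution of all non-chain structures is divisible by $k$ in $\Z[\lambda]$.

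I expect the composite case to be the main obstacle, namely controlling orbits of size $d$ with $1<d<k$. The first thing I would try is a Burnside-type count. In it, the weight sum over the structures fixed by rotation by $k/d$ is expressed through $b_{d'+1}$ for smaller $d'$, and those values are known modulo $d'$ by induction. The $q$-weights would be handled with Lemma~\ref{lem:gausseuclid}: its congruence behaviour modulo the cyclotomic factors is what should allow fixed-structure contributions to combine into the product $\prod_{i\le k}[i]_\lambda$.
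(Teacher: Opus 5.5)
There is a genuine gap. The statement is needed for every $k\geq 2$, since it is applied with $k=n-1$ for all $n\geq 3$. Your argument only has a chance when $k$ is prime. For composite $k$ you list strategies rather than an argument, and orbit counting really does fail there. In a label-symmetric model (pointed set partitions of $\ob1;k\cb$, i.e.\ rooted forests), take $k=4$: the configuration $\{1^\ast,2\},\{3^\ast,4\}$ is fixed by the rotation $i\mapsto i+2$. Its orbit therefore has two elements and contributes $2\gauss{4}{2}$, which is not $0 \bmod 4$. Your identity $j\binom{k-1}{j-1}\equiv\binom{k-2}{j-1}\pmod k$ only rewrites the recursion as another sum in which the $b_j$ are needed modulo $k$. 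The induction hypothesis controls $b_j$ only modulo $j-1$, so the induction does not close.

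Even the prime case is not secured. The structures obtained by unfolding Proposition~\ref{prop:recursiveexp} remove a distinguished label at each step ($b_j$ and $b_{k+1-j}$ live on $j-1$ and $k-j$ labels). So rotating labels does not evidently act on them in a way that preserves the factors $\gauss{k}{j}$. For $k=3$, the non-chain part $6[3]_\lambda+3[2]_\lambda+6$ of $b_4$ splits among the branches as $2[3]_\lambda$, $4[3]_\lambda$ and $3b_3$. Any free $\Z/3\Z$-action would therefore have to mix branches. The invariance of $w_T$ that you list as a ``requirement'' is exactly the unproved point.

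The missing idea is to avoid the differentiated recursion and expand Equation~\eqref{eq:bexpa}, $A_\lambda=-\exp(H_\lambda)$, directly. Using $(\lambda^{k+1}-1)h_{k+1}=-b_{k+1}/(k!\,\pi_k)$, this gives
\[
b_{k+1}=\sum_{\sum_j jr_j=k}\frac{k!}{\prod_j\bigl((j-1)!\bigr)^{r_j}r_j!}\cdot\frac{\pi_k}{\prod_j\pi_j^{r_j}}\cdot\prod_j b_j^{r_j}.
\]
The Gaussian factor and the $b_j$ already lie in $\Z[\lambda]$, so no unfolding into trees is needed; only the integer coefficient matters. The term $r_1=k$ is $\pi_k/\pi_1^k$. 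For every other term, set $r:=r_2+\cdots+r_m\geq 1$; the coefficient then equals
\[
\frac{k!}{(k-r)!}\cdot\frac{1}{r_2!\cdots r_m!}\binom{k-r}{1,\ldots,1,\ldots,m-1,\ldots,m-1,r_1}.
\]
The second factor is an integer because permuting bins of equal positive size acts freely. The first factor $k!/(k-r)!$ is divisible by $k$. This gives divisibility by $k$ type by type, uniformly in $k$, with no orbit analysis. It is exactly what your Burnside step would have to reproduce.
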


\begin{proof}
%We keep the notation introduced in the proof of Proposition~\ref{prop:recursiveexp}. 
By Equation~\eqref{eq:bexpa},
\[\sum_{k\geq 0} (\lambda^{k+1}-1) h_{k+1}(\lambda) z^k = -\exp\left( \sum_{k\geq 1} h_k(\lambda) z^k\right)  = -\prod_{k\geq 1} \exp \bigl( h_k(\lambda) z^k\bigr).\]
It follows that 
\[\forall k\geq 0,\quad (\lambda^{k+1}-1) h_{k+1}(\lambda) = -\sum_{(r_1,\ldots,r_m)\atop \sum jr_j = k} \prod_{j=1}^{m} \frac{h_j^{r_j}(\lambda)}{r_j!},\]
where the sum is taken over all finite sequences $(r_1,\ldots r_m)$ such that $\ds \sum_{j=1}^m j r_j = k$. 
This may be rewritten as 
\[\forall k\geq 0,\quad b_{k+1} = \sum_{(r_1,\ldots,r_m)\atop \sum jr_j = k} \underset{\text{multinomial coefficient}}{\underbrace{\frac{k!}{(1!)^{r_2}\cdots \bigl((m-1)!\bigr)^{r_m}  r_1!\cdots r_m!}}} \underset{\text{Gaussian polynomial}}{\underbrace{\frac{\pi_k}{\pi_1^{r_1}\cdots \pi_m^{r_m}}}}b_1^{r_1}\cdots b_m^{r_m}.\]
The term corresponding to $(r_1,0,\ldots,0)$ yields the contribution $\ds \frac{\pi_{k}}{\pi_1^{k}}$. We claim that the other multinomial coefficients are divisible by $k$. Indeed, in that case set $r:=r_2+\cdots + r_m\in \ob 1;k\cb$. Hence, 
\[ \frac{k!}{(1!)^{r_2}\cdots \bigl((m-1)!\bigr)^{r_m} \ r_1!\cdots r_m!} =\underset{\text{divisible by }k}{\underbrace{\frac{k!}{(k-r)!}}} \frac{1}{r_2!\cdots r_m!} \binom{k-r}{\underset{r_2}{\underbrace{1,\ldots,1}},\ldots,\underset{r_m}{\underbrace{m-1,\ldots,m-1}},r_1}.\]
This last multinomial coefficient is equal to the number of ways of distributing $k-r$ distinct objects in:
\begin{itemize}
\item $r_2$ bins containing $1$ object,
\item \ldots 
\item $r_m$ bins containing $m-1$ objects and
\item $1$ bin containing $r_1$ objects. 
\end{itemize}
Since we can permute the $r_2$ bins containing $1$ object, \ldots, and the $r_m$ bins containing $m-1$ objects, the last multinomial coefficient  is divisible by $r_2!\cdots r_m!$.
\end{proof}

\begin{proof}[Arithmetic proof of Corollary~\ref{cor:exp}]
Let $n\geq 3$ be an integer. 
Observe that $\ds \frac{\pi_{n-1}}{\pi_1^{n-1}}$ is a product of cyclotomic polynomials $\Phi_j$ with $j\in \ob1;n-1\cb$. For such an integer $j$, the resultant of $\Phi_j$ and $\Phi_n$ is either equal to $1$ or a power of a prime divisor of $n$. Since $n$ and $n-1$ are coprime, 
\[\beta_n \equiv \resultant\left( \Phi_n,\frac{\pi_{n-1}}{\pi_1^{n-1}}\right) \not\equiv 0 \pmod{n-1}.\]
Hence, for every $n\geq 3$, the polynomial $b_n$, and thus the polynomial $c_n$, does not vanish on $\Omega_n$. 
Since $c_1 = -1$ and $c_2 = -1$ neither vanish on $\Omega_n$, this completes the arithmetic proof of Corollary~\ref{cor:exp}. 
\end{proof}

\begin{proof}[Proof of Proposition~\ref{prop:beta}]
A simple computation yields $\beta_1 = \beta_2 = 1$. Let $n\geq 3$ be an integer. As noted above, $\beta_n$ is an integer. 
In addition, 
\[\resultant( \Phi_n,\pi_1)  =  \begin{cases} 1&\text{if }n \text{ is not a prime power},\\
p&\text{if } n\text{ is a power of the prime }p\end{cases}  \quad \text{and}\quad  \resultant(\Phi_n,\pi_{n-1}) = n^{\varphi(n)}.\]
It follows that when $n$ is not a prime power, then  $\beta_n\equiv 1 \pmod{n-1}$ and if $n$ is a power of the prime $p$, then $p^{n-1}\beta_n\equiv 1 \pmod{n-1}$.
\end{proof}

\section{Unicritical polynomials}

In this section, $d\geq 2$ is an integer. Consider the family $\{F_\lambda\}_{\lambda\in \C^\ast}$ of unicritical polynomials of degree $d$ defined by 
\[F_\lambda(z) := \frac\lambda{d^2} \bigl(1- (1-dz)^d\bigr).\]
Given $n\in \N^\ast$, let  $c_n\in \Z[\lambda]$ be the polynomial with integer coefficients defined by 
\[c_n:= c_{n,n}\quad \text{with}\quad F_\lambda^{\circ n}(z) = \lambda^n z \sum_{k\geq 0} c_{n,k}(\lambda) z^k.\]
It follows from Theorem~\ref{theo:fatou} that $c_n$ does not vanish on $\Omega_n$. We will now give an arithmetic  proof of this result. 

For this purpose, consider the family of linearizing maps $G_\lambda$ defined in \S\ref{sec:lin}: 
\[G_\lambda(z) = z \sum_{k\geq 0} g_k(\lambda) z^k \quad \text{with}\quad g_0:=1\quad \text{and}\quad G_\lambda (\lambda z) = F_\lambda \circ G_\lambda(z).\]
By \S\ref{sec:lin}, since the coefficients of the polynomial $F_1(z)$  are integers, 
\[a_n:= \pi_n g_n\in \Z[\lambda].\]  
In addition, by Lemma~\ref{lem:aqcq}, 
\[\forall n\geq 1,\quad a_n + c_n \equiv 0\pmod{\Phi_n}.\]
Therefore, Corollary~\ref{cor:quad} is equivalent to proving that for $n\geq 1$, the polynomial $a_n$ does not vanish on $\Omega_n$, i.e., that $a_n$ and $\Phi_n$ are coprime. 
The proof relies on the Bang-Zsigmondy Theorem \cite{ba,z}, and we first need to deal with exceptional cases $n\in \{1,2\}$ for arbitrary degrees,  or $n=6$ for $d=2$. 

\subsection{The case $n=1$}

Observe that 
\[F_1(z) = z - \frac{d(d-1)}{2} z^2+\O(z^3) .\]
It follows that 
\[c_1 = - \frac{d(d-1)}{2} \neq 0.\]

\subsection{The case $n=2$}

Observe that 
\[F_{-1}^{\circ 2}(z) = z - \frac{d^2(d-1)(d+1)}{6} z^3 + \O(z^4) .\]
It follows that 
\[c_2 =  - \frac{d^2(d-1)(d+1)}{6}\neq 0.\]

\subsection{The case $d=2$ and $n=6$}

Let us temporarily assume that $d=2$, so that $F_\lambda(z) = \lambda z(1-z)$. Further assume that $n=6$. A computer-assisted computation yields
\[\resultant(\Phi_6,c_6) = 10128 = 2^4\cdot 3\cdot 211.\]
It follows that $c_6$ and $\Phi_6$ are coprime.

\subsection{A $p$-adic proof that $a_n$ and $\Phi_n$ are coprime\label{sec:padic}}

Throughout this proof,  $d\geq 2$ and $n\geq 3$ are fixed integers. If $d=2$, then we assume that $n\neq 6$. 

By the Bang-Zsigmondy Theorem, $d^n-1$ has a prime divisor $p$ which does not divide any $d^k-1$ for $k<n$. Let $p$ be such a prime number and set $\F_p := \Z/p\Z$. 
Since $d^k \not\equiv 1\pmod{p}$ for $k<n$ and since $d^n \equiv 1 \pmod{p}$, the order of $d$ in $\F_p^\times$ is equal to $n$, so that $n$ divides the cardinality of $\F_p^\times$, namely $p-1$. In particular, $p\geq n+1$. 

Equip $\Q$ with its $p$-adic norm $|\cdot|_p$ and let $\Qp$ be its completion. 
Since $\Phi_n(\lambda)\in \Z[\lambda]$ is monic and irreducible over $\Q$ and since $a_n(\lambda)\in \Z[\lambda]$, if $a_n$ and $\Phi_n$ have a common zero in $\C$, then $\Phi_n$ divides $a_n$ in $\Z[\lambda]$, whence every root of $\Phi_n$ in $\Qp$ is also a root of $a_n$. 

By assumption, 
\[\bigl|\pi_{n-1}(d)\bigr|_p =1 ,\quad \bigl|\pi_n(d)\bigr|_p  = |d^n-1|_p<1 \quad \text{and}\quad \bigl|\pi_n'(d)\bigr|_p =1.\]
Indeed, $\pi_n$ has simple roots in $\overline \F_p$ since $n$ does not divide $p$. If we had $\bigl|\pi_n'(d)\bigr|_p<1$, then $d$ would be a multiple root of $\lambda^n-1\in \F_p[\lambda]$, thus a root of $(\lambda^n-1)' = n\lambda^{n-1}\in \F_p[\lambda]$; however $p$ does not divide $nd^{n-1}$. 

\begin{remark}
It seems that we usually have $|d^n-1|_p = \frac{1}{p}$. However, we may have $|d^n-1|_p\leq \frac{1}{p^2}$. This is the case when $d=2$, $p=3511$ is the second Wieferich prime, and $n=1755$ is the order or $2$ in $(\Z/p\Z)^\times$:
\[ v_{3511}(2^{1755}-1) =2.\]
\end{remark}

By the Hensel Lemma, $\pi_n$ has a unique root $\omega\in \Qp$ in the open ball of radius $1$ centered at $d$, and 
\[\left|\omega - d + \frac{\pi_n(d)}{\pi'_n(d)} \right|_p \leq  \left|\frac{\pi_n(d)}{\pi'_n(d)} \right|_p^2 = |d^n-1|_p^2 .\]
In particular, 
\[|\omega-d|_p = |d^n-1|_p\quad \text{and}\quad |\omega|_p=1.\]
In addition, since $\pi_{n-1}\in \Z[\lambda]$ and $|\omega|_p=1$, 
\[\bigl|\pi_{n-1}(\omega) - \pi_{n-1}(d)\bigr|_p \leq |\omega- d|_p<1, \quad \text{so that}\quad \bigl|\pi_{n-1}(\omega)\bigr|_p = \bigl|\pi_{n-1}(d)\bigr|_p=1.\]
In particular, $\pi_{n-1}(\omega)\neq 0$, whence $\Phi_n(\omega) = 0$. It is therefore sufficient to prove that $a_n(\omega)\neq 0$.

The polynomial
\[P_n(\lambda) := a_n(\lambda) - a_n(d) - a_n'(d) (\lambda-d)\in \Z[\lambda]\] 
satisfies $P_n(d) = 0$ and $P_n'(d) = 0$, so that $P_n(\lambda) \in (\lambda-d)^2\Z[\lambda]$.  Since $|\omega|_p = 1$, it follows  that  
\[\bigl|P_n(\omega)\bigr|_p \leq |\omega-d|_p^2 =  |d^n-1|_p^2.\]
Thus, 
\[\left| a_n(\omega) - a_n(d) + a_n'(d)  \frac{\pi_n(d)}{\pi'_n(d)} \right|_p \leq   |d^n-1|_p^2 .\]

By definition,  $a_n = \pi_n g_n$. It follows that $a'_n = \pi_n' g_n + \pi_n g'_n$, so that 
\[a_n(d) - a_n'(d)  \frac{\pi_n(d)}{\pi'_n(d)}  = - \frac{\pi_n^2(d)}{\pi_n'(d)} g_n'(d).\]

\begin{lemma}
We have that
\[ \bigl|g_n'(d)\bigr|_p  \geq \frac{1}{|d^n-1|_p}.\]
\end{lemma}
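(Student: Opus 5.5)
The approach is to compute $g_n'(d)$ in closed form, by differentiating the linearizing equation $G_\lambda(\lambda z)=F_\lambda\circ G_\lambda(z)$ with respect to $\lambda$ at $\lambda=d$. Everything rests on the fact that $F_d$ is conjugate to the power map. With $u_\lambda(z):=1-dG_\lambda(z)$, the equation becomes
\[u_\lambda(\lambda z)=1-\frac{\lambda}{d}\bigl(1-u_\lambda(z)^d\bigr).\]
At $\lambda=d$ this reads $u_d(dz)=u_d(z)^d$. Since $g_0=1$ forces $u_d(z)=1-dz+\O(z^2)$, uniqueness of the linearizer gives $u_d(z)=\e^{-dz}$, that is, $G_d(z)=(1-\e^{-dz})/d$. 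This makes sense because $d$ is not a root of unity.

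Set $K:=\partial_\lambda G_\lambda|_{\lambda=d}=\sum_{k\geq 1}g_k'(d)z^{k+1}$ and $U:=\partial_\lambda u_\lambda|_{\lambda=d}=-dK$. Note that $U=\O(z^2)$. Differentiating the displayed equation in $\lambda$ at $\lambda=d$ and substituting $u_d=\e^{-dz}$ gives
\[z\,u_d'(dz)+U(dz)=-\frac{1-\e^{-d^2z}}{d}+d\,\e^{-d(d-1)z}\,U(z).\]
Next substitute $U(z)=\e^{-dz}V(z)$. The exponential factors all become $\e^{-d^2z}$, and the equation collapses to the linear difference equation
\[V(dz)-dV(z)=dz-\frac{\e^{d^2z}-1}{d}.\]
Writing $V=\sum_{m\geq 2}v_mz^m$ and comparing coefficients gives $(d^m-d)v_m=-d^{2m-1}/m!$ for $m\geq 2$; the $m=1$ coefficients cancel. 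In particular,
\[v_{n+1}=-\frac{d^{2n-1}}{(n+1)!\,(d^n-1)},\qquad v_m=-\frac{d^{2m-2}}{m!\,(d^{m-1}-1)}\quad (2\leq m\leq n).\]

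Finally, $K=-\frac1d\e^{-dz}V(z)$, so extracting the coefficient of $z^{n+1}$ gives
\[g_n'(d)=-\frac1d\sum_{m=2}^{n+1}v_m\frac{(-d)^{n+1-m}}{(n+1-m)!}.\]
The estimate then reduces to $p$-adic bookkeeping, using $p\nmid d$, $p\geq n+1$, and $|d^{j}-1|_p=1$ for $1\leq j<n$. For $2\leq m\leq n$, every factorial appearing in the $m$-th term has index at most $n<p$, and $d^{m-1}-1$ is a $p$-adic unit. Hence these terms have $|\cdot|_p\leq 1$.

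The $m=n+1$ term equals $v_{n+1}/d$ up to sign and has absolute value $\bigl(|(n+1)!|_p\,|d^n-1|_p\bigr)^{-1}\geq |d^n-1|_p^{-1}>1$. The extra factor $|(n+1)!|_p^{-1}$ only helps, which covers the case $p=n+1$. By the ultrametric inequality, $|g_n'(d)|_p$ equals the absolute value of this dominant term, which is at least $1/|d^n-1|_p$.

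The main obstacle is finding the substitution $U=\e^{-dz}V$ that decouples the $\lambda$-derivative equation. A naive coefficient recursion for $K$ runs into denominators like $(n+1)!$ in the lower-order terms, and those can be divisible by $p$. The closed form avoids this: every denominator other than $d^n-1$ is visibly a $p$-adic unit, and the only delicate factorial appears in the dominant term, where it helps.
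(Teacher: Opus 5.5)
Your proof is correct and is essentially the paper's argument. The paper also starts from $G_d(z)=(1-\e^{-dz})/d$ and perturbs to first order in $\lambda-d$ to get a difference equation whose coefficients are $\kappa_k/(d^k-1)$. It then concludes by the ultrametric inequality, with the single dominant term $j=n$. Your $V$ is exactly $-d\eta$ in the paper's notation; the paper obtains the factor $\e^{-dz}$ as $G_d'$ through the factorization $G_\lambda=G_d\circ H_\lambda$, where you find it as an integrating factor.

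One harmless slip: your own recursion gives $v_{n+1}=-d^{2n}/\bigl((n+1)!(d^n-1)\bigr)$, not $d^{2n-1}$ in the numerator. This does not affect the estimate, since $|d|_p=1$.
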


\begin{proof}
Recall that $G_\lambda(z)$ is the unique formal power series which satisfies  
\[G_\lambda(\lambda z) = F_\lambda \circ G_\lambda(z)\quad \text{and}\quad  G_\lambda(z) = z \sum_{k \geq  1} g_k(\lambda) z^k \quad \text{with} \quad g_0(\lambda) = 1. \]
In addition, the polynomial $F_d$ fixes the critical point $\frac{1}{d}$, thus is conjugate to  $z^d$, and we have  
\[ G_d(z) = \frac{1 -\e^{-d z}}{d} , \quad \text{so that} \quad \forall k \geq 0, \quad g_k(d) = \frac{(-d)^k}{(k +1)!}. \]
In order to compute $g_k'(d)$ for $k\geq 1$, let us study the variation of $G_\lambda$ as $\lambda$ varies in a neighborhood of $d$. 
Let $K_\lambda$ and $H_\lambda$ be defined by: 
\[G_d\circ K_\lambda = F_\lambda \circ G_d\quad \text{and}\quad G_\lambda =  G_d\circ H_\lambda ,\]
so that $G_d$ conjugates $K_\lambda$ to $F_\lambda$ and $H_\lambda$ linearizes $K_\lambda$: 
\[\xymatrix@C=2cm@R=2cm{
\dto_{z\mapsto \lambda z} \rto_{H_\lambda} \ar@/^1pc/[rr]^{G_\lambda}& \dto^{K_\lambda} \rto_{G_d} & \dto^{F_\lambda} \\ 
\rto^{H_\lambda} \ar@/_1pc/[rr]_{G_\lambda} & \rto^{G_d} & 
}\]
Therefore,
\[K_{d+\eps} (z)  = d \bigl( z + \eps\kappa(z) + \O(\eps^2)\bigr) \quad \text{with}\quad \kappa(z)\in z\Q[[z]].\]
Note that 
\[
G_d\circ K_{d+\eps}(z)   =  G_d\bigl(dz +\eps d \kappa(z) + \O(\eps^2)\bigr)
 =G_d(dz) + \eps d G_d'(dz) \kappa(z) + \O(\eps^2),
\]
and 
\[ F_{d +\eps} \circ G_d(z) =   \frac{d +\eps}{d} F_d \circ G_d(z) = \left(1+\frac{\eps}{d}\right) G_d(dz) .\] 
Equating the terms of order $\eps$ yields 
\[ \kappa(z) = \frac{G_d(dz)}{d^2 G_d'(dz)} = \frac{\e^{d^2 z} -1}{d^3} = z\sum_{k\geq 0} \kappa_k z^k\quad \text{with}\quad \kappa_k =  \frac{d^{2k-1}}{(k+1)!} .\]
Observe that 
\[\forall k\in \ob0;n-1\cb \quad |\kappa_k|_p = 1\quad \text{and}\quad |\kappa_n|_p\geq 1.\]
Let us now identify $\eta(z) \in z^2\Q[[z]]$ such that  
\[ H_{d+\eps}(z)  = z +\eps \eta(z) +\O(\eps^2) .\] 
Equating the terms of order $\varepsilon$ in the equality
\[H_{d+\eps}\bigl((d+\eps)z\bigr) = K_{d+\eps}\circ H_{d+\eps}(z),\]
we easily get the relation 
\[ z+ \eta(d z) = d \eta(z) +d \kappa(z).\] 

\begin{remark}
This equation expresses the fact that if $M_d$ is the linear map $z\mapsto dz$, then we have the following equality of vector fields:
\[\frac{z}{d} \partial_z + M_d^*\bigl(\eta(z)\partial_z\bigr) = \eta(z) \partial_z + \kappa(z)\partial_z.\] 
\end{remark}

Writing
\[\eta(z)  = z\sum_{k\geq 1}  \eta_k z^k,\]
we deduce that
\[\forall k\geq 1,\quad d^{k+1} \eta_k-d\eta_k =d \kappa_k \quad\text{so that}\quad \eta_k = \frac{\kappa_k}{d^k-1}.\]
Observe that 
\[\forall k\in \ob0;n-1\cb \quad  |\eta_k|_p =  |\kappa_k|_p = 1\quad \text{and}\quad |\eta_n|_p= \frac{ |\kappa_n|_p}{|d^n-1|_p}\geq \frac{1}{|d^n-1|_p}.\]
Finally, since $G_d'(z) = \e^{-d z}$ and 
$H_{d+\eps}(z) = z +\eps \eta(z) +\O(\eps^2)$, 
\[ G_{d +\eps}(z) = G_d \circ H_{d+\eps}(z) = G_d(z) +\eps \e^{-d z} \eta(z) +\O( \eps^2 ), \] 
which yields
\[ z \sum_{k \geq  1} g_k'(d) z^{k} = \e^{-d z} \eta(z) .\] 
As a consequence, equating the coefficients yields
\[ \forall k \geq 1, \quad g_k'(d) =\sum_{j=1}^k \frac{ (-d)^{k-j}}{(k-j)!}\eta_j.\]
Note that if $j<n<p$, then 
\[\left|\frac{ (-d)^{n-j}}{(n-j)!}\eta_j\right|_p = |\eta_j|_p = 1.\]
In addition, 
\[\left|\frac{ (-d)^{n-n}}{(n-n)!}\eta_n\right|_p = |\eta_n|_p  \geq   \frac{1}{|d^n-1|_p} .\]
Since there is only one term of norm greater than $1$ in the sum, we deduce that 
\[ \bigl|g_n'(d)\bigr|_p =  |\eta_n|_p \geq \frac{1}{|d^n-1|_p}.\qedhere\]
\end{proof}

It follows that
\[ \left|a_n(d) - a_n'(d)  \frac{\pi_n(d)}{\pi'_n(d)} \right|_p  = \left|\frac{\pi_n^2(d)}{\pi_n'(d)} g_n'(d) \right|_p \geq  |d^n-1|_p.\]
As a consequence, 
\[\bigl|a_n(\omega)|_p \geq   |d^n-1|_p.\]
This proves that $a_n(\omega)\neq 0$, which completes our arithmetic proof. 

\begin{remark}
The previous computations show that 
\[\forall k\geq 1,\quad g_k'(d) = \sum_{j=1}^k  \left(\frac{(-1)^{k-j}}{(j+1)!(k-j)! }\  \frac{d^{k+j-1}}{d^j-1}\right).\]
\end{remark}

\begin{remark}
The previous argument shows that $a_n(\omega)$ belongs to the closed ball of radius  $|d^n-1|_p^2$ centered at 
\[\frac{d^{2n-1}}{(n+1)!}\ \frac{\pi_{n-1}(d) \pi_n(d)}{\pi'_n(d)}.\]
\end{remark}

\begin{remark}
Note that 
\[\bigl|a_n(d)\bigr|_p = \bigl|\pi_n(d)g_n(d)\bigr|_p = \left|\frac{d^n-1}{n+1}\right|_p\geq |d^n-1|_p\quad \text{and}\quad 
 \left|a_n'(d)  \frac{\pi_n(d)}{\pi'_n(d)} \right|_p\leq |d^n-1|_p.\]
Consequently,
\[ \bigl|a_n(\omega)\bigr|_p = \left|a_n(d) - a_n'(d)  \frac{\pi_n(d)}{\pi'_n(d)} \right|_p  = \bigl|a_n(d)\bigr|_p.\]
It may happen that $p=n+1$, in which case $\bigl|a_n(\omega)\bigr|_p> |d^n-1|_p$; for example, when $d=2$ and $n\in \{2,4,10,12,18,\ldots\}$ (but not when $n=6$ or $n=16$).
\end{remark}

\section{The quadratic family}

We now restrict our study to the quadratic case $d=2$, i.e., to the family $\{F_\lambda\}_{\lambda\in \C^\ast}$ of quadratic polynomials defined by 
\[F_\lambda(z) := \lambda z (1-z).\]
Fix $n\in \N^\ast$. 
As before, let $c_n\in \Z[\lambda]$ be the polynomial defined by 
\[c_n:= c_{n,n}\quad \text{with}\quad F_\lambda^{\circ n}(z) = \lambda^n z \sum_{k\geq 0} c_{n,k}(\lambda) z^k.\]
We continue to denote by $G_\lambda$  the linearizing maps  defined in \S\ref{sec:lin}: 
\[G_\lambda(z) = z \sum_{k\geq 0} g_k(\lambda) z^k \quad \text{with}\quad g_0:=1\quad \text{and}\quad G_\lambda (\lambda z) = F_\lambda \circ G_\lambda(z).\]
Yoccoz \cite{yoccoz} observed that the polynomials $a_k:= \pi_k g_k \in\Z[\lambda]$ satisfy the recursion relation
\begin{equation}\label{eq:recursion}
a_0=1 \quad \text {and} \quad a_{k+1}=\sum_{j=0}^{k}\gauss{k}{j}a_j a_{k-j}.
\end{equation}
This follows immediately from Equation~\eqref{eq:lin}. The first few polynomials $a_k$ are:
\begin{itemize}
\item $a_1=1$; 
\item $a_2=2$; 
\item $a_3=\lambda+5$;
\item $a_4 = 2(2\lambda^2 + 3\lambda + 7)$;
\item $a_5 = 2(3\lambda^4 + 8\lambda^3 + 14\lambda^2 + 14\lambda + 21)$;
\item $a_6 = 4(\lambda^7 + 8\lambda^6 + 12\lambda^5 + 27\lambda^4 + 32\lambda^3 + 37\lambda^2 + 30\lambda + 33)$.
%\item $a_7 = \lambda^{11} + 27\lambda^{10} + 89\lambda^9 + 204\lambda^8 + 327\lambda^7 + 536\lambda^6 + 660\lambda^5 + 811\lambda^4 + 756\lambda^3 + 705\lambda^2 + 495\lambda + 429$.
\end{itemize}

As observed by Yoccoz, denoting by $k\geq 1$ the unique integer such that $2^k\leq n+1<2^{k+1}$, 
\[\deg a_n = \frac{n(n+1)}{2}- (k+1)(n+1) + 2^{k+1} -1\]
 and the leading coefficient is $\ds \binom{2^k}{n+1-2^k}$.

\subsection{Resultants}

Set
\[\alpha_n:=\resultant(\Phi_n,a_n).\]
By Lemma~\ref{lem:aqcq}, $a_n+c_n \equiv 0\pmod{\Phi_n}$, so that 
\[\alpha_n:=\resultant(\Phi_n,-c_n).\]
The first values of $\alpha_n$ are listed in Table~\ref{table}.

\begin{table}[htbp]
\begin{tabular}{|c|l|c|}
\hline$n$ & $\alpha_n$ & $\frac{1}{n} \alpha_n \pmod{n}$ \\
\hline 1 & ${\color{blue} 1}$ & 0 \\
\hline 2 & ${\color{blue} 2} $ & 1 \\
\hline 3 & ${\color{blue} 3} \cdot 7$ & 1 \\
\hline 4 & ${\color{blue} 4} \cdot 2\cdot 17$ & 2 \\
\hline 5 & ${\color{blue} 5} \cdot 2^4\cdot 11\cdot 31^2$ & 1 \\
\hline 6 & ${\color{blue} 6} \cdot 2^3\cdot 211$ & 2 \\
\hline 7 & ${\color{blue} 7} \cdot 43^2\cdot 127^2\cdot 987211$ & 1 \\
\hline 8 & ${\color{blue} 8} \cdot 2^6\cdot 17^2\cdot 257\cdot 12073$ & 0 \\
\hline 9 & ${\color{blue} 9} \cdot 2^6\cdot 3\cdot 19\cdot 73^2\cdot 1362336515767$ & 3 \\
\hline 10 & ${\color{blue} 10} \cdot 2^{11}\cdot 31\cdot 41\cdot 11261\cdot 33311$ & 8 \\
\hline 11 & ${\color{blue} 11} \cdot 2^{10}\cdot 23\cdot 89^2\cdot 683^2\cdot 233113\cdot 224105472053\cdot 510582069251$ & 1 \\
\hline 12 & ${\color{blue} 12} \cdot 26429903762132911872$ & 0 \\
\hline 13 & ${\color{blue} 13} \cdot 2^{24}\cdot 3^3\cdot 2731^2\cdot 8191^2\cdot 207914839156014043\cdot 35904310679791841984399$ & 1 \\
\hline
\end{tabular}
\caption{The resultants $\alpha_n$ for $n \in \ob 1;13 \cb$.\label{table}}
\end{table}

A first observation is that $n$ divides $\alpha_n$ for every integer $n\geq 1$.  The following result is proved in \S\ref{sec:valuation} as a corollary of Proposition~\ref{prop:valuation} below.

\begin{prop}\label{prop:divn}
 For all $n \in \N^\ast$,  $\alpha_n \equiv 0 \pmod{n}$.
 \end{prop}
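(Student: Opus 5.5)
The idea is to prove the divisibility one prime at a time. Write
\[\alpha_n=\prod_{\omega\in\Omega_n}a_n(\omega)=N_{\Q(\omega)/\Q}\bigl(a_n(\omega)\bigr).\]
Fix a prime $p$ with $p^e\,\|\,n$ and write $n=mp^e$ with $p\nmid m$. It suffices to show $v_p(\alpha_n)\geq e$. We have $v_p(\alpha_n)=\sum_{\mathfrak p\mid p}f_{\mathfrak p}\,v_{\mathfrak p}\bigl(a_n(\omega)\bigr)$, where $\mathfrak p$ runs over the primes of $\Z[\omega]$ above $p$. Each such $\mathfrak p$ has ramification index $\varphi(p^e)$, and $\omega\equiv\zeta$ modulo the prime above $p$ in $\Z[\zeta,\omega]$, where $\zeta$ is a primitive $m$-th root of unity. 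So we must show that $a_n(\omega)$ is divisible by a high enough power of $1-\omega^{m}$, which has valuation $1$ at each $\mathfrak p$. Since $p=(1-\omega^m)^{\varphi(p^e)}\cdot(\text{unit})$ locally, this amounts to $v_{\mathfrak p}\bigl(a_n(\omega)\bigr)\geq e$ in the normalization $v_{\mathfrak p}(1-\omega^m)=1$.

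The tool is the recursion \eqref{eq:recursion}, $a_{k+1}=\sum_j\gauss{k}{j}a_ja_{k-j}$, combined with Lemma~\ref{lem:gausseuclid}. In the sanity case $m=1$, i.e.\ $n=p^e$, we have $\omega\equiv1$ mod $(1-\omega)$. The recursion specialises at $\lambda=1$ to $a_{k+1}(1)=\sum_j\binom kj a_j(1)a_{k-j}(1)$, so $a_k(1)=k!$. Hence $a_n(\omega)\equiv n!\equiv0$, which already gives $v_{\mathfrak p}\geq1$.

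To get up to $e$, I would refine this into a statement modulo $(1-\omega^m)^{i}$ proved by induction. I would first try the following. Expand $\gauss{k}{j}(\lambda)$ around $\zeta$ in powers of $(\lambda-\zeta)$. Lemma~\ref{lem:gausseuclid} gives the leading behaviour: a factor $\binom{\lfloor k/m\rfloor}{\lfloor j/m\rfloor}$ times $\gauss{s}{r}(\zeta)$, or a simple zero. This suggests that for $\lambda$ near $\zeta$ the recursion behaves like a ``binomial'' recursion in the quotients $\lfloor k/m\rfloor$, with solution roughly $\lfloor k/m\rfloor!$ times an $m$-periodic unit factor. The target is then a congruence of the shape
\[a_k(\omega)\equiv \lfloor k/m\rfloor!\;u_k \pmod{(1-\omega^m)^{\,\cdot}}\]
with $u_k$ controlled, applied at $k=n=mp^e$, where $\lfloor n/m\rfloor!=(p^e)!$ has large $p$-adic valuation. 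In practice I would set this up as the $p$-adic valuation estimate announced as Proposition~\ref{prop:valuation}. Divisibility by $n$ then follows by multiplying over the primes $p\mid n$.

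The main obstacle is the bookkeeping of higher-order terms. Lemma~\ref{lem:gausseuclid} only controls $\gauss kj$ at $\zeta$ itself, whereas we need $\gauss kj(\omega)$ with $\omega-\zeta$ of positive $\mathfrak p$-valuation. The simple-zero terms (case $r>s$) contribute multiples of $1-\omega^m$, and these must be shown not to spoil the accumulated valuation. I would handle this by proving a joint inductive statement on the pair $\bigl(a_k(\omega)\bmod(1-\omega^m)^{i}\bigr)_{k}$ for all $k\leq n$ simultaneously, using the $q$-Lucas factorisation of Lemma~\ref{lem:gausseuclid} to separate the $\lfloor\cdot/m\rfloor$ part from the residue part.
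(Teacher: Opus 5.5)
Your local reduction is correct, but the proposal stops where the proof has to start. The only bound you actually establish is $v_{\mathfrak p}\bigl(a_n(\omega)\bigr)\geq 1$. For $m=1$ this comes from $a_n(1)=n!$. For general $m$ it follows the same way from $\omega\equiv\zeta\pmod{\mathfrak p}$ and Proposition~\ref{prop:yoccoz}, which gives $a_{mp^e}(\zeta)=a_m(\zeta)^{p^e}\prod_{k=0}^{p^e-1}(km+1)\in p\,\Z[\zeta]$. Summing over $\mathfrak p$, this yields only $v_p(\alpha_n)\geq\sum_{\mathfrak p}f_{\mathfrak p}=\varphi(m)$. That is insufficient whenever $\varphi(m)<e$, e.g.\ for $n=4,8,9$.

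The step that should raise the bound from $1$ to $e$ is left as an unspecified ``joint inductive statement'', and the heuristic behind it fails beyond first order. Lemma~\ref{lem:gausseuclid} and Proposition~\ref{prop:yoccoz} describe values at $\lambda=\zeta$ only, and the discrepancy $a_n(\omega)-a_n(\zeta)$ is not a harmless correction. Take $n=p\geq 3$ prime, so $m=1$. Then $a_p(1)=p!$ has $v_{\mathfrak p}(p!)=p-1\geq 2$, while $a_p'(1)\equiv 1\pmod p$ by Lemma~\ref{lem:simpleroot}. Hence $a_p(\omega)\equiv a_p'(1)(\omega-1)\pmod{(1-\omega)^2}$ has valuation exactly $1$. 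So no congruence $a_k(\omega)\equiv\lfloor k/m\rfloor!\,u_k$ with $u_k$ a unit can hold modulo $(1-\omega^m)^2$. Transporting the identity at $\zeta$ to nearby $\lambda$ is precisely the step the paper says it cannot justify in Yoccoz's argument (\S\ref{sec:valueroots}). Falling back on Proposition~\ref{prop:valuation} is circular, since that is exactly what must be proved.

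The missing idea is that the divisibility is forced by the dynamics, not by the recursion \eqref{eq:recursion}. The paper applies the Morton--Patel factorisation (Theorem~\ref{theo:mp}) over $\Z[\lambda]$ to $P(z)=\lambda z-\lambda z^2$. It writes $\lambda^nC_n-1=D_nE_n$ with $D_d(\lambda,0)=\Phi_d$ and $E_n(\lambda,0)=\Psi_n:=\prod_{d\mid n,\,d<n}\Phi_d$. From $F_\omega^{\circ n}(z)=z+\O(z^{n+1})$ it deduces that $\Phi_n$ divides $d_{n,k}$ for $k<n$. Hence $\lambda^nc_n\equiv d_{n,n}\Psi_n\pmod{\Phi_n}$, so $\alpha_n=\resultant(\Phi_n,\Psi_n)\cdot\resultant(\Phi_n,-d_{n,n})$ with the second factor in $\Z$. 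Apostol's formula (Lemma~\ref{lem:valres}) then gives $v_p\bigl(\resultant(\Phi_n,\Psi_n)\bigr)=\varphi(m)p^{e-1}\geq e$. In your local language: $a_n(\omega)$ is divisible in $\Z[\omega]$ by $\Psi_n(\omega)=n\omega^{n-1}/\Phi_n'(\omega)$. Its $\mathfrak p$-valuation is $v_{\mathfrak p}(n)$ minus the exponent of the different, namely $p^{e-1}$. No information about the particular values $a_k(\omega)$ is needed.
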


Given an integer $k\in \Z$ and  a prime number $p\geq 2$, let $v_p(k)$ be the $p$-adic valuation of $k$, i.e., the largest  $\nu\in \N\cup \{\infty\}$ such that $p^{-\nu} k\in \Z$. Likewise, given $P\in \Z[\lambda]$, let $v_p(P)$ be the largest $\nu\in \N\cup \{\infty\}$ such that $p^{-\nu} P\in \Z[\lambda]$. 
%\begin{conjecture}
%Assume  $n\geq 2$ is an integer and $p\geq 2$ is a prime factor of $\alpha_n$. Then, 
%\begin{itemize}
%\item either $p$ divides $n$; 
%\item or $p>n$ and $p \equiv 1\pmod{n}$; 
%\item or $p<n$ and $p^{v_p(\alpha_n)} \equiv 1 \pmod{n}$. 
%\end{itemize}
%\end{conjecture}
Recall that $\varphi$ is the Euler totient function, so that the degree of $\Phi_n$ is $\varphi(n)$.

\begin{prop}\label{prop:valuationi}
For  $n\in \N^\ast$, if $p$ is a prime factor of $n$, then $\ds v_p(\alpha_n) \geq  \frac{\varphi(n)}{p-1}$. 
\end{prop}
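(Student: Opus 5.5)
The plan is to compute $v_p(\alpha_n)$ prime-by-prime in a cyclotomic field and reduce the statement to a single divisibility for each root. Write $n = p^r m$ with $p\nmid m$ and $r\geq 1$, and set $q := n/p$. For $\omega\in\Omega_n$, the element $\xi := \omega^{q}$ is a primitive $p$-th root of unity. Since $a_n\in\Z[\lambda]$ and $\Phi_n$ is monic,
\[\alpha_n = \resultant(\Phi_n,a_n) = \prod_{\omega\in\Omega_n} a_n(\omega) = N_{\Q(\omega)/\Q}\bigl(a_n(\omega)\bigr).\]
Extend $v_p$ to $\Q(\omega)$ with $v_p(p)=1$ and use $v_p(1-\xi) = \frac{1}{p-1}$. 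Since Galois conjugation permutes $\Omega_n$ and preserves the ideal generated by $1-\omega^{q}$, it suffices to prove the key claim
\[a_n(\omega)\in (1-\omega^{q})\,\Z[\omega]\quad\text{for every }\omega\in\Omega_n.\]
Summing the resulting bound $v_p\bigl(a_n(\omega)\bigr)\geq \frac{1}{p-1}$ over the $\varphi(n)$ conjugates then gives $v_p(\alpha_n)\geq \frac{\varphi(n)}{p-1}$.

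To prove the key claim, I will pass to a polynomial congruence. The ring map $\Z[\lambda]\to \Z[\omega]/(1-\omega^q)$ kills $\lambda^q-1$, and it also kills $p$, because $p$ is divisible by $(1-\xi)^{p-1}$ in $\Z[\xi]$. Hence it is enough to show
\[a_n \in \bigl(p,\ \lambda^{q}-1\bigr)\Z[\lambda],\qquad\text{i.e.}\qquad a_n\equiv 0 \ \text{in}\ \F_p[\lambda]/(\lambda^{q}-1).\]
I would attack this through Yoccoz's recursion~\eqref{eq:recursion}, $a_{k+1}=\sum_{j}\gauss{k}{j}a_ja_{k-j}$, by first establishing a $q$-Lucas type reduction for Gaussian polynomials modulo $(p,\lambda^q-1)$. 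This is the analogue of Lemma~\ref{lem:gausseuclid}, which gives the values at a primitive $q$-th root of unity, but now carried out in characteristic $p$. In characteristic $p$ one has $1-\lambda^{q}=(1-\lambda^{m p^{r-2}})^{p}$ when $r\geq 2$, and $1-\lambda^q=1-\lambda^m$ when $r=1$. The Frobenius identity then lets one factor $\pi_k$ into blocks, and I expect
\[\gauss{k}{j}\equiv \binom{\lfloor k/q\rfloor}{\lfloor j/q\rfloor}\gauss{k\bmod q}{j\bmod q},\]
with the right-hand side vanishing when $j\bmod q> k\bmod q$. Writing $n-1 = (p-1)q + (q-1)$ and feeding this into the recursion for $a_n$, the terms should regroup into binomial factors $\binom{p-1}{i}$ times lower-order sums. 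I would then show, by induction on the block index, that the sum collapses to a multiple of $p$, or to a telescoping expression that vanishes modulo $\lambda^q-1$.

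As a fallback for the key claim, I would use Lemma~\ref{lem:aqcq}: $a_n(\omega)=-c_n(\omega)$, together with $F_\omega^{\circ n}=(F_\omega^{\circ q})^{\circ p}$. Modulo $(1-\xi)$ the map $T:=F_\omega^{\circ q}$ is tangent to the identity, and the claim becomes the statement that the $z^{n+1}$ coefficient of $T^{\circ p}$ vanishes in characteristic $p$. Here one uses that the $p$-fold iterate of a map tangent to the identity has coefficients given by polynomials in $p$ with controlled denominators, plus the commutation of $T$ with $F_\omega$, whose multiplier has order $q$ modulo $(1-\xi)$; as in the parabolic-multiplicity argument this kills all coefficients of $T-z$ in degrees not $\equiv 0 \pmod q$.

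The main obstacle is the key claim itself: proving that $a_n$ lies in the ideal $(p,\lambda^q-1)$. The reduction to it and the norm and valuation bookkeeping are routine. In the recursion route, the difficulty is controlling the denominators of the binomial factors and showing that the cross terms cancel modulo $\lambda^q-1$, not merely modulo $\Phi_q$. I would first test the claim against the table, for instance $n=4$, $p=2$ with $v_2(\alpha_4)=3\geq 2$, and $n=9$, $p=3$ with $v_3(\alpha_9)=3\geq 3$.
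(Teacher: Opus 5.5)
\textbf{There is a genuine gap: the key claim is never proved, and that claim is the whole content of the proposition.}

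Your reduction is correct. Since $\alpha_n=N_{\Q(\omega)/\Q}\bigl(a_n(\omega)\bigr)$ and $N_{\Q(\omega)/\Q}(1-\omega^q)=p^{\varphi(n)/(p-1)}$, the key claim $a_n(\omega)\in(1-\omega^q)\Z[\omega]$ does imply the proposition. Neither of your two routes establishes it.

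In the recursion route, you first replace the claim by $a_n\in(p,\lambda^q-1)\Z[\lambda]$, which is strictly stronger than needed. The kernel of $\Z[\lambda]\to\Z[\omega]/(1-\omega^q)$ is $(\Phi_n,\lambda^q-1)$, and its image in $\F_p[\lambda]$ is generated by $\Phi_m^{p^{r-1}}$. You instead demand divisibility by $(\lambda^m-1)^{p^{r-1}}$, i.e.\ high-order vanishing at every $m$-th root of unity. More seriously, the $q$-Lucas congruence you expect is false modulo $(p,\lambda^q-1)$. Take $n=6$, $p=3$, $q=2$, $k=3$, $j=1$. The left side is $\gauss{3}{1}=1+\lambda+\lambda^2\equiv 2+\lambda$, the right side is $\binom{1}{0}\gauss{1}{1}=1$, and $1+\lambda$ is nonzero in $\F_3[\lambda]/(\lambda^2-1)$ (evaluate at $\lambda=1$). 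The $q$-Lucas property of Lemma~\ref{lem:gausseuclid} holds only at primitive $q$-th roots of unity, i.e.\ modulo $\Phi_q$.

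The characteristic-$p$ fallback is also only a sketch. For $r\geq 2$ the ring $\Z[\omega]/(1-\omega^q)$ is not reduced. The elements $\omega^k-1$ with $m\mid k$ and $q\nmid k$ are nonzero nilpotents there, so the commutation argument does not kill those coefficients of $T-z$. And the behaviour of $p$-fold iterates in characteristic $p$ is exactly what would have to be proved.

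The missing idea is to stay in characteristic zero and use that $T(z)-z$ divides $T^{\circ p}(z)-z$. Put $T:=F_\omega^{\circ q}\in\Z[\omega][z]$ and $\xi:=\omega^q$. We have $T^{\circ p}(z)-z=\sum_{i=0}^{p-1}\bigl(T^{\circ i}(T(z))-T^{\circ i}(z)\bigr)$, and $x-y$ divides $P(x)-P(y)$ for every polynomial $P$. Hence $F_\omega^{\circ n}(z)-z=\bigl(T(z)-z\bigr)Q(z)$ with $Q\in\Z[\omega][z]$.

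Now $T(z)-z=(\xi-1)z+\O(z^2)$ with $\xi\neq 1$, while $F_\omega^{\circ n}(z)-z=c_n(\omega)z^{n+1}+\O(z^{n+2})$. Comparing lowest-order terms in the domain $\Z[\omega]$ forces $Q=\O(z^n)$ and $c_n(\omega)=(\xi-1)q_n$, where $q_n\in\Z[\omega]$ is the coefficient of $z^n$ in $Q$. Combined with Lemma~\ref{lem:aqcq}, this is exactly your key claim.

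The paper's proof is a refined version of the same mechanism. Using the Morton--Patel factorization $\lambda^nC_n-1=\prod_{d\mid n}D_d$, it shows $\Phi_n\mid d_{n,k}$ for $k<n$, hence $\lambda^nc_n\equiv d_{n,n}\Psi_n\pmod{\Phi_n}$. It then reads off $v_p\bigl(\resultant(\Phi_n,\Psi_n)\bigr)=\varphi(n)/(p-1)$ from Apostol's resultant formula. The factor $1-\omega^q$ you were after comes from the divisor $\lambda^q-1=\prod_{d\mid q}\Phi_d$ of $\Psi_n$.
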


Here is a list of conjectures suggested by our computations of the resultants $\alpha_n$ for $n\in \ob1;243\cb$. 
Our first observation is that in many cases, if $n$ is a multiple of a prime $p\geq 3$, then  
\[v_p(\alpha_n) = \frac{\varphi(n)}{p-1}.\]
However, there are exceptions. 

\begin{table}[htbp]
\begin{tabular}{|c|l|}
\hline$p$ & $m$ \\
\hline 3 & 5;\ 13;\ 15;\ 20;\ 39;\ 45;\ 60 \\
\hline 5 & 24  \\
\hline 7 &  3;\ 16; \ 21\\
\hline 11 & 2;\ 5;\ 22 \\
\hline 13 &  $12$ \\
\hline 17 &  4;\ 8 \\
\hline 19 & 9\\
\hline 31 & 5 \\
\hline
\end{tabular}
\caption{List of values of $p$ and $m$ with $mp\leq 243$, for which $v_p(\alpha_{mp}) \neq \dfrac{\varphi(mp)}{p-1}$.\label{exceptions}}
\end{table}

It seems that those exceptions do not occur when $n$ is power of $p$.

\begin{conjecture}\label{conj:resprimepower}
If $p\geq 3$ is a prime number and $n = p^r$ for some integer $r\geq 1$, then 
\[v_p(\alpha_n) = \ds \frac{\varphi(n)}{p-1} = p^{r-1}\quad \text{and}\quad \frac{\alpha_n}{p^{v_p(\alpha_n)}} \equiv 1 \pmod{n}.\] 
\end{conjecture}

We prove  Conjecture~\ref{conj:resprimepower} when $r=1$ (see \S\ref{sec:prime}).  
The situation for $p=2$ is slightly different. This may be due to the fact that we are studying polynomials of degree $2$. 
Set 
\[\nu_n := v_2\bigl(a_n(0)\bigr) .\]
We shall see that $a_n(0)$ is the Catalan number $\frac{(2n)!}{n!(n+1)!}$, so that  
$1+\nu_n$ is the sum of digits in the binary expansion of $n+1$. We prove in \S\ref{sec:hatan} that $v_2(a_n) = \nu_n$, so that the polynomial 
\[\hat a_n := 2^{-\nu_n} a_n\] has integer coefficients. 
Note that the constant coefficient of $\hat a_n$ is odd. Set 
\[\hat \alpha_n := \resultant(\Phi_n,\hat a_n) = 2^{-\nu_n\varphi(n)} \alpha_n \quad \text{so that}\quad v_2(\alpha_n) = v_2(\hat \alpha_n) + \nu_n\varphi(n).\] 

%\begin{conjecture}
%If $n\in \N$ is an odd integer or if $n+2$ is a power of $2$,  then $v_2(\hat \alpha_n)=0$. 
%\end{conjecture}
%
%\begin{conjecture}
%If $n\in \N$ is such that $n \equiv 2\pmod{4}$ and $n+2$ is not a power of $2$, then $v_2(\hat \alpha_n)=\varphi(n)$. 
%\end{conjecture}
%
%Those conjectures are particular cases of the following one. 

\begin{conjecture}
If $n = 2^rk\in \N$ with $r\in \N$ and $k$ odd, then $\varphi(k)$ divides $v_2(\hat \alpha_n)$. Additionally,
\begin{enumerate}
\item if $r=0$ then $v_2(\hat \alpha_n)=0$; 
\item if $r=1$ then $\ds v_2(\hat \alpha_n)=\begin{cases} 0&\text{if }k=2^s-1\text{ with }s\in \N^\ast\\ \varphi(k) &\text{otherwise};\end{cases}$ 
\item \label{conj:2r(2s-1)} if $r\in \N$ and $k=2^s-1$ with $s\in \N^\ast$, then $v_2(\hat \alpha_n) = \bigl((r-2)2^{r-1}+1\bigr)  \varphi(k)$.
\end{enumerate}
\end{conjecture}

We prove Case  \eqref{conj:2r(2s-1)} of this conjecture  for $r=0$ in \S\ref{sec:alphanr} and for $r=1$ in \S\ref{sec:2(2s-1)}. In both cases, the conjecture asserts that  $v_2(\hat \alpha_n) =0$, i.e., that $\hat\alpha_n$ is odd.

%
%
%\begin{conjecture}
%If $r\in \N^\ast$, then  $v_2(\alpha_{2^r})=(r-1)2^{r-1}+1$. 
%\end{conjecture}
%
%\begin{conjecture}
%If $r\in \N^\ast$, then  $v_2(\alpha_{2^r\cdot 3})=r2^r+2$. 
%\end{conjecture}

\subsection{The polynomial $\hat a_n$\label{sec:hatan}}

We will now prove that $\hat a_n(\lambda) \in \Z[\lambda]$. Before, we need some preliminary results. 

\begin{prop}
For $n \in \N$, the integer $a_n(0)=g_n(0)$ is the $n$-th Catalan number
\[
a_n(0)=g_n(0)=\frac{(2 n)!}{n!(n+1)!}.
\]
\end{prop}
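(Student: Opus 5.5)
Setting $\lambda=0$ in the Yoccoz recursion \eqref{eq:recursion} reduces the statement to the Catalan recursion. The first equality $a_n(0)=g_n(0)$ is immediate: $a_n=\pi_n g_n$ and $\pi_n(0)=\prod_{j=1}^n(1-0)=1$. Since $a_n\in\Z[\lambda]$, evaluating at $0$ makes sense. (One can also read $g_n(0)$ directly: at $\lambda=0$ the linearization equation $G_0(0\cdot z)=F_0\circ G_0(z)$ degenerates, so passing to $a_n$ is the cleaner route.)

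The main step is to evaluate the Gaussian polynomials at $\lambda=0$. We have
\[\gauss{k}{j}(0)=\frac{\pi_k(0)}{\pi_j(0)\pi_{k-j}(0)}=1\quad\text{for all } 0\leq j\leq k,\]
because each $\pi_m(0)=1$. This is legitimate: $\gauss{k}{j}\in\Z[\lambda]$ is a polynomial, and the quotient $\pi_k/(\pi_j\pi_{k-j})$ has a nonvanishing denominator at $0$, so its value at $0$ is the ratio of the values. Substituting $\lambda=0$ into \eqref{eq:recursion} then gives
\[a_0(0)=1,\qquad a_{k+1}(0)=\sum_{j=0}^k a_j(0)\,a_{k-j}(0).\]
This is exactly the Segner recursion satisfied by the Catalan numbers $C_n=\frac{(2n)!}{n!(n+1)!}$, with the same initial value $C_0=1$. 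Induction on $n$ gives $a_n(0)=C_n$. As a sanity check, $a_1,a_2,a_3,a_4$ take the values $1,2,5,14$ at $\lambda=0$, matching the list of $a_k$ above.

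The only point needing care, if one wants the argument self-contained, is that the Catalan numbers do satisfy this recursion. The standard generating-function proof does it: $C(z)=\sum C_nz^n$ satisfies $C=1+zC^2$, so
\[C(z)=\frac{1-\sqrt{1-4z}}{2z},\]
and expanding $\sqrt{1-4z}$ by the binomial series yields $C_n=\frac{1}{n+1}\binom{2n}{n}=\frac{(2n)!}{n!(n+1)!}$. Equivalently, one can compare the generating function of $a_n(0)$ directly with this closed form; either way there is no genuine obstacle.
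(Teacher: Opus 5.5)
Your proof is correct and is essentially the paper's first proof: evaluating the recursion \eqref{eq:recursion} at $\lambda=0$, where every Gaussian polynomial equals $1$, gives the Catalan recursion with $a_0(0)=1$. The paper also gives a second proof. It lets $\lambda\to 0$ in $G_\lambda(\lambda z)/\lambda = F_1\circ G_\lambda(z)$, which gives $G_0(z)=\frac{1-\sqrt{1-4z}}{2}$ directly, and this matches the generating-function closed form you mention.
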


We give a first proof here. We give a second proof in \S\ref{sec:an0}. 

\begin{proof}[First proof]
 Since $\gauss{n}{j}(0)=1$, 
\[
a_{0}(0)=1 \quad \text {and} \quad a_{n+1}(0)=\sum_{j=0}^n a_j(0) a_{n-j}(0),
\]
which is the recursion formula of Catalan numbers. 
\end{proof}

It follows that  $\nu_n := v_2\bigl(a_n(0)\bigr)$ is the $2$-adic valuation of the $n$-th Catalan number. A result of Alter and Kubota \cite{ak} asserts that this $2$-adic valuation is 
\[\nu_n = \sigma_2(n+1) -1\]
where  $\sigma_2:\N\to \N$ is the function defined by 
\[\sigma_2\left( \sum_{j=0}^m \kappa_j 2^j\right)  := \sum_{j=0}^m \kappa_j\quad \text{when}\quad \kappa_j \in \{0,1\}\text{ for every }j\in \ob 0;m\cb.\] 
In other words, $\sigma_2(k)$ is the sum of the digits in the binary expansion of $k$. This is also the number of occurrences of the digit $1$ in the binary expansion of $k$. 

Multiplying an integer $k$ by $2$ preserves the sum of digits in the binary expansion of $k$ (one only adds a digit $0$), and thus 
\[\forall k\in \N, \quad \sigma_2(2k) = \sigma_2(k).\]
The following result is due to Kummer  \cite{k}. 

\begin{lemma}
 For every $k,m\in \N$,
 \[\sigma_2(k) + \sigma_2(m) =\sigma_2(k+m) + \sum \eps_j,\] 
 where $\eps_j$ denote the carries occurring in the base-2 addition of $k$ and $m$.
 \end{lemma}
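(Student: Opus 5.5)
We prove Kummer's identity by induction on the number of binary digits, following the schoolbook addition algorithm. Write $k=\sum_{j\ge 0}\kappa_j 2^j$ and $m=\sum_{j\ge0}\mu_j 2^j$ with digits $\kappa_j,\mu_j\in\{0,1\}$, only finitely many nonzero. Define the carries recursively by $\eps_{-1}:=0$ and, for $j\geq 0$,
\[\kappa_j+\mu_j+\eps_{j-1} = s_j + 2\eps_j,\qquad s_j,\eps_j\in\{0,1\}.\]
This is possible since the left side lies in $\ob0;3\cb$, so $s_j$ is its parity and $\eps_j$ its integer half. Since the digits eventually vanish, $\eps_j=0$ and $s_j=0$ for $j$ large.

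The first step is to check that $\sum_j s_j 2^j = k+m$. Multiply the defining relation by $2^j$ and sum over $j$. The left side gives $k+m+\sum_{j} \eps_{j-1}2^j$. The right side gives $\sum_j s_j2^j+\sum_j \eps_j 2^{j+1}$. The two carry sums coincide after the shift $j\mapsto j+1$ (using $\eps_{-1}=0$), so they cancel. Since every $s_j\in\{0,1\}$, the $s_j$ are exactly the binary digits of $k+m$, and hence $\sigma_2(k+m)=\sum_j s_j$.

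The second step is to sum the defining relation over $j$ without the weights $2^j$. This gives
\[\sigma_2(k)+\sigma_2(m)+\sum_j\eps_{j-1} = \sigma_2(k+m)+2\sum_j \eps_j.\]
Since $\sum_j \eps_{j-1}=\sum_j\eps_j$, this simplifies to $\sigma_2(k)+\sigma_2(m)=\sigma_2(k+m)+\sum_j\eps_j$, which is the claim.

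The argument is elementary. The only point requiring care is that the $\eps_j$ defined by the recursion really are the carries of base-$2$ addition and that the $s_j$ really are the digits of $k+m$. The first step settles this by uniqueness of binary expansion. All sums are finite, so the telescoping of the carry terms is legitimate.
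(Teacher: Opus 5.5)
Your proof is correct and is essentially the paper's argument: you write the digitwise carry relations $\kappa_j+\mu_j+\varepsilon_{j-1}=s_j+2\varepsilon_j$, sum them without weights, and telescope the carry terms. Your extra first step, which uses the weighted sum and uniqueness of binary expansions to check that the $s_j$ are the digits of $k+m$, is a detail the paper takes for granted; also, despite your opening sentence, the argument uses no induction.
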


\begin{proof}
Let $k\in \N$ and $m\in \N$ be two integers and set $r := k+m\in \N$. Consider the binary expansions  
\[k = \sum_{j\geq 0} \kappa_j 2^j, \quad m = \sum_{j\geq 0} \mu_j 2^j\ \quad \text{and}\quad r = \sum_{j\geq 0} \rho_j 2^j\quad \text{with}\quad \kappa_j,\mu_j,\rho_j\in \{0,1\}\text{ for every }j.\]
The sums are in fact finite sums. Let $\eps_j$ denote the carries occurring in the base-2 addition of $k$ and $m$: 
\[\kappa_0 + \mu_0 = \rho_0 + 2 \eps_0\quad\text{and}\quad \forall j\geq 0,\quad \kappa_{j+1} +  \mu_{j+1} + \eps_j = \rho_{j+1} + 2 \eps_{j+1}.\]
Therefore, 
\[\sigma_2(k) + \sigma_2(m) - \sigma_2(r) =  \sum_{j\geq 0} (\kappa_j + \mu_j - \rho_j )= 2 \eps_0 + \sum_{j\geq 0} (2\eps_{j+1} - \eps_j) = \sum_{j\geq 0} \eps_j.\qedhere\]
\end{proof} 

In particular,  
\[\forall k\in \N,\quad \forall m\in \N, \quad \sigma_2(k) + \sigma_2(m) \geq \sigma_2(k+m).\]
We may now prove that $\hat a_n(\lambda)\in \Z[\lambda]$ for every $n\in \N$. 

\begin{prop}
For every $n\in \N$,  $v_2(a_n) = \nu_n := v_2\bigl(a_n(0)\bigr)$. 
\end{prop}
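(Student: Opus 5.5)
Since $a_n(0)$ is a coefficient of $a_n$, we get $v_2(a_n)\le \nu_n$ for free. The work is the reverse inequality: $2^{\nu_n}$ divides every coefficient of $a_n$, i.e. $v_2(a_n)\geq \sigma_2(n+1)-1$. I will prove this by strong induction on $n$, using the recursion \eqref{eq:recursion}
\[a_{k+1}=\sum_{j=0}^{k}\gauss{k}{j}a_j a_{k-j}.\]
The base case $a_0=1$ holds with $\nu_0=\sigma_2(1)-1=0$.

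For the inductive step, take $n=k+1$ and assume $v_2(a_m)\geq \sigma_2(m+1)-1$ for all $m\leq k$. Since $\gauss{k}{j}\in\Z[\lambda]$ and the valuation $v_2$ on $\Z[\lambda]$ is additive under products (Gauss's lemma: the content is multiplicative), each term satisfies
\[v_2\Bigl(\gauss{k}{j}a_ja_{k-j}\Bigr)\geq \sigma_2(j+1)+\sigma_2(k-j+1)-2.\]
By the Kummer lemma applied to $(j+1)+(k-j+1)=k+2$, we have $\sigma_2(j+1)+\sigma_2(k+1-j)=\sigma_2(k+2)+\sum\eps$. Each term therefore has valuation at least $\sigma_2(k+2)-2+(\text{number of carries})$. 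The target is $\sigma_2(k+2)-1$, so any term whose addition $(j+1)+(k+1-j)$ produces at least one carry is already fine.

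The main obstacle is the terms with no carry, which fall one short. Two observations handle them. First, the terms $j$ and $k-j$ are identical (this uses $\gauss{k}{j}=\gauss{k}{k-j}$), so they pair up and the pair sum gains a factor $2$, except when $j=k-j$. Second, in the self-paired case $j=k/2$ we have $j+1=k-j+1$, so adding the two equal numbers produces a carry at their lowest set bit; that term is already fine. Hence every zero-carry term comes in a pair $\{j,k-j\}$ with $j\neq k-j$ and contributes $2\gauss{k}{j}a_ja_{k-j}$, whose valuation is at least $\sigma_2(k+2)-1$.

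Summing all contributions gives $v_2(a_{k+1})\geq \sigma_2(k+2)-1=\nu_{k+1}$. This completes the induction, so $v_2(a_n)=\nu_n$ and $\hat a_n\in\Z[\lambda]$.
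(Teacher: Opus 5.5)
Your proof is correct and is essentially the paper's argument: induction via the recursion~\eqref{eq:recursion}, grouping the equal terms $j$ and $k-j$ to gain a factor $2$, and treating the middle term $j=k/2$ separately, all controlled by $\nu_m=\sigma_2(m+1)-1$ and Kummer's carry lemma. The differences are cosmetic. The paper pairs every off-diagonal term without first sorting terms by whether a carry occurs, and it bounds the middle term by $2\nu_{k+1}$ using $\sigma_2(2m+2)=\sigma_2(m+1)$ rather than by counting carries.
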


\begin{proof}
We proceed by induction. We have $\hat a_{0}(\lambda) = 1$ so that $v_2(a_0) = 0 = \nu_0$. Now, assume that $n \geq 0$ and that $v_2(a_k) = \nu_k$ for every $k \in \ob0;n\cb$. The recursion formula~\eqref{eq:recursion} for the polynomials $a_k$ yields
\[a_{n+1} = \begin{cases} 2 \sum\limits_{j = 0}^{m} \gauss{n}{j} a_{j} a_{n -j}& \text{if } n=2m+1 \text{ is odd}\\ 
2 \sum\limits_{j = 0}^{m -1} \gauss{n}{j} a_j a_{n -j} +\gauss{n}m a_m^2 & \text{if } n=2m \text{ is even}. \end{cases}\] 
%The Gaussian polynomials take the value $1$ at $0$, and so their $2$-adic valuations are all equal to $0$. 
For every $j \in \ob0;m\cb$, the induction hypothesis yields
\[v_2\left( 2 \gauss{n }{j} a_j a_{n -j} \right) \geq  1 + \nu_j +\nu_{n -j} = \sigma_2(j +1) +\sigma_2(n -j+1) -1 \geq \sigma_2(n+2) -1 = \nu_{n+1}.\] 
In addition, if $n=2m$ is even, then 
\[v_2\left( \gauss{n}{m} a_m^2 \right) \geq  2 \nu_m= 2 (\sigma_2(m+1) -1) = 2 (\sigma_2(2m+2) -1) = 2(\sigma_2(n+2)-1) = 2\nu_{n+1}\geq \nu_{n+1}.\] 
It follows that $v_2(a_{n+1})\geq \nu_{n+1}$. Since $v_2\bigl(a_{n+1}(0)\bigr) = \nu_{n+1}$, we necessarily have $v_2(a_{n+1}) = \nu_{n+1}$. 
\end{proof}

We will now prove a few results regarding the resultants $\alpha_n$. 

\subsection{When $n+1$ is a power of $2$\label{sec:alphanr}}

\begin{prop}\label{prop:alphanr}
Assume $n +1 = 2^s$ for some integer $s\geq 1$. Then, $\alpha_n$ is odd. 
%If $s\in \N^\ast$, then $\alpha_{2^s-1}$ is odd. 
\end{prop}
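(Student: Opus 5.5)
The plan is to reduce the whole question modulo $2$. Since $\Phi_n$ is monic,
\[\alpha_n=\resultant(\Phi_n,a_n)=\prod_{\omega\in\Omega_n}a_n(\omega).\]
Because $n=2^s-1$ is odd, $\Phi_n$ has no repeated roots modulo $2$. Its roots therefore lift to $\varphi(n)$ roots of unity $\omega$ of exact order $n$ in the ring of integers of an unramified extension of $\Q_2$, namely the Teichmüller lifts of the generators of $\F_{2^s}^\times$. For such $\omega$ and every $1\le j<n$ we have $\omega^j\not\equiv1$ modulo the maximal ideal, so $1-\omega^j$ is a $2$-adic unit. It therefore suffices to show that $a_n(\omega)$ is a $2$-adic unit for each of these $\omega$. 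Equivalently, the reduction $\bar a_n\in\F_2[\lambda]$ should not vanish at any element of order $n$ in $\F_{2^s}^\times$.

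The key step is to simplify the recursion~\eqref{eq:recursion} modulo $2$. In $a_{k+1}=\sum_{j=0}^k\gauss{k}{j}a_ja_{k-j}$ the terms for $j$ and $k-j$ coincide, because $\gauss{k}{j}=\gauss{k}{k-j}$. Hence modulo $2$:
\begin{itemize}
\item if $k$ is odd, then $a_{k+1}\equiv0$;
\item if $k=2m$, then $a_{2m+1}\equiv\gauss{2m}{m}a_m^2\pmod 2$.
\end{itemize}
Applying the second case with $m=2^{i}-1$, and starting from $a_1=1$, an induction on $i$ gives
\[a_{2^s-1}\equiv\prod_{i=1}^{s-1}\gauss{2^{i+1}-2}{2^{i}-1}^{2^{s-1-i}}\pmod 2.\]
(This is consistent with $\nu_n=\sigma_2(2^s)-1=0$, i.e.\ $a_n(0)$ being odd.)

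It then remains to check that each Gaussian polynomial $\gauss{N}{K}$ with $N=2^{i+1}-2<n$ takes a unit value at $\omega$. I will use Lemma~\ref{lem:gausseuclid} with $q=n$. Since $K\le N<n$, the Euclidean divisions are trivial, with $j=m=0$, $r=K$ and $s=N$, so $r\le s$. The lemma then shows that $\omega$ is not a root and
\[\gauss{N}{K}(\omega)=\frac{\pi_N(\omega)}{\pi_K(\omega)\pi_{N-K}(\omega)}.\]
Alternatively one can argue directly: every factor $1-\omega^j$ here has $1\le j\le N<n$, hence is a $2$-adic unit. So each $\gauss{N}{K}(\omega)$ is a unit, and by the congruence above so is $a_n(\omega)$. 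The product $\alpha_n$ of these units is an integer with $v_2(\alpha_n)=0$, i.e.\ $\alpha_n$ is odd.

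The only delicate point is justifying the passage between $\Z[\lambda]$ modulo $2$ and evaluation at the $2$-adic lifts $\omega$. This works because $\omega$ is integral and the congruence $a_n\equiv\prod(\cdots)$ holds coefficientwise, so it survives evaluation modulo the maximal ideal. The mod-$2$ collapse of the recursion is routine, and I expect no real obstacle there.
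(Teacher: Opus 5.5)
Your proposal is correct and follows the paper's proof: the same mod-$2$ collapse $a_{2m+1}\equiv\gauss{2m}{m}a_m^2$ gives the same product of Gaussian polynomials $\gauss{2^{i+1}-2}{2^i-1}$ for $a_{2^s-1}$, all with top index less than $n$. The only difference is the final step. You show directly that $\pi_N(\omega)$ is a $2$-adic unit at the lifts of the primitive $n$-th roots of unity, whereas the paper writes the Gaussian polynomials as products of $\Phi_m$ with $m<n$ and invokes Apostol's theorem that $\resultant(\Phi_m,\Phi_n)$ is $1$ or a power of a prime dividing the odd number $n$.
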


\begin{proof}
Consider the sequence $(n_s)_{s\geq 0}$ defined by $n_s := 2^s-1$. 
For every $s\geq 0$, $n_{s+1}-1 = 2 n_s$, so that 
\[a_{n_{s+1}} = a_{2n_s+1} = \sum_{k=0}^{2 n_s} \gauss{2n_s}{k} a_k a_{2n_s-k} =  \gauss{2n_s}{n_s} a_{n_s}^2 + 2 \sum_{k=0}^{n_s-1} \gauss{2n_s}{k} a_k a_{2n_s-k}.\]
It follows that 
\[\forall s\geq 0,\quad a_{n_{s+1}} \equiv \gauss{2n_s}{n_s} a_{n_s}^2 \pmod{2}.\]
Since $a_0 = 1$, we deduce that 
\begin{equation}\label{eq:anr}
\forall s\geq 0,\quad a_{n_s} \equiv \prod_{k=0}^{s-1}  \gauss{2n_k}{n_k}^{2^{s-k-1}} \pmod{2}.
\end{equation}
For every $k\in \ob 0;s-1\cb$, the Gaussian polynomial $ \gauss{2n_k}{n_k}$ is a (possibly empty) product of cyclotomic polynomials $\Phi_m$ with $m\in \ob 1;2n_{s-1}\cb$. In that case,  $m\leq 2n_{s-1} = n_s-1$ and the resultant of $\Phi_m$ and $\Phi_{n_s}$ is equal to $1$ or the power of a prime divisor of $n_s$. Since $n_s$ is odd, this resultant is odd. It follows that $\alpha_{n_s}$ is a product of odd numbers, thus an odd number. 
\end{proof}

\subsection{When $n+1$ is the sum of two powers of $2$}

\begin{prop}
Assume $n +1 = 2^r+2^s$ for some integers $r>s\geq 1$. Then, $\hat \alpha_n$ is odd. 
%Assume $r>s\geq 1$ are integers and $n = 2^r+2^s-1$. Then, $v_2(\alpha_n) = \varphi(n)$. 
\end{prop}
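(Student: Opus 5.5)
The plan is to redo the mod-$2$ reduction used in the proof of Proposition~\ref{prop:alphanr}. This time we first divide by $2^{\nu_n}$, and the aim is to show that $\hat a_n$ is congruent modulo $2$ to a product of Gaussian polynomials of index $<n$.

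\textbf{Setup.} Here $n=2^r+2^s-1$, so $\sigma_2(n+1)=2$ and $\nu_n=1$, hence $\hat a_n=a_n/2$. Since $n-1=2(2^{r-1}+2^{s-1}-1)$ is even, set $m:=2^{r-1}+2^{s-1}-1$. The recursion~\eqref{eq:recursion}, split as in the proof that $v_2(a_n)=\nu_n$, gives
\[\hat a_n=\sum_{j=0}^{m-1}\gauss{n-1}{j}a_ja_{n-1-j}+\frac12\gauss{n-1}{m}a_m^2 .\]

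\textbf{Step 1: the middle term is even.} By the proposition $v_2(a_k)=\nu_k=\sigma_2(k+1)-1$, we have $v_2(a_m^2/2)\geq 2\nu_m-1$. Now $m+1=2^{r-1}+2^{s-1}$ with $r-1>s-1\geq 0$, so $\sigma_2(m+1)=2$ and $\nu_m=1$. Hence this term has all coefficients even.

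\textbf{Step 2: only one summand survives.} For $j<m$, the $j$-th summand has $2$-adic valuation at least
\[\nu_j+\nu_{n-1-j}=\sigma_2(j+1)+\sigma_2(n-j)-2 .\]
Since $(j+1)+(n-j)=2^r+2^s$, this is $0$ only if both $j+1$ and $n-j$ are powers of $2$, which forces $\{j+1,n-j\}=\{2^s,2^r\}$. The constraint $j<m$ then gives $j=2^s-1=n_s$ and $n-1-j=2^r-1=n_r$ (notation of \S\ref{sec:alphanr}). All other summands are $\equiv 0\pmod 2$, so
\[\hat a_n\equiv \gauss{n-1}{n_s}\,a_{n_s}\,a_{n_r}\pmod 2 .\]

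\textbf{Step 3: identify the right-hand side.} By Equation~\eqref{eq:anr}, $a_{n_s}$ and $a_{n_r}$ are congruent modulo $2$ to products of Gaussian polynomials $\gauss{2n_k}{n_k}$ with $k<r$, so $2n_k\leq 2^r-2<n$. Hence $\hat a_n\equiv P\pmod 2$, where $P$ is a product of cyclotomic polynomials $\Phi_\ell$ with $2\leq\ell\leq n-1$. This uses that the roots of Gaussian polynomials lie in $\bigcup_{j\geq 2}\Omega_j$ and that $\pi_N$ factors into cyclotomic polynomials.

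\textbf{Step 4: compare resultants.} Write $\hat a_n=P+2Q$ with $Q\in\Z[\lambda]$. Since $\Phi_n$ is monic,
\[\hat\alpha_n=\prod_{\omega\in\Omega_n}\hat a_n(\omega)\equiv\prod_{\omega\in\Omega_n}P(\omega)=\resultant(\Phi_n,P)\]
modulo $2$ in the ring of algebraic integers. Both sides are rational integers, so the congruence holds modulo $2$ in $\Z$. By Theorem~\ref{theo:apostol}, each $\resultant(\Phi_\ell,\Phi_n)$ with $\ell<n$ is $1$ or a power of a prime dividing $n$. Since $n$ is odd, each such resultant is odd, so $\resultant(\Phi_n,P)$ is odd and therefore $\hat\alpha_n$ is odd.

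\textbf{Main obstacle.} The delicate part is the valuation bookkeeping in Step 2. One must check that the only way to reach total digit sum $2$ is the pair $\{2^s,2^r\}$, and that the index $j=n_s$ really lies below $m$, which holds because $s<r$. One also needs the observation of Step 1 that $\nu_m\geq 1$ kills the halved square term.
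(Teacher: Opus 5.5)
Your proof is correct and follows the paper's argument essentially step for step. You use the same split of the recursion at the middle index (your $m$ is the paper's $m+1$), the same digit-sum count isolating the summand $j=n_s$, and the same appeal to Equation~\eqref{eq:anr} and Apostol's theorem with $n$ odd; your Step~4 just spells out, via the product over $\Omega_n$ in the ring of algebraic integers, the reduction-mod-$2$ step that the paper states briefly.
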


\begin{proof}
%Assume that $n = 2^r +2^s -1$, with $r > s \geq 1$. In that case, $\nu_n = \sigma_2(n+1) -1 = 1$, so that
%\[v_2(a_n) = 1,\quad a_n = 2\hat a_n\quad\text{and}\quad v_2(\alpha_n) = v_2(\hat \alpha_n) + \varphi(n).\] 
%It suffices to prove that $\hat \alpha_n$ is an odd integer. 
As in the proof of Proposition~\ref{prop:alphanr}, consider the sequence $(n_j)_{j\geq 0}$ defined by $n_j := 2^j-1$. Set $m:= n_{r-1} + n_{s-1}$, so that $n-1=2(m+1)$. 
By the recursion formula~\eqref{eq:recursion}, 
\[a_n =\gauss{n-1}{m+1} a_{m+1}^2 + 2\sum_{j=0}^{m} \gauss{n-1}{j} a_j a_{n-j-1}.\]
The Gaussian polynomials have integer coefficients and are not divisible by $2$ since their value at $0$ is $1$. Consequently, 
\[v_2\left(\gauss{n-1}{m+1} a_{m+1}^2\right) = 2 v_2(a_{m+1}) = 2\bigl(\sigma_2(2^{r-1} + 2^{s-1})-1\bigr)=2.\]
In addition, for $j\in \ob0;m\cb$,
\begin{eqnarray*}
v_2\left(2\gauss{n-1}{j} a_j a_{n-j-1}\right) &=& 1 + v_2(a_j) + v_2(a_{n-j-1}) \\
&=& \sigma_2(j+1) + \sigma_2(n-j) -1 \geq \sigma_2(n+1) - 1 = 1. 
\end{eqnarray*}
Equality occurs if and only if there are no carries in the base-2 addition of $j+1$ and $n-j$, i.e., if and only if $j+1= 2^s = n_s+1$ and $n-j = 2^r = n_r+1$. 
It follows that 
\[\hat a_n \equiv  \gauss{n-1}{n_s} \hat a_{n_s} \hat a_{n_r} \pmod{2}.\]
It now follows from Equation~\eqref{eq:anr} that the reduction modulo $2$ of $\hat a_n$ is a product of cyclotomic polynomials $\Phi_j$ with $j\in \ob1;n-1\cb$. As $n$ is odd, these cyclotomic polynomials are all coprime to $\Phi_n$ modulo $2$. Hence, $\hat\alpha_n$ is an odd integer.
%, so that 
%\[v_2(\alpha_n) = v_2(\hat\alpha_n) + \varphi(n) = \varphi(n).\qedhere\]
\end{proof}

\subsection{A lower bound on $v_p(\alpha_n)$\label{sec:valuation}}

In this section, we prove that $n$ divides $\alpha_n$ for all $n\in \N^\ast$. It took us quite a while to realize that this is a general phenomenon, rather than a property specific to the quadratic polynomial $\lambda z(1-z)$. 

\begin{prop}\label{prop:valuation}
For every $n\in \N^\ast$ and every prime number $p$ dividing $n$, 
\[v_p(\alpha_n) \geq \frac{\varphi(n)}{p-1}, \]
where $\varphi$ is the Euler totient function. 
\end{prop}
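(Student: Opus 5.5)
The plan is to work one root at a time in a $p$-adic field. Write $n=p^rm$ with $p\nmid m$ and $r\geq 1$, and let $K=\Qp(\zeta_n)$ with its valuation $v$ normalized by $v(p)=1$. Since $\alpha_n=\resultant(\Phi_n,a_n)=\pm\prod_{\omega\in\Omega_n}a_n(\omega)$ and $|\Omega_n|=\varphi(n)$, it suffices to prove that $v\bigl(a_n(\omega)\bigr)\geq \frac1{p-1}$ for every $\omega\in\Omega_n$. Now $\frac1{p-1}$ is exactly $v(1-\zeta_p)$, where $\zeta_p:=\omega^{n/p}$ is a primitive $p$-th root of unity. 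So the goal is the divisibility $a_n(\omega)\in(1-\zeta_p)\Z[\omega]$. By Lemma~\ref{lem:aqcq} we have $a_n(\omega)=-c_n(\omega)$, so I may equally work with $c_n$, which is better adapted to iteration.

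Next I translate this into a statement over $\F_p$. Using Corollary~\ref{cor:phikpr}, $\Phi_n\equiv\Phi_m^{\varphi(p^r)}\pmod p$. The ideal $(1-\zeta_p)$ of $\Z[\omega]\cong\Z[\lambda]/(\Phi_n)$ has ramification index $\varphi(p^r)/(p-1)=p^{r-1}$ relative to the unramified part. It should therefore correspond to the ideal $\bigl(p,\Phi_m(\lambda)^{p^{r-1}}\bigr)$. So the claim becomes
\[
\Phi_m^{\,p^{r-1}}\ \text{divides}\ c_n \quad\text{in } \F_p[\lambda].
\]
This is a statement that can be proved uniformly in all $\omega$ and all cases of $r$. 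I would verify this identification of ideals carefully first, for instance by checking that $\Z[\omega]/(1-\zeta_p)$ has the right cardinality $p^{\varphi(m)p^{r-1}}$.

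To prove the divisibility, I would work in the $\F_p$-algebra $R:=\F_p[\lambda]/(\Phi_m^{p^{r-1}})$. In $R$, $(\lambda^m-1)^{p^{r-1}}=\lambda^{mp^{r-1}}-1=0$, so $\lambda^{N}=1$ with $N:=n/p$. Hence $f:=F_\lambda^{\circ N}$ is tangent to the identity over $R$, and $F_\lambda^{\circ n}=f^{\circ p}$. The heart of the matter is the characteristic-$p$ fact that the $p$-th iterate of a power series tangent to the identity is much closer to the identity. If $f(z)=z+O(z^{k+1})$ over a ring of characteristic $p$, then the coefficients of $f^{\circ p}$ in low degrees are given by integer-valued polynomials in the iteration count, namely combinations of $\binom{p}{i}$ with $1\leq i\leq p-1$, and these vanish in characteristic $p$. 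I would show the coefficient of $z^{n+1}$ in $f^{\circ p}$ vanishes in $R$. This requires first using the commutation of $f$ with $F_\lambda$, as in the paper's derivation of the parabolic multiplicity, to see that the first nontrivial term of $f$ sits in degree $1+N$ or beyond. Then the degree $1+pN=n+1$ coefficient comes only from terms with iteration polynomial of degree at most $p-1$, plus a borderline term to be handled separately.

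I expect the main obstacle to be this last step. In $R$ the elements $\lambda^k-1$ are zero divisors rather than units. So the commutation argument giving $f(z)=z+O(z^{N+1})$ does not transfer verbatim, and the top-degree contribution, of shape $\binom{p}{p}\,(\text{coefficient of } z^{N+1})^p$, must be shown to vanish in $R$ by a separate argument. If this approach stalls, I would try first the alternative of proving the congruence on the linearization side. This means showing, via the recursion~\eqref{eq:lin} and Lemma~\ref{lem:gausseuclid} with $q=m$, that $a_n$ vanishes modulo $(p,\Phi_m^{p^{r-1}})$. The Gaussian polynomials $\gauss{n-1}{j}$ evaluated near $\omega$ produce factors $\binom{n/q\,\cdots}{\cdots}$ divisible by $p$, which is the paper's hint that the phenomenon is general rather than specific to $\lambda z(1-z)$.
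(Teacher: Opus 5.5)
Your reduction is correct. It suffices to show $v\bigl(c_n(\omega)\bigr)\geq 1/(p-1)$ for each $\omega$. Since $v\bigl(\Phi_m(\omega)\bigr)=1/\varphi(p^r)$, the ideal $(1-\zeta_p)$ does equal $\bigl(p,\Phi_m(\omega)^{p^{r-1}}\bigr)$, so the target is indeed $\Phi_m^{p^{r-1}}\mid c_n$ in $\F_p[\lambda]$.

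For $r=1$ your mechanism goes through. $R=\F_p[\lambda]/(\Phi_m)$ is a product of finite fields in which $\lambda$ is a primitive $m$-th root of unity. So the commutation argument applies factorwise and gives $f=z+az^{m+1}+O(z^{m+2})$. Then $f^{\circ p}=z+\Delta^p(\mathrm{id})$ with $\Delta g:=g\circ f-g$, and the coefficient of $z^{1+pm}$ is $a^p\prod_{i=0}^{p-1}(1+im)\equiv 0$ because $p\nmid m$.

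For $r\geq 2$ the argument breaks down, and not merely at a borderline term: the intermediate claim $f=z+O(z^{N+1})$ in $R$ is false. Indeed $c_{N,1}=-(1+\lambda+\cdots+\lambda^{N-1})$. For $m=1$ this equals $-(\lambda-1)^{p^{r-1}-1}$ in $\F_p[\lambda]$, which is nonzero in $R=\F_p[\lambda]/\bigl((\lambda-1)^{p^{r-1}}\bigr)$. Concretely, for $n=4$, $p=2$, one gets $F_\lambda^{\circ 2}(z)=z+(1+\lambda)z^2+\lambda z^4$ in $\F_2[\lambda]/\bigl((\lambda+1)^2\bigr)$. Knowing only $f=z+O(z^2)$, the vanishing of the $\binom{p}{i}$ yields just $f^{\circ p}=z+O(z^{p+1})$, far below degree $n+1=p^r+1$. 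In this example the $z^5$ coefficient of $f^{\circ 2}=z+(1+\lambda)z^8+\lambda z^{16}$ does vanish, but only through Frobenius cancellations such as $f(z)^2=z^2+z^8$, which your outline does not capture. Your fallback through the recursion for $a_n$ is too vague to fill this gap.

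The missing idea is a factorization that produces the factor of valuation $1/(p-1)$ uniformly in $r$. The paper uses the Morton--Patel factorization over $\Z[\lambda]$:
$\lambda^nC_n(\lambda,z)-1=\prod_{d\mid n}D_d(\lambda,z)$, with $D_d(\lambda,0)=\Phi_d$.
Evaluating at $\omega\in\Omega_n$ shows that $\Phi_n$ divides the coefficients $d_{n,k}$, $k<n$, of $D_n$. Hence $\lambda^nc_n\equiv\Psi_n\,d_{n,n}\pmod{\Phi_n}$, where $\Psi_n=\prod_{d\mid n,\,d<n}\Phi_d$. Thus $\alpha_n=\resultant(\Phi_n,\Psi_n)\cdot\resultant(\Phi_n,-d_{n,n})$, and Apostol's formula gives $v_p\bigl(\resultant(\Phi_n,\Psi_n)\bigr)=\varphi(n)/(p-1)$. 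In your per-root language: $c_n(\omega)$ is divisible by $\Psi_n(\omega)$, which has valuation exactly $1/(p-1)$, and no characteristic-$p$ iteration is needed.
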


\begin{coro}
For every $n\in \N^\ast$, $n$ divides $\alpha_n$. 
\end{coro}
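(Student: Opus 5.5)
The corollary should follow from Proposition~\ref{prop:valuation} by a purely elementary comparison of exponents, one prime at a time. Since $\alpha_n\in\Z$, to show that $n$ divides $\alpha_n$ it is enough to check that $v_p(\alpha_n)\geq v_p(n)$ for every prime $p$ dividing $n$. Primes not dividing $n$ impose no condition. For $n=1$ the statement is trivial.

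So fix a prime $p\mid n$ and write $n=p^r m$ with $r\geq 1$ and $p\nmid m$. Multiplicativity of the Euler totient function gives $\varphi(n)=\varphi(p^r)\varphi(m)=p^{r-1}(p-1)\varphi(m)$. Hence
\[\frac{\varphi(n)}{p-1}=p^{r-1}\varphi(m)\geq p^{r-1}.\]
Proposition~\ref{prop:valuation} then gives $v_p(\alpha_n)\geq p^{r-1}$.

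It remains to compare $p^{r-1}$ with $r=v_p(n)$. We have $p^{r-1}\geq 2^{r-1}\geq r$ for every $r\geq 1$, by a trivial induction: equality holds at $r=1$, and passing from $r$ to $r+1$ doubles the left side while adding $1$ to the right. Therefore $v_p(\alpha_n)\geq v_p(n)$ for every prime $p\mid n$, and so $n\mid\alpha_n$.

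There is no real obstacle here; all the difficulty is in Proposition~\ref{prop:valuation} itself, which we take as given. The only point needing care is the inequality $p^{r-1}\geq r$. It is slack in general and tight only for $p=2$ with $r\in\{1,2\}$, which matches the $2$-parts of $n=2,4$ in Table~\ref{table}. This also recovers Proposition~\ref{prop:divn}.
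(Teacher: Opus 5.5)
Your proof is correct and is essentially the paper's: both write $n=kp^r$ with $p\nmid k$, apply Proposition~\ref{prop:valuation} to get $v_p(\alpha_n)\geq \varphi(k)p^{r-1}\geq 2^{r-1}\geq r$, and conclude prime by prime (the paper uses $(1+1)^{r-1}\geq 1+(r-1)$ where you use induction). Your closing aside is inaccurate but does not affect the proof: $p^{r-1}\geq r$ is an equality at $r=1$ for every prime $p$, not only for $p=2$, and Table~\ref{table} gives $\alpha_4=4\cdot 2\cdot 17$, so $v_2(\alpha_4)=3>2$ and the bound is not attained at $n=4$.
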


\begin{proof}
Fix $n\in \N^\ast$ and let $p$ be a prime number dividing $n$. Let $n = kp^r\in \N^\ast$ where $k\in \N^\ast$ is not divisible by  $p$ and $r\geq 1$.  By Proposition~\ref{prop:valuation}, 
\[v_p(\alpha_n) \geq \frac{\varphi(n)}{p-1} = \varphi(k) p^{r-1}\geq 1\cdot 2^{r-1} = (1+1)^{r-1}\geq 1+(r-1) = r = v_p(n).\]
Thus, $p^r$ divides $\alpha_n$. Since this is true for all prime divisors of $n$, this completes the proof. 
\end{proof}

Our proof of Proposition~\ref{prop:valuation} relies on the following result. Set 
\[\Psi_n = \prod_{\substack{d \mid n \\ d\neq n}} \Phi_d.\]

\begin{lemma}\label{lem:valres}
For all $n\in \N^\ast$ and every prime number $p$ dividing $n$, 
\[v_p\bigl(\resultant(\Phi_n,\Psi_n)\bigr)= \frac{\varphi(n)}{p-1}.\]
\end{lemma}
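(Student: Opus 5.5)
The plan is to reduce everything to Apostol's Theorem~\ref{theo:apostol}, using multiplicativity of the resultant:
\[\resultant(\Phi_n,\Psi_n)=\prod_{\substack{d\mid n\\ d\neq n}}\resultant(\Phi_d,\Phi_n),\]
where the sign is harmless because both polynomials are monic and we only look at valuations. By Theorem~\ref{theo:apostol} (applied with $m=d<n$), the factor $\resultant(\Phi_d,\Phi_n)$ is $q^{\varphi(d)}$ if $n/d$ is a positive power of a prime $q$, and $1$ otherwise. Only the divisors $d$ with $n/d$ a power of $p$ itself contribute to the $p$-adic valuation.

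Write $n=kp^r$ with $p\nmid k$ and $r\geq1$. The relevant divisors are then $d=kp^{j}$ with $0\leq j\leq r-1$. This gives
\[v_p\bigl(\resultant(\Phi_n,\Psi_n)\bigr)=\sum_{j=0}^{r-1}\varphi(kp^j).\]

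It remains to evaluate this sum. Since $\varphi$ is multiplicative and $p\nmid k$, we have $\varphi(kp^j)=\varphi(k)\varphi(p^j)$. The sum $\sum_{j=0}^{r-1}\varphi(p^j)$ telescopes: it equals $1+\sum_{j=1}^{r-1}(p^j-p^{j-1})=p^{r-1}$. Equivalently, it is the number of divisors-weighted count $\sum_{e\mid p^{r-1}}\varphi(e)=p^{r-1}$. Hence the total is $\varphi(k)p^{r-1}$. On the other hand,
\[\frac{\varphi(n)}{p-1}=\frac{\varphi(k)\,p^{r-1}(p-1)}{p-1}=\varphi(k)\,p^{r-1},\]
which is the claimed value.

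There is no real obstacle here. The only care needed is to check that Theorem~\ref{theo:apostol} applies to every proper divisor $d$ (it does, since $d<n$). One must also note that a divisor $d$ for which $n/d$ is a power of a different prime $q\neq p$ contributes a power of $q$, and hence nothing to $v_p$. Finally, the case $k=1$ (that is, $n=p^r$, including $d=1$ with $\Phi_1$) is covered by the same formula, since $\varphi(1)=1$.
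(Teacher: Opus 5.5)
Your proposal is correct and follows essentially the same route as the paper: both use multiplicativity of the resultant over the proper divisors of $n$, apply Apostol's theorem to keep only the divisors $d=kp^{j}$ with $0\leq j\leq r-1$, and sum $\varphi(k)\varphi(p^j)$ to obtain $\varphi(k)p^{r-1}=\varphi(n)/(p-1)$. The only difference is that the paper writes the geometric sum explicitly as $1+(p-1)+(p-1)p+\cdots+(p-1)p^{r-2}$ rather than telescoping it.
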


\begin{proof}
Fix $n\in \N^\ast$ and let $p\geq 2$ be a prime number dividing $n$. Write $n=kp^r$ where $k\in \N^\ast$ is not divisible by $p$ and $r\in \N^\ast$. Then, 
\[\frac{\varphi(n)}{p-1} = \varphi(k)p^{r-1}.\]
Now, 
\[\resultant(\Phi_n,\Psi_n) =  \prod_{\substack{d \mid n \\ d\neq n}} \resultant(\Phi_n,\Phi_d).\]
According to  Apostol \cite{apostol}:  for all $m,n\in\N^\ast$ with $m<n$, we have
\[\resultant(\Phi_m,\Phi_n) = 
\begin{cases} 
p^{\varphi(m)}&\text{if }n = mp^s\text { with } p\geq 2 \text{ prime and }s\in\N^\ast, \\ 
1& \text{otherwise.}
\end{cases}\]
The divisors $d\in \N^\ast$ of $n$ for which $n = dp^s$ with $s\in \N^\ast$ are precisely the integers $d=kp^{r-s}$ with $s\in \ob1,r\cb$. 
Consequently, 
\begin{eqnarray*}
v_p\bigl(\resultant(\Phi_n,\Psi_n)\bigr)&=&\varphi(k)\left(1 + (p-1) + (p-1)p +\cdots + (p-1)p^{r-2}\right) \\
&=&  \varphi(k)\left(1+ (p-1)\frac{p^{r-1}-1}{p-1}\right) = \varphi(k)p^{r-1} = \frac{\varphi(n)}{p-1}.\qedhere
\end{eqnarray*}
\end{proof}

Our proof also relies on a result  of Morton and Patel \cite{mp}. We need a slightly stronger version (see \cite[Theorem 13]{bfls}). 

\begin{theo}\label{theo:mp}
Let $P\in R[z]$ be a polynomial with coefficients in a unique factorization domain $R$. 
Then, there exists a sequence of polynomials $\bigl(\Phi_n^P\in R[z]\bigr)_{n\in \N^\ast}$ such that for all $n\in \N^\ast$, 
\[P^{\circ n}(z) - z = \prod_{d\mid n} \Phi_d^P.\]
\end{theo}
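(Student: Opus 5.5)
We would reduce the statement to a universal situation, prove it there by Möbius inversion of root multiplicities, and then specialize. We set
\[\Phi_n^P := \prod_{d\mid n}\bigl(P^{\circ d}(z)-z\bigr)^{\mu(n/d)}\]
as an element of the fraction field, mirroring the formula $\Phi_n(\lambda)=\prod_{d\mid n}(\lambda^d-1)^{\mu(n/d)}$. The identity $P^{\circ n}(z)-z=\prod_{d\mid n}\Phi_d^P$ then holds formally by Möbius inversion. All the content is in showing that $\Phi_n^P\in R[z]$.

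\textbf{Reduction to a universal polynomial.} Fix $D\geq 2$ and let $A:=\Z[t_0,\ldots,t_D]$, with $t_0,\dots,t_D$ indeterminates. Put $\mathbf P(z)=\sum_{i=0}^D t_i z^i\in A[z]$. The ring $A$ is a UFD of characteristic $0$. Every $P\in R[z]$ of degree at most $D$ is the image of $\mathbf P$ under a ring homomorphism $A\to R$. This also covers $\deg P\leq 1$, since we may take $D\geq 2$ arbitrary. If we show $\Phi_n^{\mathbf P}\in A[z]$, the polynomial identity $\mathbf P^{\circ n}-z=\prod_{d\mid n}\Phi_d^{\mathbf P}$ in $A[z]$ specializes, and the images give the required sequence in $R[z]$. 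Here the factorization hypothesis on $R$ plays no role.

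\textbf{Step 1 (primitivity).} Each $\mathbf P^{\circ d}-z$ is primitive in $A[z]$. Suppose a prime $\pi\in A$ divided it. Then in the domain $(A/\pi)[z]$ we would have $\mathbf P^{\circ d}=z$, which forces $\mathbf P$ to have degree $1$ modulo $\pi$. Hence $\pi$ divides all of $t_2,\ldots,t_D$, so $D=2$ and $\pi=\pm t_2$. But $(t_1z+t_0)^{\circ d}\neq z$ in $\Z[t_0,t_1][z]$, a contradiction. By Gauss's lemma, products of primitive polynomials are primitive. Also, if $F\in A[z]$ and $G\in A[z]$ is primitive with $F/G\in K[z]$, where $K=\mathrm{Frac}(A)$, then $F/G\in A[z]$. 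Clearing the positive and negative Möbius exponents, it therefore suffices to prove $\Phi_n^{\mathbf P}\in K[z]$.

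\textbf{Step 2 (multiplicities over $\overline K$).} We show that for each root $\alpha\in\overline K$ the exponent $\sum_{d\mid n}\mu(n/d)f(d)$ is nonnegative, where $f(d)$ is the multiplicity of $\alpha$ as a root of $\mathbf P^{\circ d}-z$. Let $m$ be the exact period of $\alpha$ and $\rho=(\mathbf P^{\circ m})'(\alpha)$ its multiplier. Note that $\mathbf P^{\circ m}\neq \mathrm{id}$ because $\deg\mathbf P\ge2$. Since $\mathrm{char}\,K=0$, the argument of the subsection on parabolic multiplicity applies verbatim to the germ $\mathbf P^{\circ m}$ at $\alpha$: the commutation argument works for formal power series. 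It yields the following. If $\rho$ is not a root of unity, then $f(d)=[m\mid d]$. If $\rho$ is a primitive $q$-th root of unity, then $f(d)=[m\mid d]\bigl(1+\nu q\,[qm\mid d]\bigr)$ for some $\nu\geq1$; this includes $q=1$.

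Using $\sum_{d\mid n,\ m\mid d}\mu(n/d)=[n=m]$, the exponent equals
\[[n=m]+\nu q\,[n=qm]\geq 0.\]
So $\Phi_n^{\mathbf P}$ is a polynomial over $\overline K$ lying in $K(z)$, hence it lies in $K[z]$. With Step 1 this gives $\Phi_n^{\mathbf P}\in A[z]$.

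\textbf{Main obstacle.} The delicate point is the multiplicity computation in Step 2. One must justify carefully that the multiplicity of $\alpha$ in $\mathbf P^{\circ km}-z$ is $1$ unless $\rho^k=1$, and that it equals $1+\nu q$ as soon as $\rho^k=1$, with $\nu$ independent of $k$. The first claim follows from $(\mathbf P^{\circ km})'(\alpha)=\rho^k$. The second uses that $(\mathbf P^{\circ qm})^{\circ j}(z)=z\bigl(1+jc\,(z-\alpha)^{\nu q}+\cdots\bigr)$ in local coordinates at $\alpha$, where $c\neq0$ and $j\neq0$ in characteristic $0$. This last point is exactly where working over $\Z$, rather than directly over a ring $R$ of positive characteristic, is essential.
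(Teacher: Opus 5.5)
Your proof is correct. It necessarily takes a different route from the paper, because the paper does not prove this statement: it imports it from Morton--Patel \cite{mp}, in the strengthened form \cite[Theorem 13]{bfls}, and uses it only for $R=\Z[\lambda]$ and $P=\lambda z-\lambda z^2$.

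\textbf{Checking your argument.} You define $\Phi_n^P$ by the M\"obius product and settle integrality once, for the generic $\mathbf P$ over $A=\Z[t_0,\ldots,t_D]$, and then specialize. Step 1 uses primitivity of each $\mathbf P^{\circ d}-z$ together with Gauss's lemma to reduce $A[z]$-integrality to $K[z]$-integrality. Step 2 is a characteristic-$0$ multiplicity count, $f(d)=[m\mid d]\bigl(1+\nu q\,[qm\mid d]\bigr)$, giving nonnegative exponents $[n=m]+\nu q\,[n=qm]$. The delicate points all hold:
\begin{itemize}
\item In the primitivity argument, the degree must drop to $1$ modulo $\pi$, which forces $D=2$ and $\pi=\pm t_2$, and then $(t_1z+t_0)^{\circ d}\neq z$.
\item The commutation argument showing $q\mid N$ is purely formal and valid over any field, since $\rho\neq 0$.
\item The M\"obius sums are computed correctly.
\item The independence of the multiplicity $1+\nu q$ from $j$ really uses $jc\neq 0$, i.e.\ characteristic $0$.
\end{itemize}

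\textbf{What your route buys.} First, it proves more than the cited result. The UFD hypothesis on $R$ is superfluous: any commutative ring works, since every divisibility question is settled in the universal UFD $\Z[t_0,\ldots,t_D]$. Second, no condition on the leading coefficient is needed. This is relevant to the paper's $P=\lambda z-\lambda z^2$, obtained from $\mathbf P$ by $t_0\mapsto 0$, $t_1\mapsto\lambda$, $t_2\mapsto-\lambda$, whose leading coefficient is not a unit in $\Z[\lambda]$. Third, the degenerate cases ($\deg P\le 1$, or $P^{\circ n}=z$) come for free. Finally, you correctly isolate why the universal reduction is essential: the multiplicity count genuinely fails in positive characteristic. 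For $P(z)=z+z^2$ over $\F_p$, the point $0$ has multiplicity $2$ in $P(z)-z$ but at least $3$ in $P^{\circ p}(z)-z$. So one cannot run your Step 2 directly over an arbitrary $R$, only over $\Z$ and then specialize. What the paper's citation buys is only brevity.
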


%\begin{coro}\label{coro:mp}
%Let $P\in R[z]$ be a monic polynomial with coefficients in some integral domain $R$. Then, there exists a sequence of monic polynomials $\bigl(\Phi_n^P\in R[z]\bigr)_{n\in \N^\ast}$ such that for all $n\in \N^\ast$, 
%\[P^{\circ n}(z) - z = \prod_{d\mid n} \Phi_n^P.\]
%\end{coro}
%
%\begin{proof}
%Let $K$ be the field of fractions of $R$. Let $(\Phi_n^P)_{n\in \N^\ast}$ be the sequence provided by Theorem~\ref{theo:mp}.  Since $P$ is monic, and since the Euclidean division algorithm works in any ring, we get the result by induction on $n$. 
%\end{proof}

\begin{proof}[Proof of Proposition \ref{prop:valuation}]
On the one hand, for $n\in \N^\ast$, 
\[F_\lambda^{\circ n}(z) = \lambda^n z C_n(\lambda,z)\quad \text{with}\quad C_n(\lambda,z) := \sum_{k\geq 0} c_{n,k}(\lambda) z^k.\]
We have
\[\alpha_n = \resultant(\Phi_n, -c_{n,n}).\]
On the other hand, set $R:= \Z[\lambda]$ and regard the family $F_\lambda$ as a polynomial $P\in R[z]$: 
\[P(z) := \lambda z - \lambda z^2.\]
Let $\bigl(\Phi_n^P\in R[z]\bigr)_{n\in \N^\ast}$  be the sequence of polynomials provided by Theorem~\ref{theo:mp}. 
Then, 
\[\Phi_1^P(z) = P(z) - z = (\lambda-1) z - \lambda z^2 = z D_1(\lambda,z) \quad \text{where}\quad D_1(z) := \lambda-1 - \lambda z.\]
For $n\geq 2$, set 
\[D_n(\lambda,z) := \Phi_n^P(z).\]
Then, for all $n\in \N^\ast$, 
\begin{equation}\label{eq:cndn}
\lambda^n C_n(\lambda,z)  -1=  \prod_{d\mid n} D_d(\lambda,z).
\end{equation}
For $n\in \N^\ast$, define $E_n\in \Z[\lambda,z]$ by 
\[E_n := \prod_{\substack{d\mid n\\d\neq n}} D_d, \quad \text{so that}\quad \lambda^n C_n -1= D_n\cdot  E_n.\]
Write
\[D_n(\lambda,z) = \sum_{k\geq 0} d_{n,k}(\lambda) z^k\quad \text{and}\quad E_n(\lambda,z) = \sum_{k\geq 0} e_{n,k}(\lambda) z^k\quad \text{with}\quad d_{n,k},e_{n,k}\in \Z[\lambda].\]
Then, $c_{n,0} = 1$ and for all $k\in \N^\ast$, 
\[\lambda^n c_{n,k} = \sum_{\substack{i,j\geq 0\\i+j= k}} d_{n,i} e_{n,j}.\]

\begin{lemma}
For all $n\in \N^\ast$, 
\[d_{n,0} = \Phi_n.\]
%\quad \text{and}\quad e_{n,0} = \Psi_n.\]
\end{lemma}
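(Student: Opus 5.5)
We need to show $d_{n,0}=\Phi_n$, where $d_{n,0}(\lambda)=D_n(\lambda,0)$. The plan is to set $z=0$ in Equation~\eqref{eq:cndn}, obtain a factorization of $\lambda^n-1$ that mirrors the cyclotomic one, and conclude by Möbius-type induction on $n$ using unique factorization in $\Z[\lambda]$.

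\textbf{Step 1: evaluate at $z=0$.} Since $c_{n,0}=1$, setting $z=0$ in Equation~\eqref{eq:cndn} gives
\[\lambda^n-1=\prod_{d\mid n} d_{d,0}(\lambda)\]
for every $n\in\N^\ast$. For $n=1$ this reads $d_{1,0}=\lambda-1=\Phi_1$, consistent with $D_1=\lambda-1-\lambda z$. Here $d_{d,0}\in\Z[\lambda]$ because each $D_d\in\Z[\lambda][z]$.

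\textbf{Step 2: strong induction on $n$.} Suppose $d_{m,0}=\Phi_m$ for every proper divisor $m$ of $n$. Then
\[\lambda^n-1=d_{n,0}\prod_{m\mid n,\ m\neq n}\Phi_m = d_{n,0}\,\Psi_n.\]
We also know that $\lambda^n-1=\Phi_n\Psi_n$. The polynomial $\Psi_n$ is monic, hence nonzero, and $\Z[\lambda]$ is an integral domain. Cancelling $\Psi_n$ gives $d_{n,0}=\Phi_n$.

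\textbf{Potential obstacle.} The only delicate point is normalization. Theorem~\ref{theo:mp} fixes the factors $\Phi_d^P$ only up to units of $R=\Z[\lambda]$, that is, up to sign. However, the conclusion is forced by the identity itself. The equation $\lambda^n C_n-1=\prod_{d\mid n}D_d$ with $D_1=\lambda-1-\lambda z$ pins down each $D_n$ inductively as the exact quotient, so the cancellation argument gives equality on the nose and not merely up to sign.

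If one worries that Equation~\eqref{eq:cndn} holds only up to a unit, one can simply choose the signs of the $D_n$ so that it holds exactly. This is legitimate because the paper defines $D_n$ through that equation. The resulting constant term is then exactly $\Phi_n$.
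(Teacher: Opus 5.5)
Your proof is correct and is the paper's argument: set $z=0$ in Equation~\eqref{eq:cndn} to get $\lambda^n-1=\prod_{d\mid n}d_{d,0}$, then run strong induction and cancel the monic polynomial $\Psi_n$ in $\Z[\lambda]$. You spell out this cancellation, which the paper leaves implicit, and you write the indices correctly where the paper's display has a typo. Your worry about units is unnecessary, because Theorem~\ref{theo:mp} asserts exact equalities $P^{\circ n}(z)-z=\prod_{d\mid n}\Phi_d^P$, and these already determine each $\Phi_d^P$ uniquely as an exact quotient in the integral domain $R[z]$.
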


\begin{proof}
By definition, $d_{1,0}  = D_1(\lambda,0) = \lambda-1 = \Phi_1(\lambda)$. And since $F_\lambda^{\circ n}(z) - z = (\lambda^n-1) z + \O(z^2)$, for all $n\in \N^\ast$, 
\[\prod_{d\mid n} d_{n,0} = \prod_{d\mid n} D_n(\lambda,0) = \lambda^n C_n(\lambda,0) -1= \lambda^n-1 = \prod_{d\mid n} \Phi_n(\lambda).\]
The result for $d_{n,0}$ follows by induction on $n$.%, and the result for $e_{n,0}=E_n(\lambda,0)$ then follows from the definition of $E_n$. 
\end{proof}

%\begin{lemma}\label{lem:phindividesc}
%For all $k\in \ob0,n-1\cb$, the monic polynomial $\Phi_n$ divides $c_{n,k}$ in $\Z[\lambda]$. 
%\end{lemma}
%
%\begin{proof}
%Let $\omega\in \Omega_n$ be a primitive $n$-th root of unity. Then, 
%\[F_\omega^{\circ n}(z) = z + \O(z^{n+1}) = z \sum_{k\geq 0} c_{n,k}(\omega) z^k.\]
%Assume $k\in  \ob0,n-1\cb$. It follows that $c_{n,k}(\omega) = 0$. Hence, $\Phi_n$ divides $c_{n,k}$ in $\Qbar[z]$. Since $\Phi_n$ is a monic polynomial in $\Z[\lambda]$ and since $c_{n,k}\in \Z[\lambda]$, $\Phi_n$ divides $c_{n,k}$ in $\Z[\lambda]$. 
%\end{proof}

\begin{lemma}\label{lem:phindividesd}
For all $k\in \ob0,n-1\cb$, the polynomial $\Phi_n$ divides $d_{n,k}$ in $\Z[\lambda]$. 
\end{lemma}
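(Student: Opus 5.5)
The plan is to evaluate at each primitive $n$-th root of unity $\omega\in\Omega_n$ and show that the power series $D_n(\omega,z)$ vanishes to order at least $n$ at $z=0$. Once $d_{n,k}(\omega)=0$ is known for all $\omega\in\Omega_n$ and all $k\in\ob0,n-1\cb$, we conclude as follows. The roots of $\Phi_n$ are simple and are exactly $\Omega_n$, so $\Phi_n$ divides $d_{n,k}$ in $\Q[\lambda]$. Since $\Phi_n$ is monic, Euclidean division of $d_{n,k}\in\Z[\lambda]$ by $\Phi_n$ stays in $\Z[\lambda]$. Hence the quotient has integer coefficients and the divisibility holds in $\Z[\lambda]$.

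Fix $\omega\in\Omega_n$. The key steps, in order, are:
\begin{enumerate}
\item Since $F_\omega^{\circ n}(z)=z+\O(z^{n+1})$ (recalled in the subsection on parabolic multiplicity), we have
\[\omega^nC_n(\omega,z)-1=\frac{F_\omega^{\circ n}(z)-z}{z}=\O(z^n).\]
\item By Equation~\eqref{eq:cndn}, this gives $D_n(\omega,z)\,E_n(\omega,z)=\O(z^n)$.
\item Next, $E_n(\omega,z)$ is invertible in $\Qbar[[z]]$, i.e.\ its constant term is nonzero. Its constant term is $\prod_{d\mid n,\,d\neq n}D_d(\omega,0)$. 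By the previous lemma, $D_d(\omega,0)=d_{d,0}(\omega)=\Phi_d(\omega)$. This is nonzero because the roots of $\Phi_d$ are primitive $d$-th roots of unity, whereas $\omega$ has exact order $n\neq d$. This also covers $d=1$, since $d_{1,0}=\lambda-1$ and $\omega\neq1$ when $n\geq2$.
\item Multiplying by the inverse series gives $D_n(\omega,z)=\O(z^n)$, so $d_{n,k}(\omega)=0$ for $k\in\ob0,n-1\cb$.
\end{enumerate}

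The case $n=1$ reduces to $k=0$, where $d_{1,0}=\Phi_1$ directly. The only point needing care is the nonvanishing of the constant term of $E_n$ at $\omega$, which is exactly where the previous lemma $d_{n,0}=\Phi_n$ enters. Everything else is formal power series manipulation, so I expect no real obstacle.
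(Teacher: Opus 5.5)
Your proposal is correct and is essentially the paper's own argument: evaluate at $\omega\in\Omega_n$, use $F_\omega^{\circ n}(z)=z+\O(z^{n+1})$ and $E_n(\omega,0)=\Psi_n(\omega)\neq 0$ (which follows from $d_{d,0}=\Phi_d$) to get $D_n(\omega,z)=\O(z^n)$, then use that $\Phi_n$ is monic to pass to divisibility in $\Z[\lambda]$. Your explicit treatment of $d=1$, of $n=1$, and of the step from $\Q[\lambda]$ to $\Z[\lambda]$ only spells out details the paper leaves implicit.
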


\begin{proof}
Let $\omega \in \Omega_n$ be a primitive $n$-th root of unity. Then, 
\[z D_n(\omega, z) E_n(\omega, z) = F_\omega^{\circ n}(z) -z = \O(z^{n+1}).\] In addition, by the previous lemma,
\[ E_n(\omega, 0) = \prod_{\substack{d\mid n\\d\neq n}} d_{n,0}(\omega) = \Psi_n(\omega) \neq 0.\] Therefore, $D_n(\omega, z) = \O(z^n)$. Assume $k\in  \ob0,n-1\cb$. Thus, we have $d_{n,k}(\omega) = 0$ for all $\omega\in \Omega_n$. Since $\Phi_n$ is a monic polynomial in $\Z[\lambda]$ and $d_{n,k} \in \Z[\lambda]$, it follows that $\Phi_n$ divides $d_{n,k}$ in $\Z[\lambda]$.
\end{proof}

%\begin{proof}
%We prove the result by strong induction on $k$. For $k=0$: $d_{n,0} = \Phi_n$ by the previous lemma, so the property holds trivially. Assume that $k\in \ob 1, n-1\cb$ and $\Phi_n$ divides $d_{n,i}$ for all $i\in \ob0,k-1\cb$. Then, 
%\[\lambda^n c_{n,k} = \sum_{\substack{i,j\geq0\\i+j = k}} d_{n,i} e_{n,j} = d_{n,k} e_{n,0} + b_{n,k},\quad \text{where}\quad b_{n,k} :=\sum_{i=0}^{k-1} d_{n,i} e_{n,k-i}.\]
%By the induction hypothesis, $\Phi_n$ divides $b_{n,k}$ in $\Z[\lambda]$, and by Lemma~\ref{lem:phindividesc}, it divides $c_{n,k}$ in $\Z[\lambda]$. Therefore, $\Phi_n$ divides $d_{n,k} \Psi_n = c_{n,k} - b_{n,k}$. Since $\Phi_n$ and $\Psi_n$ are coprime, and since $\Phi_n$ is a monic polynomial, it divides $d_{n,k}$ in $\Z[\lambda]$. 
%\end{proof}

It follows that 
\[\lambda^n c_{n,n} =  \sum_{\substack{i,j\geq0\\i+j = n}} d_{n,i} e_{n,j} \equiv d_{n,n} \Psi_n \pmod{\Phi_n}.\]
Thus, 
\[\underset{=1}{\underbrace{\resultant(\Phi_n,\lambda^n)}}\ \alpha_n = \resultant(\Phi_n,\Psi_n)\cdot \underset{\in \Z}{\underbrace{\resultant(\Phi_n,-d_{n,n})}}.\]
This completes the proof since $v_p\bigl( \resultant(\Phi_n,\Psi_n)\bigr) = \ds \frac{\varphi(n)}{p-1}$ by Proposition~\ref{lem:valres}. 
\end{proof}

\subsection{When $n$ is a prime number\label{sec:prime}}

\begin{prop}\label{prop:nprime}
If $n\geq 2$ is a prime number, then $\alpha_n \equiv n \pmod{n^2}$. 
\end{prop}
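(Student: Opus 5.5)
The plan is to reuse the factorization from the proof of Proposition~\ref{prop:valuation}. For $n=p$ prime we have $\Psi_p=\Phi_1=\lambda-1$, and the congruence $\lambda^p c_{p,p}\equiv d_{p,p}\Psi_p \pmod{\Phi_p}$ gives
\[\alpha_p=\resultant(\Phi_p,\lambda-1)\cdot\resultant(\Phi_p,-d_{p,p}).\]
Here $\resultant(\Phi_p,\lambda-1)=\pm\Phi_p(1)=\pm p$ by Corollary~\ref{cor:valuephin1}, with sign $+$ for odd $p$. The case $p=2$ is covered by the table, since $\alpha_2=2$. It therefore suffices to show $\resultant(\Phi_p,-d_{p,p})\equiv 1\pmod p$.

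For this, write $\resultant(\Phi_p,Q)=\prod_{\omega\in\Omega_p}Q(\omega)=N_{\Q(\omega)/\Q}\bigl(Q(\omega)\bigr)$ for $Q\in\Z[\lambda]$. Since $(1-\omega)$ is a prime of $\Z[\omega]$ of residue degree $1$ above $p$, we have $Q(\omega)\equiv Q(1)\pmod{1-\omega}$. Because $N(Q(\omega))=\prod_\sigma\sigma(Q(\omega))$ and every conjugate is also $\equiv Q(1)$, it follows that $\resultant(\Phi_p,Q)\equiv Q(1)^{p-1}\pmod p$. By Fermat's little theorem, the claim reduces to $p\nmid d_{p,p}(1)$.

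To compute $d_{p,p}(1)$, specialize $\lambda=1$, so $f(z):=F_1(z)=z-z^2$. There $D_1(1,z)=-z$, and Equation~\eqref{eq:cndn} gives $f^{\circ p}(z)-z=-z^2D_p(1,z)$. Hence $d_{p,p}(1)=-[z^{p+2}]\,f^{\circ p}(z)$, and we must show this coefficient is nonzero modulo $p$. Sanity checks: for $p=2$, $f^{\circ 2}(z)=z-2z^2+2z^3-z^4$ gives $d_{2,2}(1)=1$, and $\alpha_3=21\equiv 3\pmod 9$. So the target is to work in $\F_p[[z]]$ and prove $f^{\circ p}(z)\equiv z+\epsilon z^{p+2}+\O(z^{p+3})$ with $\epsilon=\pm1$. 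The lower coefficients vanish mod $p$ automatically, since Lemma~\ref{lem:phindividesd} gives $p=\Phi_p(1)\mid d_{p,k}(1)$ for $k<p$.

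The main obstacle is this characteristic-$p$ computation. I would pass to the coordinate $w=1/z$. From $\frac{1}{z-z^2}=\frac1z+\frac1{1-z}$, the map becomes
\[g(w)=w+1+\frac{1}{w-1},\]
a translation plus the perturbation $u(w)=1/(w-1)=\O(w^{-1})$. To first order in $u$, $g^{\circ p}(w)=w+p+\sum_{j=0}^{p-1}u(w+j)+\cdots$. In $\F_p(w)$ we have the identity
\[\sum_{j=0}^{p-1}\frac1{w+j}=\frac{(w^p-w)'}{w^p-w}=\frac{-1}{w^p-w},\]
and the shift by $-1$ is irrelevant because the sum runs over all of $\F_p$. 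So the first-order term is $-w^{-p}+\O(w^{-2p+1})$. Translating back, $1/f^{\circ p}(z)\equiv 1/z - z^p+\cdots$, hence $f^{\circ p}(z)\equiv z+z^{p+2}+\cdots$, i.e. $\epsilon=1$; this agrees with $p=2$.

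What remains, and what I expect to be hardest, is to show that the higher-order terms in $u$ contribute nothing at order $w^{-p}$ modulo $p$. I would try to organize $g^{\circ p}$ as $w+p+\sum_j u(w_j)$ with $w_j=g^{\circ j}(w)=w+j+\O(w^{-1}\log)$, and Taylor-expand $u(w_j)$ around $w+j$. Each correction term then has the form $\sum_{j}R(w+j)$ with $R$ a rational function built from $u$, its derivatives and partial sums. The aim is to show that such full sums over $j\in\F_p$ are either divisible by $p$ or of order $\O(w^{-p-1})$. The tool is the power-sum vanishing $\sum_{j\in\F_p}j^m=0$ for $0<m<p-1$ (with $0^0=1$, so the sum is also $0$ for $m=0$), applied after expanding in $1/w$.

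If this bookkeeping becomes unwieldy, the fallback is to compute $[z^{p+2}]f^{\circ p}$ modulo $p$ directly via the linearization of \S\ref{sec:lin} near $\lambda=1$, in the style of Lemma~\ref{lem:aqcq}.
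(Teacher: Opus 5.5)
Your reduction is correct, but it is an equivalence: $\alpha_p\equiv p \pmod{p^2}$ holds if and only if $p\nmid d_{p,p}(1)$. So the whole content of the proposition sits in the step you leave open, and your first-order computation only predicts the answer. The correct parts are these: for odd $p$, $\Psi_p=\lambda-1$ gives $\alpha_p=p\cdot\resultant(\Phi_p,-d_{p,p})$; you have $\resultant(\Phi_p,Q)\equiv Q(1)^{p-1}\pmod p$; and $d_{p,p}(1)=-[z^{p+2}]\,(z-z^2)^{\circ p}$. Reducing $-\lambda^p a_p\equiv(\lambda-1)d_{p,p}\pmod{\Phi_p}$ modulo $p$ and using $\Phi_p\equiv(\lambda-1)^{p-1}$ gives $d_{p,p}(1)\equiv -a_p'(1)\pmod p$. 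So your target is exactly Lemma~\ref{lem:simpleroot}. The paper proves it by computing $a_k'(1)$ in closed form over $\Q$ (first-order perturbation of the linearization at $\lambda=1$, giving $\eta_k=-1/k$) and then Wilson's theorem, which avoids any characteristic-$p$ iteration.

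The bookkeeping you sketch does not work as stated. The correction terms are not full sums over $j$ of functions of $w+j$. They involve $\delta_j=\sum_{i<j}u(w_i)$, whose coefficient of $w^{-n}$ is a polynomial in $j$ of degree $n$, e.g. $\delta_j=j\,w^{-1}+\bigl(j-\binom{j}{2}\bigr)w^{-2}+\cdots$. A priori such terms can produce $j^{p-1}$ at order $w^{-p}$. Power-sum vanishing also needs $p$-integral coefficients, and Faulhaber sums $\sum_{i<j}i^k$ have them only for $k\le p-2$.

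The missing ingredient is a weight count. Write $w_j=w+j+\sum_{n\ge1}A_n(j)w^{-n}$ and $B_n(i):=[w^{-n}]\,u(w_i)$, so that $A_n(j)=\sum_{i<j}B_n(i)$. Give $i$ weight $1$ and $w^{-1}$ weight $-1$. Induction on $n$ then shows $\deg A_n\le n$ and $\deg B_n\le n-1$, with coefficients in $\Z_{(p)}$ for $n\le p$, since only Faulhaber sums with exponent $\le p-2$ occur. Moreover $u(w_i)-\frac{1}{w+i}=\frac{1}{w+i}\bigl((1+\frac{\delta_i-1}{w+i})^{-1}-1\bigr)$ has weight $\le -2$. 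Hence the coefficient of $i^{p-1}$ in $B_p$ equals that in $[w^{-p}]\frac{1}{w+i}=i^{p-1}$, namely $1$. Summing over $i\in\{0,\dots,p-1\}$ gives $A_n(p)\equiv 0$ for $n<p$ and $A_p(p)\equiv -1 \pmod p$. Therefore $(z-z^2)^{\circ p}\equiv z+z^{p+2}+\O(z^{p+3})\pmod p$.

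With this added, your argument is a genuine alternative to the paper's. It replaces a derivative in the parameter by a computation for the single map $z-z^2$ in characteristic $p$. It also produces the factor $p$ directly as $\resultant(\Phi_p,\Psi_p)$, instead of through the paper's lemma on polynomials with $a(1)\equiv 0$, $a'(1)\equiv 1$.
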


The remainder of this section is devoted to the proof of this proposition. 
We first  compute $a_k(1)$ and $a_k'(1)$ for arbitrary integers $k\geq 1$. 

\begin{lemma}
For every integer $k\geq 1$,
\[a_k(1) = k!.\]
\end{lemma}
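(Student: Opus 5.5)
We argue by strong induction on $k$, using Yoccoz's recursion~\eqref{eq:recursion}
\[a_0=1,\qquad a_{k+1}=\sum_{j=0}^{k}\gauss{k}{j}a_j a_{k-j},\]
evaluated at $\lambda=1$. The first step is to record the value of the Gaussian polynomials at $\lambda=1$: for $0\leq j\leq k$,
\[\gauss{k}{j}(1)=\binom{k}{j}.\]
This follows directly from the Gauss relation $\gauss{k}{j}=\gauss{k-1}{j-1}+\lambda^j\gauss{k-1}{j}$, which at $\lambda=1$ becomes the Pascal relation; the boundary values $\gauss{k}{0}=\gauss{k}{k}=1$ match. Alternatively, it is the case $q=1$, $\omega=1$ of Lemma~\ref{lem:gausseuclid}, where $r=s=0$ gives $\binom{k}{j}\cdot 1$.

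Setting $u_k:=a_k(1)$, the recursion becomes
\[u_0=1,\qquad u_{k+1}=\sum_{j=0}^{k}\binom{k}{j}u_ju_{k-j}.\]
We claim $u_k=k!$ for all $k\geq 0$. Assuming $u_j=j!$ for $j\leq k$, each term is
\[\binom{k}{j}\,j!\,(k-j)!=k!,\]
and there are $k+1$ of them, so
\[u_{k+1}=(k+1)\,k!=(k+1)!.\]
The base cases $a_0=1$ and $a_1=1$ are consistent with this, as are the listed values $a_2=2$, $a_3(1)=6$ and $a_4(1)=24$. This proves the lemma for all $k\geq 1$.

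As a sanity check, $\lambda=1$ corresponds to the family $z(1-z)$, whose formal linearization does not exist. However, $a_k=\pi_kg_k$ is a polynomial, so evaluating at $1$ is legitimate, and the recursion holds identically in $\Z[\lambda]$. No real obstacle arises; the only point needing care is the identification $\gauss{k}{j}(1)=\binom{k}{j}$, and the Pascal specialization settles it.
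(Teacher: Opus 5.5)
Your proof is correct and is the paper's argument: induction on $k$ using the recursion~\eqref{eq:recursion} at $\lambda=1$, where $\gauss{k}{j}(1)=\binom{k}{j}$ makes each of the $k+1$ terms equal to $k!$. You also justify $\gauss{k}{j}(1)=\binom{k}{j}$ by specializing the Gauss relation to Pascal's rule, a step the paper uses without comment.
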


\begin{proof}
The proof goes by induction. The property is true for $k=0$. And if the property holds for $j\in \ob0;k\cb$, then 
\[a_{k+1}(1) = \sum_{j=0}^k \gauss{k}{j}(1) a_j(1) a_{k-j}(1) =  \sum_{j=0}^k\frac{k!}{j!(k-j)!} j! (k-j)! = (k+1)k! = (k+1)!.\qedhere\]
\end{proof}

The determination of $a_k'(1)$ is more tricky. 

\begin{lemma}\label{lem:akprime1}
For every integer $k\geq 1$,
\[a_k'(1) = k!\left(\frac{k(k-1)}{4} - \sum_{j=0}^{k-2} \frac{j+1}{k-j}\right).\]
\end{lemma}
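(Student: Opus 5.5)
The plan is to differentiate Yoccoz's recursion~\eqref{eq:recursion} at $\lambda=1$ and solve the resulting linear recursion for the normalized quantities
\[u_k := \frac{a_k'(1)}{k!}.\]
This normalization is natural because, by the previous lemma, $a_k(1)=k!$.

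\textbf{Step 1: values of the Gaussian polynomials at $1$.} I need $\gauss{k}{j}(1)$ and $\gauss{k}{j}'(1)$. The claim is
\[\gauss{k}{j}(1)=\binom{k}{j}\quad\text{and}\quad \gauss{k}{j}'(1)=\binom{k}{j}\frac{j(k-j)}{2}.\]
To prove it, write $\rho_i:=\frac{1-\lambda^i}{1-\lambda}=1+\lambda+\cdots+\lambda^{i-1}$. Then $\rho_i(1)=i$ and $\rho_i'(1)/\rho_i(1)=\frac{i-1}{2}$. Set $\sigma_k:=\prod_{i\le k}\rho_i$, so that $\gauss{k}{j}=\sigma_k/(\sigma_j\sigma_{k-j})$. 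Taking the logarithmic derivative at $1$ gives
\[\frac{k(k-1)}{4}-\frac{j(j-1)}{4}-\frac{(k-j)(k-j-1)}{4}=\frac{j(k-j)}{2}.\]

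\textbf{Step 2: a recursion for $u_k$.} Differentiate $a_{k+1}=\sum_{j=0}^k\gauss{k}{j}a_ja_{k-j}$ at $\lambda=1$, using $a_j(1)=j!$ and Step 1. Every term carries the factor $\binom{k}{j}j!(k-j)!=k!$, so
\[(k+1)\,u_{k+1}=\sum_{j=0}^k\frac{j(k-j)}{2}+\sum_{j=0}^k(u_j+u_{k-j})=\frac{(k-1)k(k+1)}{12}+2\sum_{j=0}^k u_j .\]
The initial values are $u_0=u_1=0$, since $a_0=1$ and $a_1=1$ are constant.

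\textbf{Step 3: reduce to a first-order recursion.} Subtract the identity at rank $k$ from the identity at rank $k+1$, for $k\ge1$. The sum of the $u_j$ telescopes, and
\[\frac{(k+1)k(k-1)}{12}-\frac{k(k-1)(k-2)}{12}=\frac{k(k-1)}{4}.\]
This leaves
\[(k+1)u_{k+1}=(k+2)u_k+\frac{k(k-1)}{4}.\]
Dividing by $(k+1)(k+2)$ gives
\[\frac{u_{k+1}}{k+2}-\frac{u_k}{k+1}=\frac{k(k-1)}{4(k+1)(k+2)}.\]
This relation determines $u_k$ from $u_1=0$. As a consistency check, it gives $u_2=0$, in agreement with $a_2=2$.

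\textbf{Step 4: identify the closed form.} Let $v_k$ be the right-hand bracket of the lemma, and let $H_k=\sum_{i=1}^k 1/i$. Substituting $i=k-j$,
\[\sum_{j=0}^{k-2}\frac{j+1}{k-j}=\sum_{i=2}^k\frac{k+1-i}{i}=(k+1)H_k-2k,\]
so that
\[v_k=\frac{k(k-1)}{4}+2k-(k+1)H_k .\]
It remains to check that $v_1=0$ and that $v$ satisfies the first-order recursion of Step 3. This is a routine computation with $H_{k+1}-H_k=\frac1{k+1}$. One has
\[\frac{v_k}{k+1}=\frac{k(k-1)}{4(k+1)}+\frac{2k}{k+1}-H_k,\]
and its increment simplifies to $\frac{k(k-1)}{4(k+1)(k+2)}$. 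Uniqueness of the solution of the recursion then gives $u_k=v_k$, which is the formula of the lemma.

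The only genuinely delicate point is Step 1, the derivative of the Gaussian binomial at $1$, because $\pi_k$ itself vanishes at $1$. Passing through the $\rho_i$, which are nonzero at $1$, avoids this. Everything else is bookkeeping.
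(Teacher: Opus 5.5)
Your proof is correct, and it takes a genuinely different route from the paper. You work entirely within the recursion~\eqref{eq:recursion}. Differentiating it at $\lambda=1$, with $\gauss{k}{j}(1)=\binom{k}{j}$ and $\gauss{k}{j}'(1)=\binom{k}{j}\frac{j(k-j)}{2}$, gives $(k+1)u_{k+1}=\frac{(k-1)k(k+1)}{12}+2\sum_{j=0}^k u_j$. You telescope this to a first-order recursion and match it against the closed form $\frac{k(k-1)}{4}+2k-(k+1)H_k$, where $H_k$ is the harmonic number. All the intermediate identities check out, and so do the test values $a_3'(1)=1$ and $a_4'(1)=14$.

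The paper never differentiates the recursion. It rescales $z=(1-\lambda)w$, so that to first order in $\eps=\lambda-1$ the map is the time-$\eps$ map of the vector field $w(1+w)\partial_w$, which is linearized by $\widetilde G_1(x)=x/(1-x)$. The remaining $\eps^2$-correction $\kappa(x)=-x^3/(1-x)$ determines the first variation $\eta$ of the linearizer through the Lie-bracket equation $x\eta'-\eta=\kappa$. This gives $\gamma_k'(1)=-\sum_{j=0}^{k-2}\frac{j+1}{k-j}$ directly. The term $\frac{k(k-1)}{4}$ only enters when converting back via $a_k=\gamma_k\prod_{j\le k}\rho_j$, which is the same logarithmic-derivative computation as your Step~1.

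Your argument is more elementary and self-contained. As written, though, it verifies a formula known in advance (your first-order recursion could also be summed directly by partial fractions), and it relies on the special form of the quadratic recursion. The paper's argument produces the formula rather than checking it. It also explains where the two terms come from: the normalization by $\pi_k$ versus the first variation of the linearizer. Finally, it is the same perturbative technique used for $g_n'(d)$ in \S\ref{sec:padic}, so it transfers more readily to other families.
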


\begin{proof}
First, consider the change of coordinates  $z = (1-\lambda) w$. 
In those coordinates, the map $F_\lambda$ is conjugate to 
\[{\widetilde F}_\lambda(w) = w + (\lambda-1)w (1 +\lambda w)  = w + (\lambda-1) \phi(w) + \O\bigl((\lambda-1)^2\bigr) \quad \text{with}\quad \phi(w) = w(1+w).\]
For $\lambda\neq 1$, the linearizing map ${\widetilde G}_\lambda $ of  ${\widetilde F}_\lambda$ which is tangent to the identity at the origin is the map 
\[{\widetilde G}_\lambda(x) = \frac{G_\lambda\bigl((1-\lambda) x\bigr)}{1-\lambda} = x  \sum_{k\geq 0} \gamma_k(\lambda)x^k\quad \text{with}\quad \gamma_k(\lambda) := (1-\lambda)^k g_k(\lambda) = a_k(\lambda) \frac{(1-\lambda)^k}{\pi_k(\lambda)} .\]
Note that 
\[\lim_{\lambda \to 1} \frac{\pi_k(\lambda)}{(1-\lambda)^k} = k!,\quad \text{so that}\quad\forall k\geq 0, \quad \gamma_k(1) = 1.\]
In particular, 
\[{\widetilde G}_1(x) := \lim_{\lambda\to 1} {\widetilde G}_\lambda(x) = x \sum_{k\geq 0} x^k =  \frac{x}{1-x}.\]

\begin{remark}
The map ${\widetilde G}_1$ satisfies the differential equation $\phi\circ {\widetilde G}_1(x)   = x {\widetilde G}_1'(x)$. Hence, it linearizes the vector field $\phi(w) \partial_w$, i.e., if we make the change of variables $w = {\widetilde G}_1(x)$, then $\phi(w) \partial_w = x\partial_x$.
\end{remark}

In the coordinate $x$, the map $F_\lambda$ is conjugate to a map $K_\lambda$: if $z = (1-\lambda)w = (1-\lambda) {\widetilde G}_1(x)$, then $F_\lambda(z) = (1-\lambda) {\widetilde F}_\lambda(w) = (1-\lambda) {\widetilde G}_1\circ K_\lambda(x).$
An elementary computation shows that 
\[K_\lambda (x) = \lambda\left(x + (\lambda-1)^2\kappa(x) + \O((\lambda-1)^3)\right)\quad \text{with}\quad \kappa(x)  = -\frac{x^3}{1-x}.\]
Let $H_\lambda$ be the linearizing map of $K_\lambda$  given by ${\widetilde G}_\lambda = {\widetilde G}_1\circ H_\lambda$: 
\[\xymatrix@C=2cm@R=2cm{
\ar@/^3pc/[rrrr]^{G_\lambda}  \dto_{x\mapsto \lambda x} & \lto^{x\mapsto (1-\lambda)x} \dto_{x\mapsto \lambda x} \rto_{H_\lambda}  \ar@/^1pc/[rr]^{{\widetilde G}_\lambda}  & \dto^{K_\lambda} \rto_{{\widetilde G}_1} & \dto^{{\widetilde F}_\lambda}  \rto_{w\mapsto (1-\lambda) w} &  \dto^{F_\lambda}  \\ 
\ar@/_3pc/[rrrr]_{G_\lambda}  & \lto_{x\mapsto (1-\lambda)x}  \rto^{H_\lambda}  \ar@/_1pc/[rr]_{{\widetilde G}_\lambda}   & \rto^{{\widetilde G}_1} &  \rto^{w\mapsto (1-\lambda) w}&
}\]
Setting $\eps:=\lambda-1$, we may write 
\[H_\lambda(x) = x + \eps \eta(x) + \eps^2 \theta(x) + \O\bigl(\eps^3\bigr)\]
with $\eta(x)\in x^2\Q[[x]]$ and $\theta(x)\in x^2\Q[[x]]$.
Note that 
\begin{eqnarray*}
H_\lambda(\lambda x) &=& x +\eps  x + \eps \eta(x+\eps x) + \eps^2 \theta(x) + \O(\eps^3)\\
&=& x + \eps \bigl(x + \eta(x)\bigr) + \eps^2 \bigl(x\eta'(x) + \theta(x)\bigr)+ \O(\eps^3)
\end{eqnarray*}
and 
\begin{eqnarray*}
K_\lambda\circ H_\lambda(x) &=& (1+\eps) \bigl( x+ \eps \eta(x) + \eps^2 \theta(x)+\eps^2\kappa(x)\bigr)+\O(\eps^3)\\
&=& x + \eps \bigl(x + \eta(x)\bigr) + \eps^2 \bigl(\eta(x) + \theta(x)+\kappa(x)\bigr)+ \O(\eps^3).
\end{eqnarray*}
It follows that 
\[x\eta'(x) - \eta(x) = \kappa(x).\]

\begin{remark}
This equation expresses the fact that the Lie bracket of the vector fields $x\partial x$ and $\eta(x)\partial_x$ is the vector field $\kappa(x)\partial_x$. 
\end{remark}

Since 
\[\kappa(x) = -\frac{x^3}{1-x} = x  \sum_{k\geq 2} -x^k,\]
we deduce that 
\[\eta(x) = x  \sum_{k\geq 2} \eta_k x^k\quad \text{with}\quad (k+1) \eta_k - \eta_k = -1, \quad \text{i.e.,}\quad \eta_k = -\frac{1}{k}.\]
Since ${\widetilde G}_\lambda  = {\widetilde G}_1\circ H_\lambda$, we deduce that 
\[ x  \sum_{k\geq 1} \gamma_k'(1) x^k = {\widetilde G}_1'(x)\eta(x).\]
Observe that 
\[{\widetilde G}'_1(x) = \sum_{j\geq 0} (j+1) x^j\quad\text{so that}\quad 
\gamma_k'(1) = - \sum_{j=0}^{k-2} \frac{j+1}{k-j}.\]
Finally, since $a_k(\lambda) = \gamma_k(\lambda)  \ds \prod_{j=1}^k (1+\lambda + \ldots +\lambda^{j-1})$, 
\[\frac{\gamma_k'(\lambda)}{\gamma_k(\lambda)} = \frac{a_k'(\lambda)}{a_k(\lambda)} - \sum_{j=1}^k \frac{1+2\lambda + \cdots +  (j-1) \lambda^{j-2}}{1+\lambda + \cdots + \lambda^{j-1}}.
 \]
It follows that 
\[\gamma_k'(1) = \frac{\gamma_k'(1)}{\gamma_k(1)} = \frac{a_k'(1)}{a_k(1)} - \sum_{j=1}^k \frac{j-1}{2} = \frac{a_k'(1)}{k!} - \frac{k(k-1)}{4},\]
whence
\[a_k'(1) = k!\left(\frac{k(k-1)}{4} - \sum_{j=0}^{k-2} \frac{j+1}{k-j}\right).\qedhere\]
\end{proof}

We may now return to the proof of Proposition~\ref{prop:nprime}. We first observe that $\alpha_2 = 2 \equiv 2\pmod{2^2}$. We may therefore assume that the prime number $n$ is at least $3$. 

\begin{lemma}\label{lem:simpleroot}
If $n\geq 3$ is a prime number, then $a_n(1) \equiv 0\pmod{n}$ and $a'_n(1) \equiv 1\pmod{n}$. 
\end{lemma}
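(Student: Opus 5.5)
The plan is to use the explicit formulas already established: $a_k(1)=k!$ from the preceding lemma, and $a_k'(1)$ from Lemma~\ref{lem:akprime1}. The statement should then reduce to elementary divisibility together with Wilson's theorem. Throughout, $n\geq 3$ is prime.

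\emph{First congruence.} Since $a_n(1)=n!$ and $n\mid n!$, we get $a_n(1)\equiv 0\pmod n$ at once.

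\emph{Second congruence.} By Lemma~\ref{lem:akprime1}, written with integer terms,
\[a_n'(1) = n\cdot\frac{n!\,(n-1)}{4} \;-\; \sum_{j=0}^{n-2} \frac{n!\,(j+1)}{n-j}.\]
I will treat each piece modulo $n$.
\begin{itemize}
\item The first term is divisible by $n$. Indeed $n!\,(n-1)/4$ is an integer, because $n!$ is even and $n-1$ is even since $n$ is odd.
\item For $1\leq j\leq n-2$, we have $2\leq n-j\leq n-1$. Hence $(n-1)!/(n-j)$ is an integer, and $\frac{n!\,(j+1)}{n-j}=n\cdot\frac{(n-1)!}{n-j}\,(j+1)$ is an integer divisible by $n$.
\item The only surviving term is $j=0$, namely $n!/n=(n-1)!$. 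By Wilson's theorem, $(n-1)!\equiv -1\pmod n$.
\end{itemize}
Therefore $a_n'(1)\equiv 0-(-1)=1\pmod n$.

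There is no real obstacle. The only care needed is to confirm that each summand is genuinely an integer before reducing modulo $n$: the rational expression in Lemma~\ref{lem:akprime1} has denominators $4$ and $n-j$, and these are absorbed by the factor $n!$ exactly as described above. That is where the hypothesis $n\geq 3$ (so that $n$ is odd) is used.
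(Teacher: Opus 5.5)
Your proposal is correct and follows essentially the same route as the paper: you use $a_n(1)=n!$, then the formula for $a_n'(1)$ from Lemma~\ref{lem:akprime1}, note that every term except $j=0$ is divisible by $n$, and finish with Wilson's theorem. Your explicit check that each summand is an integer (using that $n$ is odd) is a small but welcome addition that the paper leaves implicit.
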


\begin{proof}
Assume that $n\geq 3$ is a prime number. From $a_n(1) = n!$, we deduce that $a_n(1) \equiv 0\pmod{n}$. 
By Lemma~\ref{lem:akprime1} above, 
\[a_n'(1) = n!\left(\frac{n(n-1)}{4} - \sum_{j=0}^{n-2} \frac{j+1}{n-j}\right).\]
Note that $n$ divides $\ds n! \frac{n(n-1)}{4}$ and the terms $\ds n! \frac{j+1}{n-j}$ when $j\in \ob1;n-2\cb$. Using the Wilson Theorem, we deduce that  
\[a_n'(1) \equiv -n! \frac{0+1}{n-0} \equiv -(n-1)!  \equiv  1 \pmod{n}.\qedhere\]
\end{proof}

The proof of Proposition~\ref{prop:nprime} is now a consequence of the following lemma. This result is probably well-known but we did not find a reference in the literature. 

\begin{lemma}
Assume $p\geq 3$ is a prime number and  $a(\lambda)\in \Z[\lambda]$ satisfies 
\[a(1) \equiv  0\pmod{p}\quad \text{and}\quad a'(1) \equiv 1\pmod{p}.\] 
Then, $\resultant(\Phi_p,a) \equiv p\pmod{p^2}$.
\end{lemma}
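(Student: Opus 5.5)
We will compute $\resultant(\Phi_p,a)=\prod_{\omega\in\Omega_p} a(\omega)$ inside the ring $\Z[\zeta]$, where $\zeta$ is a primitive $p$-th root of unity. The plan is to work modulo the prime ideal above $p$ and then take a norm. Set $\pi:=\zeta-1$. Recall the standard facts: $(\pi)$ is the unique prime of $\Z[\zeta]$ above $p$, and $(p)=(\pi)^{p-1}$. Moreover $\Phi_p(1)=p$ by Corollary~\ref{cor:valuephin1}, and $\Phi_p(1)=\prod_{\omega\in\Omega_p}(1-\omega)$, so the norm of $\zeta-1$ from $\Q(\zeta)$ to $\Q$ is $N(\zeta-1)=\prod_{\omega}(\omega-1)=(-1)^{p-1}p=p$, since $p$ is odd.

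First, expand $a$ around $1$ by Taylor's formula in $\Z[\lambda]$:
\[a(\lambda)=a(1)+a'(1)(\lambda-1)+(\lambda-1)^2 q(\lambda),\qquad q\in\Z[\lambda].\]
Evaluating at $\omega\in\Omega_p$ gives $a(\omega)=a(1)+a'(1)(\omega-1)+(\omega-1)^2q(\omega)$. Since $p\mid a(1)$ and $p\in(\omega-1)^{p-1}\subset(\omega-1)^2$ (here we use $p\geq 3$), we get
\[a(\omega)=(\omega-1)\,u_\omega\quad\text{with}\quad u_\omega:=a'(1)+\frac{a(1)}{\omega-1}+(\omega-1)q(\omega)\in\Z[\omega],\]
and $u_\omega\equiv a'(1)\equiv 1 \pmod{(\omega-1)}$.

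Taking the product over $\omega\in\Omega_p$, which is the Galois orbit of $\zeta$, gives
\[\resultant(\Phi_p,a)=N(\zeta-1)\cdot N(u_\zeta)=p\,N(u_\zeta).\]
It therefore suffices to show $N(u_\zeta)\equiv 1\pmod p$. Write $u:=u_\zeta=1+\pi v$ with $v\in\Z[\zeta]$. Each Galois conjugate $\sigma(u)=1+\sigma(\pi)\sigma(v)$ is again $\equiv 1\pmod{\pi}$, because $\sigma(\pi)$ is an associate of $\pi$. Hence $N(u)=\prod_\sigma\sigma(u)\equiv 1\pmod{\pi}$. Since $N(u)\in\Z$ and $(\pi)\cap\Z=p\Z$, this gives $N(u)\equiv 1\pmod p$. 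Therefore $\resultant(\Phi_p,a)\equiv p\pmod{p^2}$.

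The only delicate point is the divisibility $a(1)/(\omega-1)\in\Z[\omega]$, which needs $v_\pi(p)=p-1\geq 1$; this holds for every prime $p$. The hypothesis $p\geq 3$ is used only to get the sign of $N(\zeta-1)$ and the inclusion $p\in(\pi)^2$. If one prefers to avoid algebraic number theory, there is an alternative: reduce modulo $p$, use $\Phi_p\equiv(\lambda-1)^{p-1}\pmod p$, and compute the resultant via a Sylvester-type expansion. The norm argument above seems cleanest, and I expect no real obstacle beyond recalling the facts $(p)=(\pi)^{p-1}$ and $(\pi)\cap\Z=p\Z$.
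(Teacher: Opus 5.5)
Your proof is correct and follows the paper's route: the Taylor expansion at $1$, together with $p\in(\omega-1)^{p-1}\Z[\omega]$ and $p\geq 3$, gives $a(\omega)=(\omega-1)\bigl(1+(\omega-1)c_\omega\bigr)$ with $c_\omega\in\Z[\omega]$, so the resultant is $p$ times a cofactor. The only difference is the last step. You show the cofactor is $\equiv 1 \pmod p$ via the norm and $(\zeta-1)\cap\Z=p\Z$, whereas the paper reduces $\resultant\bigl(\Phi_p,1+(\lambda-1)c\bigr)$ modulo $p$ using $\Phi_p\equiv(\lambda-1)^{p-1}\pmod p$; this is the alternative you mention at the end.
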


\begin{proof}
Let $p\geq 3$ be a prime number.  For every $\omega\in \Omega_p$,
\[p = \Phi_p(1) = \prod_{k=1}^{p-1}(1-\omega^k) =  \prod_{k=1}^{p-1} \left((1-\omega) \sum_{j=0}^{k-1}\omega^j\right)  = (1-\omega)^{p-1}b(\omega),\]
where 
\[b(\lambda) := \prod_{k=1}^{p-1} \left(\sum_{j=0}^{k-1} \lambda^j\right)\in \Z[\lambda].\]
Since $a(1) \equiv  0\pmod{p}$ and $a'(1) \equiv 1\pmod{p}$, there exist $r,s\in \Z$ such that for every $\omega\in \Omega_p$, 
\[a(1) = r p =  (1-\omega)^2 b_0(\omega) \quad \text{and}\quad a'(1) = 1+sp = 1+ (1-\omega) b_1(\omega)\]
where
\[b_0(\lambda) := r \cdot (1-\lambda)^{p-3} b(\lambda)\in\Z[\lambda] \quad \text{and}\quad b_1(\lambda) := s\cdot (1-\lambda)^{p-2} b(\lambda)\in \Z[\lambda].\]
In addition, we may write
\[a(\lambda) = a(1) +  a'(1) (\lambda-1)  + (\lambda-1)^2 b_2(\lambda) \quad \text{with}\quad b_2 \in \Z[\lambda].\]
Thus, for every $\omega\in \Omega_p$, 
\[a(\omega) = (\omega-1)\bigl(1+ (\omega-1) c(\omega)\bigr)\quad \text{with}\quad  c = b_0-b_1+b_2\in \Z[\lambda].
\]
Therefore, 
\[\resultant(\Phi_p,a) = \resultant(\Phi_p,\lambda-1)\cdot \resultant\bigl(\Phi_p,1 + (\lambda-1)c(\lambda)\bigr) \equiv p\pmod{p^2}\]
since
\[\resultant(\Phi_p,\lambda-1)  = (-1)^{p-1}\Phi_p(1) = p\]
and
\[\resultant\bigl(\Phi_p,1 + (\lambda-1)c(\lambda)\bigr) \equiv \resultant\left((\lambda-1)^{p-1},1+(\lambda-1)c(\lambda)\right) \equiv 1\pmod{p}\]
because $\Phi_p(\lambda) \equiv (\lambda-1)^{p-1} \pmod{p}$. This completes the proof of the lemma. 
\end{proof}

\subsection{Specific values of the polynomials $a_n$}

We have seen that $a_n(1) = n!$ for every $n\in \N$ and following Yoccoz \cite{yoccoz}, it is possible to give an explicit formula of $a_n(-1)$ for every $n\in \N$. 
It is also possible to determine an explicit value of $a_n(\lambda)$ for some other integers $\lambda\in \Z$, namely, for $\lambda=0, \lambda= \pm 2$ and $\lambda=4$. In those three cases, the linearizing map $G_\lambda$ is explicit.

\subsubsection{Values at roots of unity\label{sec:valueroots}}

Given an integer $n\geq 1$, we wish to study the values of $a_n$ at roots of unity which are not necessarily primitive $n$-th roots of unity. Here, $\zeta\in \Omega_q$ denotes a primitive $q$-th root of unity for some integer $q\geq 1$, and as previously, $\omega\in \Omega_n$ denotes a primitive $n$-th root of unity. 

The following result is due to Yoccoz \cite{yoccoz}. We include a proof for completeness. 

\begin{prop}\label{prop:yoccoz}
Assume $q\geq 1$ is an integer, and $\zeta\in \Omega_q$. Then,  
\[\forall n=mq+s\in \N\quad \text{with}\quad m\in \N,\quad s\in \ob 0;q-1\cb, \quad a_n(\zeta) =a_s(\zeta)a_q^m(\zeta) \prod_{k=0}^{m-1} (kq+s+1) .\]
\end{prop}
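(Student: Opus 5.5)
We argue by strong induction on $n$, evaluating the recursion~\eqref{eq:recursion}
\[a_{n}=\sum_{j=0}^{n-1}\gauss{n-1}{j}(\zeta)\,a_j(\zeta)\,a_{n-1-j}(\zeta)\]
at $\lambda=\zeta$ and using Lemma~\ref{lem:gausseuclid} to control the Gaussian coefficients. For $n\leq q$ (that is, $m=0$, or $m=1,s=0$) the formula is a tautology: it reads $a_s(\zeta)=a_s(\zeta)$, or $a_q(\zeta)=a_0(\zeta)a_q(\zeta)$. So we assume $n=mq+s>q$ and that the formula holds for all smaller indices.

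Write $n-1=Mq+S$ with $S\in\ob0;q-1\cb$, and $j=iq+r$ with $r\in\ob0;q-1\cb$. By Lemma~\ref{lem:gausseuclid}, $\gauss{n-1}{j}(\zeta)=0$ when $r>S$, and otherwise
\[\gauss{n-1}{j}(\zeta)=\binom{M}{i}\gauss{S}{r}(\zeta).\]
For $r\leq S$ we have $n-1-j=(M-i)q+(S-r)$ with $S-r\in\ob0;q-1\cb$. The induction hypothesis therefore gives
\[a_j(\zeta)a_{n-1-j}(\zeta)=a_r(\zeta)a_{S-r}(\zeta)\,a_q(\zeta)^{M}\,P_i(r)\,P_{M-i}(S-r),\qquad P_i(r):=\prod_{k=0}^{i-1}(kq+r+1).\]
Note that $P_i(r)=q^i\,\bigl(\tfrac{r+1}{q}\bigr)^{(i)}$ is a rising factorial. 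Summing over $i$, Lemma~\ref{lem:factorials} gives
\[\sum_{i=0}^{M}\binom{M}{i}P_i(r)P_{M-i}(S-r)=q^M\Bigl(\tfrac{S+2}{q}\Bigr)^{(M)}=\prod_{k=0}^{M-1}(kq+S+2).\]
Hence
\[a_n(\zeta)=a_q(\zeta)^M\prod_{k=0}^{M-1}(kq+S+2)\sum_{r=0}^{S}\gauss{S}{r}(\zeta)a_r(\zeta)a_{S-r}(\zeta)=a_q(\zeta)^M\prod_{k=0}^{M-1}(kq+S+2)\;a_{S+1}(\zeta),\]
where the last equality is the recursion~\eqref{eq:recursion} again.

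It remains to match this with the claimed formula, and we split according to $s$. If $s\geq1$, then $M=m$ and $S+1=s$, so the product is $\prod_{k=0}^{m-1}(kq+s+1)$, which is exactly the claim. If $s=0$, then $M=m-1$ and $S=q-1$, so we get
\[a_q(\zeta)^{m-1}a_q(\zeta)\prod_{k=0}^{m-2}(kq+q+1)=a_q(\zeta)^m\prod_{k=1}^{m-1}(kq+1)=a_0(\zeta)\,a_q(\zeta)^m\prod_{k=0}^{m-1}(kq+1),\]
since the $k=0$ factor of the last product equals $1$ and $a_0=1$. This is again the claim.

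The main obstacle is the bookkeeping: checking that the terms with $r>S$ really vanish (Lemma~\ref{lem:gausseuclid}), and identifying the sum over $i$ with the rising-factorial Vandermonde identity through the rescaling by $q$. The induction hypothesis applies to both $j$ and $n-1-j$, since each is smaller than $n$. Cases where $j$ or $n-1-j$ is at most $q$ are also covered, because the formula holds tautologically there.
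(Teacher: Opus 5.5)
Your proof is correct and is essentially the paper's argument. Both use strong induction through the recursion evaluated at $\zeta$, with Lemma~\ref{lem:gausseuclid} killing the terms with $r>S$ and the rescaled Vandermonde identity of Lemma~\ref{lem:factorials} summing over the quotient index, followed by a case split on whether the new index is a multiple of $q$. The only differences are cosmetic: you index the step as $n-1\to n$ rather than $n\to n+1$, and you dispose of $n\leq q$ explicitly as a tautology.
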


\begin{proof}
Let $q\geq 1$ be an integer and fix $\zeta\in \Omega_q$. We prove the property by induction on $n\in \N$. For $n= 0$, we have $a_0(\zeta)=1$ and the result is trivial. Assume the property holds for every integer less than or equal to some integer $n\geq 0$. Consider the Euclidean division $n = mq+s$ with $m\in \N$ and $s\in \ob 0;q-1\cb$. Using the notation of \S\ref{sec:factorials}, we have
\[ \prod_{k=0}^{m-1} (kq+s+1)  = q^m \left(\frac{s+1}{q}\right)^{(m)}.\]
By Lemma~\ref{lem:gausseuclid}, if $k =jq+r\in \ob0;n\cb$ with $r\in \ob0;q-1\cb$, then 
\[\gauss{n}{k}(\zeta) =\begin{cases} \ds \binom{m}{j} \cdot \gauss{s}{r}(\zeta)& \text{if }r\in \ob 0;s\cb\\ 0 & \text{if } r\in \ob s+1;q-1\cb.\end{cases}\]
Consequently,
\begin{eqnarray*}
a_{n+1}(\zeta) &=& \sum_{j=0}^{m}  \sum_{r=0}^s \binom{m}{j} \cdot \left(\gauss{s}{r} a_{jq+r} a_{(m-j)q+(s-r)}\right)(\zeta)\\
&=&\sum_{j=0}^{m}  \sum_{r=0}^s  \binom{m}{j}  q^j\left(\frac{r+1}{q}\right)^{(j)} q^{m-j}\left(\frac{s-r+1}{q}\right)^{(m-j)} \left( \gauss{s}{r} a_r   a_q^j\cdot a_{s-r} a_q^{m-j} \right)(\zeta)\\
&=& a_q^m(\zeta) q^m \sum_{r=0}^s \left( \gauss{s}{r} a_r  a_{s-r} \right)(\zeta)\cdot  \left(\sum_{j=0}^m  \binom{m}{j}  \left(\frac{r+1}{q}\right)^{(j)}\left(\frac{s-r+1}{q}\right)^{(m-j)}\right).
\end{eqnarray*}
By Lemma~\ref{lem:factorials}, the right sum does not depend on $r$ and $s$ and is equal to $\left(\frac{s+2}{q}\right)^{(m)}$. 
Thus, 
\begin{eqnarray*}
a_{n+1}(\zeta) &=& a_q^m(\zeta) q^m\left(\frac{s+2}{q}\right)^{(m)} \sum_{r=0}^s \left( \gauss{s}{r} a_r  a_{s-r} \right)(\zeta)\\
& =& a_{s+1}(\zeta)a_q^m(\zeta) \prod_{k=0}^{m-1} (kq+s+2).
\end{eqnarray*}
Observe that $n+1 = mq+s+1$.  Either $s+1\leq  q-1$, in which case the induction is completed. Or $s+1 = q$, in which case $n+1 = (m+1)q$ and 
\[a_{n+1}(\zeta) = a_q(\zeta) a_q^m(\zeta) \prod_{j=1}^m (jq+1) = a_q(\zeta) a_q^m(\zeta) \prod_{j=0}^m (jq+1)\]
as required. 
\end{proof}

In the particular cases $q=1$ and  $q=2$, we have the following corollaries. 

\begin{coro}
For every $m\in \N$,
\[a_m(1)=m!.\] 
\end{coro}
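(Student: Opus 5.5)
This is the case $q=1$ of Proposition~\ref{prop:yoccoz}. The only primitive $1$st root of unity is $\zeta=1$, so $\Omega_1=\{1\}$. For $q=1$ the Euclidean division $m=m\cdot 1+0$ gives quotient $m$ and remainder $s=0$. Proposition~\ref{prop:yoccoz} then reads
\[a_m(1)=a_0(1)\,a_1(1)^m\prod_{k=0}^{m-1}(k+1).\]
The three ingredients are as follows. First, $a_0=1$ by definition. Second, $a_1=1$, as the recursion~\eqref{eq:recursion} gives $a_1=\gauss{0}{0}a_0^2=1$. Third, the product $\prod_{k=0}^{m-1}(k+1)$ equals $m!$, and for $m=0$ it is the empty product $1=0!$. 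Together these give $a_m(1)=m!$.

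As a sanity check independent of Proposition~\ref{prop:yoccoz}, I would also argue by strong induction on $m$, as in the earlier lemma. At $\lambda=1$ the Gaussian polynomial $\gauss{k}{j}$ specializes to the binomial coefficient $\binom{k}{j}$. This is because $\pi_k(\lambda)/(1-\lambda)^k\to k!$ as $\lambda\to1$, which is exactly the $q=1$ instance of Lemma~\ref{lem:gausseuclid}. The recursion then gives
\[a_{k+1}(1)=\sum_{j=0}^k\binom{k}{j}\,j!\,(k-j)!=(k+1)\,k!=(k+1)!.\]

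There is no real obstacle here. The only points needing care are the degenerate case $m=0$ (the empty product) and checking that the Euclidean division with $q=1$ indeed forces $s=0$, so that the factor $a_s(\zeta)$ reduces to $a_0(1)=1$.
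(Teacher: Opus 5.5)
Your proposal is correct and is exactly the paper's argument: specialize Proposition~\ref{prop:yoccoz} to $q=1$, $\zeta=1$, $s=0$, and use $a_0=a_1=1$ to get $a_m(1)=a_0(1)\,a_1(1)^m\,m!=m!$. Your independent inductive check is the same as the paper's earlier direct lemma in \S\ref{sec:prime}, which applies $\gauss{k}{j}(1)=\binom{k}{j}$ in the recursion~\eqref{eq:recursion}.
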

 
 \begin{proof}
Since $a_0 = a_1 = 1$, we deduce that $a_m(1) = a_{m\cdot 1+0}(1) = a_0(1) a_1^m(1) m! = m!$. 
 \end{proof}

\begin{coro}
For every $n \in \N$,
\[
a_n(-1)= \begin{cases}\ds \frac{(2 m)!}{m!} & \text { if } n=2 m \text { is even } \\ \ds 4^m m! & \text { if } n=2 m+1 \text { is odd. }\end{cases}
\]
\end{coro}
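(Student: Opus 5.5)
We apply Proposition~\ref{prop:yoccoz} with $q=2$ and $\zeta=-1\in\Omega_2$. To do so we need the two base values $a_0(-1)=1$ and $a_1(-1)=1$, together with $a_2(-1)$. Since $a_2=2$ is constant (from the list of the first polynomials, or directly from the recursion~\eqref{eq:recursion}: $a_2=\gauss{1}{0}a_0a_1+\gauss{1}{1}a_1a_0=2$), we have $a_2(-1)=2$.

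\emph{Even case.} Let $n=2m$, so that $s=0$ in the Euclidean division by $q=2$. Proposition~\ref{prop:yoccoz} gives
\[a_{2m}(-1)=a_0(-1)\,a_2(-1)^m\prod_{k=0}^{m-1}(2k+1)=2^m\cdot 1\cdot 3\cdots(2m-1).\]
Using the classical identity $2^m m!\cdot 1\cdot3\cdots(2m-1)=(2m)!$, the product $2^m\cdot1\cdot3\cdots(2m-1)$ equals $\frac{(2m)!}{m!}$, which is the claimed value.

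\emph{Odd case.} Let $n=2m+1$, so that $s=1$. Proposition~\ref{prop:yoccoz} gives
\[a_{2m+1}(-1)=a_1(-1)\,a_2(-1)^m\prod_{k=0}^{m-1}(2k+2)=2^m\cdot 2^m m!=4^m m!,\]
as claimed.

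There is no real obstacle here. The only points needing care are the base values $a_1=1$ and $a_2=2$, and the convention that the product in Proposition~\ref{prop:yoccoz} is empty (equal to $1$) when $m=0$, which makes $n=0$ and $n=1$ consistent with both formulas.
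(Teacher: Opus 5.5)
Your proposal is correct and matches the paper's proof. Both apply Proposition~\ref{prop:yoccoz} with $q=2$ and $\zeta=-1$, use the base values $a_0=a_1=1$ and $a_2=2$, and evaluate the products $2^m\prod_{k=0}^{m-1}(2k+1)=\frac{(2m)!}{m!}$ and $2^m\prod_{k=0}^{m-1}(2k+2)=4^m m!$.
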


\begin{proof}
Note that $a_0 = a_1 = 1$ and $a_2=2$. Therefore,
\[a_{2m}(-1) = 2^m \prod_{k=0}^{m-1} (2k+1)= \frac{(2 m)!}{m!} \quad \text{and}\quad a_{2m+1}(-1) = 2^m \prod_{k=0}^{m-1} (2k+2) = 4^m m! .\qedhere\]
\end{proof}

Yoccoz \cite[Lemma 1, page 76]{yoccoz} also asserts that $a_n(\omega)\neq 0$ for any $\omega\in \Omega_n$. The proof he presents is an arithmetic proof. His proof relies on the following assertion: it follows from Proposition~\ref{prop:yoccoz} that if $a_n$ has a root at $\zeta$, then $a_{nm+r}$ has a root of multiplicity at least $m$ at $\zeta$. However, Proposition~\ref{prop:yoccoz} is only valid at $\zeta$, not for $\lambda \neq \zeta$, and we do not see how to prove Yoccoz's assertion. 

Given integers $k\geq 1$ and $n\geq 0$, define
\[\alpha_{k,n} := \resultant(\Phi_k,a_n)\]
so that $\alpha_n = \alpha_{n,n}$ for every integer $n\geq 1$. Taking the product over all $\zeta\in \Omega_q$ in  Proposition~\ref{prop:yoccoz} yields the following result. 

\begin{prop}
Let $q\geq 1$ be an integer. Then, 
\[\forall n=mq+s\in \N\quad \text{with}\quad m\in \N,\quad s\in \ob 0;q-1\cb, \quad 
\alpha_{q,n} =  \alpha_{q,s} \alpha_q^m \prod_{k=0}^{m-1} (kq+s+1)^{\varphi(q)}.\]
\end{prop}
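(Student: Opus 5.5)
The plan is to take the product of the identity in Proposition~\ref{prop:yoccoz} over all $\zeta\in\Omega_q$. Since $\Phi_q$ is monic with roots exactly the elements of $\Omega_q$, for any $P\in\Z[\lambda]$ we have
\[\resultant(\Phi_q,P)=\prod_{\zeta\in\Omega_q}P(\zeta),\]
which is the convention used throughout the paper (for instance in the definition of $\beta_n$).

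Fix $q\geq 1$ and $n=mq+s$ with $m\in\N$ and $s\in\ob0;q-1\cb$. For each $\zeta\in\Omega_q$, Proposition~\ref{prop:yoccoz} gives
\[a_n(\zeta)=a_s(\zeta)\,a_q(\zeta)^m\prod_{k=0}^{m-1}(kq+s+1).\]
The rational factor $\prod_{k=0}^{m-1}(kq+s+1)$ does not depend on $\zeta$. Multiplying over the $\varphi(q)$ elements of $\Omega_q$ therefore raises it to the power $\varphi(q)$. The remaining factors give $\prod_\zeta a_s(\zeta)=\alpha_{q,s}$ and
\[\prod_\zeta a_q(\zeta)^m=\Bigl(\prod_\zeta a_q(\zeta)\Bigr)^m=\alpha_{q,q}^m=\alpha_q^m.\]
Combining these yields the stated formula for $\alpha_{q,n}$.

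There is essentially no obstacle. The only points to check are the following.
\begin{itemize}
\item The resultant is taken in the normalization $\resultant(\Phi_q,P)=\prod_{\zeta}P(\zeta)$, with no sign factor. This holds because $\Phi_q$ is monic and the paper consistently uses this convention.
\item The degenerate cases are consistent. When $m=0$, the empty product equals $1$ and the formula reduces to $\alpha_{q,n}=\alpha_{q,s}$. When $s=0$, we have $\alpha_{q,0}=\resultant(\Phi_q,1)=1$, and the formula still holds.
\end{itemize}
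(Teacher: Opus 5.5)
Your proposal is correct and matches the paper's argument: the paper also obtains this formula by taking the product of the identity in Proposition~\ref{prop:yoccoz} over all $\zeta\in\Omega_q$. It uses the same normalization $\resultant(\Phi_q,P)=\prod_{\zeta\in\Omega_q}P(\zeta)$, which holds because $\Phi_q$ is monic.
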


\subsubsection{When $n = 2(2^s-1)$\label{sec:2(2s-1)}}

\begin{prop}\label{prop:r1}
Suppose that $n=2(2^s-1)$ with $s\ge1$. Then $\hat\alpha_n$ is odd.
\end{prop}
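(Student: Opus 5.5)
The plan is to reduce the statement to Proposition~\ref{prop:alphanr} by comparing $\hat a_n$ at primitive $n$-th roots of unity with $\hat a_n$ at primitive $k$-th roots of unity, where $k:=2^s-1$ and $n=2k$. The bridge is that, since $k$ is odd, $\omega\mapsto-\omega$ maps $\Omega_k$ bijectively onto $\Omega_n$, and $-\omega\equiv\omega$ modulo any prime above $2$.

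First I would record the $2$-adic normalisation. Here $n+1=2^{s+1}-1$ has $s+1$ binary digits equal to $1$, so $\nu_n=\sigma_2(n+1)-1=s$. Hence $\hat a_n=2^{-s}a_n\in\Z[\lambda]$ and $\hat\alpha_n=\resultant(\Phi_n,\hat a_n)=\prod_{\omega\in\Omega_n}\hat a_n(\omega)$. It therefore suffices to show that $\hat a_n(\omega)$ is a unit at a fixed prime $\mathfrak p$ above $2$ in the ring of integers of $\Q(\omega)$, for every $\omega\in\Omega_n$. Write $\omega=-\zeta$ with $\zeta\in\Omega_k$. Since $\hat a_n$ has integer coefficients and $\omega-\zeta=-2\zeta\in\mathfrak p$, we get $\hat a_n(\omega)\equiv\hat a_n(\zeta)\pmod{\mathfrak p}$. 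So the problem reduces to showing that $\hat a_n(\zeta)$ is a $\mathfrak p$-unit.

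Next I would compute $a_n(\zeta)$ using Proposition~\ref{prop:yoccoz} with $q=k$, $m=2$ and $s=0$ (Euclidean remainder $0$, not to be confused with the exponent $s$). This gives
\[a_n(\zeta)=a_0(\zeta)\,a_k(\zeta)^2\,(0\cdot k+1)(1\cdot k+1)=(k+1)\,a_k(\zeta)^2=2^s a_k(\zeta)^2,\]
and hence $\hat a_n(\zeta)=a_k(\zeta)^2$. By Proposition~\ref{prop:alphanr}, $\alpha_k=\prod_{\zeta\in\Omega_k}a_k(\zeta)$ is odd, because $k+1=2^s$. Each factor $a_k(\zeta)$ is an algebraic integer, so each must be a unit at every prime above $2$. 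Consequently $\hat a_n(\zeta)$ is a $\mathfrak p$-unit, and so is $\hat a_n(\omega)$ for every $\omega\in\Omega_n$. Taking the product shows that $\hat\alpha_n$, a rational integer, has $2$-adic valuation $0$, i.e.\ $\hat\alpha_n$ is odd.

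The step needing the most care is the congruence between the values at $\omega$ and at $\zeta$. One must check that the same prime $\mathfrak p$ can be used throughout; this holds because $\Q(\omega)=\Q(\zeta)$. One must also check that the unit statement for the product $\alpha_k$ transfers to each individual factor $a_k(\zeta)$; this holds because all factors are integral. A minor point is the case $s=1$: then $k=1$, $n=2$, and the formula gives $a_2(-1)=2$ and $\hat a_2=1$, consistent with the argument. Everything else is a direct application of Proposition~\ref{prop:yoccoz} and Proposition~\ref{prop:alphanr}.
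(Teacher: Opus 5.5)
Your proof is correct and follows the paper's route: Proposition~\ref{prop:yoccoz} with $q=2^s-1$ and $m=2$ gives $\hat a_n=\hat a_q^2$ on $\Omega_q$, passing from $\Omega_q$ to $\Omega_n$ is a reduction modulo $2$, and Proposition~\ref{prop:alphanr} finishes. The only difference is packaging: the paper lifts the pointwise identity to $\hat a_n=\hat a_q^2+c\,\Phi_q$ in $\Z[\lambda]$ (using that $\Phi_q$ is monic to divide by $2^s$) and works with resultants via $\Phi_{2q}\equiv\Phi_q \pmod 2$, whereas you evaluate at roots of unity and use $-\zeta\equiv\zeta$ modulo a prime of $\Z[\zeta]$ above $2$.
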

  
\begin{proof}
Set $q:=2^s-1$, so that $n=2q$. 
By Proposition~\ref{prop:yoccoz}, there exists $b\in\Z[\lambda]$ such that
\[a_n = (q+1) a_q^{2} + b\cdot\Phi_q = 2^sa_q^{2} + b\cdot\Phi_q.\]
Now, note that $\nu_n=\sigma_2(2^{s+1}-1)-1=s$ and $\nu_q=\sigma_2(2^s)-1=0$,
and hence $a_n=2^s\hat a_n$, $a_q=\hat a_q$ and 
\[2^s \hat a_n = 2^s \hat a_q^2 + b\cdot \Phi_q.\]
Therefore, since
$\Phi_q\in\Z[\lambda]$ is monic, there exists $c\in\Z[\lambda]$
such that $b=2^sc$, and we have
\[\hat a_n = \hat a_q^2 + c\cdot\Phi_q.\]
As a result, since $\Phi_n = \Phi_{2q} \equiv\Phi_q \pmod2$, 
\[\hat\alpha_n \equiv \resultant\big(\Phi_q,\hat a_q^2\big)\equiv \hat\alpha_q^{2} \equiv 1 \pmod2,\]
thanks to Proposition~\ref{prop:alphanr}. Thus, the proposition is proved.
\end{proof}

\subsubsection{Values for $\lambda=0$\label{sec:an0}}

\begin{prop}
For $k \in \N$, the integer $a_k(0)=g_k(0)$ is the $k$-th Catalan number
\[
a_k(0)=g_k(0)=\frac{(2 k)!}{k!(k+1)!}.
\]
\end{prop}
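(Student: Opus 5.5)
This is the second proof announced in the section on $\hat a_n$. The plan is to exploit that for $\lambda=0$ the linearizing map $G_0$ has an explicit closed form, instead of using the recursion. For $\lambda=0$ the linearizing equation $G_\lambda(\lambda z)=F_\lambda\circ G_\lambda(z)$ is degenerate, so I first work with the coefficients $g_k(\lambda)$. These are rational functions with poles only at roots of unity, so they are regular at $\lambda=0$. I then pass to the limit in a rewritten form of the functional equation. Since $\pi_k(0)=1$, we have $a_k(0)=g_k(0)$, and the statement reduces to identifying the power series $G_0(z):=z\sum_k g_k(0)z^k$.

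To get an equation that survives $\lambda\to0$, write $w=G_\lambda(z)$. The equation reads $G_\lambda(\lambda z)=\lambda w(1-w)$. Expanding gives $G_\lambda(\lambda z)=\lambda z+\sum_{k\ge1}g_k(\lambda)\lambda^{k+1}z^{k+1}$, so after dividing by $\lambda$ we obtain
\[z+\sum_{k\ge1}g_k(\lambda)\lambda^{k}z^{k+1}=G_\lambda(z)\bigl(1-G_\lambda(z)\bigr).\]
Every coefficient is a polynomial expression in the $g_j(\lambda)$ and is regular at $0$. Letting $\lambda\to0$ coefficientwise kills all terms with $\lambda^k$, $k\ge1$, and leaves the quadratic equation
\[G_0(z)\bigl(1-G_0(z)\bigr)=z,\qquad G_0(z)=z+\O(z^2).\]
Equivalently, comparing coefficients of $z^{n+1}$ directly in the recursion for $g_n$ gives $g_n(0)=\sum_{j=0}^{n-1}g_j(0)g_{n-1-j}(0)$, which is the same equation.

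It remains to solve it. The branch tangent to the identity is $G_0(z)=\frac{1-\sqrt{1-4z}}{2}$, the classical generating function of Catalan numbers. Expanding $\sqrt{1-4z}$ by the binomial series gives $g_k(0)=\frac{1}{k+1}\binom{2k}{k}=\frac{(2k)!}{k!(k+1)!}$.

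The only delicate step is the limit $\lambda\to0$. It requires that the $g_k$ have no pole at $0$, which follows from \S\ref{sec:lin}, or more directly from $a_k\in\Z[\lambda]$ and $\pi_k(0)=1$. It also requires that coefficientwise limits commute with taking products of formal power series, which holds because each coefficient involves finitely many $g_j$. Everything else is routine.
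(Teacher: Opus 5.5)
Your proposal is correct and follows the paper's second proof: rewrite the linearizing equation as $G_\lambda(\lambda z)/\lambda = F_1\circ G_\lambda(z)$, let $\lambda\to 0$ to obtain $F_1\circ G_0(z)=z$, and recognize $G_0(z)=\frac{1-\sqrt{1-4z}}{2}$ as the Catalan generating function. Your justification of the coefficientwise limit (each $g_k$ has no pole at $0$) is accurate, and so is your remark that the limit equation is just the Catalan recursion, which is the paper's first proof.
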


\begin{proof}[Second proof]
Since $F_\lambda=\lambda F_{1}$, we may rewrite the linearizing equation as
\[
\frac{G_\lambda(\lambda z)}\lambda=F_{1} \circ G_\lambda(z).
\]
Passing to the limit as $\lambda$ tends to 0 yields
\[
F_{1} \circ G_{0}(z)=z \quad \text { so that } \quad G_{0}(z)=\frac{1-\sqrt{1-4 z}}{2}
\]
and $G_{0}(z) / z$ is the generating series of the Catalan numbers.
\end{proof}

\subsubsection{Values for  $\lambda=2$}

The following result has been used in  \S\ref{sec:padic}. 

\begin{prop}
For $k \in \N$,
$$
g_k(2)= \frac{(-2)^k}{(k+1)!} \quad \text { and } \quad a_k(2)=\frac{(-2)^k}{(k+1)!} \cdot \prod_{j=1}^k (1-2^j) .
$$
\end{prop}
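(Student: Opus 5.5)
The plan is to find $G_2$ explicitly, exactly as was done for $G_d$ in \S\ref{sec:padic}, and then read off its Taylor coefficients. For $\lambda=2$ we have $F_2(z)=2z(1-z)=\frac12\bigl(1-(1-2z)^2\bigr)$. Setting $u=1-2z$ turns this into $1-2F_2(z)=(1-2z)^2$, so $F_2$ is conjugate to $u\mapsto u^2$. This map is in turn conjugated to the linear map $w\mapsto 2w$ by $u=\e^{-w}$.

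The candidate linearizer is therefore
\[G_2(z)=\frac{1-\e^{-2z}}{2}.\]
We check it directly. On one hand,
\[F_2\bigl(G_2(z)\bigr)=\frac12\Bigl(1-\bigl(1-2G_2(z)\bigr)^2\Bigr)=\frac12\bigl(1-\e^{-4z}\bigr).\]
On the other hand, $G_2(2z)=\frac12(1-\e^{-4z})$, so the two sides agree. We also have $G_2(z)=z+\O(z^2)$. Hence, by the uniqueness of the formal linearizing series normalized by $g_0=1$ (valid since $2$ is neither $0$ nor a root of unity), this $G_2$ is the linearizer of \S\ref{sec:lin}.

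Next we expand:
\[G_2(z)=\frac12\sum_{j\geq1}\frac{-(-2z)^j}{j!}=z\sum_{k\geq0}\frac{(-2)^k}{(k+1)!}z^k,\]
where we substituted $j=k+1$ and used $-(-2)^{k+1}/2=(-2)^k$. This gives $g_k(2)=\frac{(-2)^k}{(k+1)!}$.

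Finally, $a_k=\pi_k g_k$ is a polynomial, and $\pi_k(2)\neq0$, so $g_k$ has no pole at $2$. Evaluating at $\lambda=2$ gives
\[a_k(2)=\pi_k(2)\,g_k(2)=\frac{(-2)^k}{(k+1)!}\prod_{j=1}^k(1-2^j).\]

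There is no real obstacle here. The one point needing care is that the evaluation $g_k(2)$ of the rational function coincides with the coefficient of the linearizer of $F_2$. This holds because the recursion for $g_k$ in \S\ref{sec:lin} can be specialized at $\lambda=2$, since the only denominators are $\lambda^j-1$, which do not vanish at $\lambda=2$.
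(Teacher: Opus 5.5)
Your proof is correct and follows the paper's own argument. You conjugate $F_2$ to $w\mapsto w^2$ via $w=1-2z$, identify the linearizer $G_2(z)=\frac{1-\e^{-2z}}{2}$, and read off its Taylor coefficients; you also check the functional equation and justify that evaluating the rational functions $g_k$ at $\lambda=2$ gives the coefficients of the linearizer of $F_2$, details the paper leaves implicit.
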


\begin{proof}
The map $F_2$ is conjugate to $w \mapsto w^2$ via the change of variable $w=1-2 z$. The linearizing map $G_2$ is
\[
G_2(z)=\frac{1-\mathrm{e}^{-2 z}}{2}=\sum_{k \geq 1}-\frac{(-2 z)^{k}}{2 k!}.\qedhere
\]
\end{proof}

\subsubsection{Values for  $\lambda=4$}

\begin{prop}
For $k \in \N$,
\[
g_k(4)=(-1)^k \frac{2^{2 k+1}}{(2 k+2)!} \quad \text { and } \quad a_k(4)=(-1)^k \frac{2^{2 k+1}}{(2 k+2)!} \cdot \prod_{j=1}^k(1-4^j).
\]
\end{prop}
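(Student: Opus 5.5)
The plan mirrors the proof for $\lambda=2$: find the linearizing map $G_4$ in closed form, then read off its coefficients. Since $4$ is neither $0$ nor a root of unity, \S\ref{sec:lin} shows that $G_4$ is the unique formal power series with $G_4(z)=z+\O(z^2)$ and $G_4(4z)=F_4\circ G_4(z)$. It is therefore enough to exhibit one such series explicitly.

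The map $F_4(z)=4z(1-z)$ is the Chebyshev-type map semiconjugate to doubling through $z=\sin^2 t$, because $F_4(\sin^2 t)=4\sin^2 t\cos^2 t=\sin^2(2t)$. Multiplying $z$ by $4$ should correspond to doubling $t$, so I would take
\[G(z):=\sin^2\bigl(\sqrt z\bigr)=\frac{1-\cos\bigl(2\sqrt z\bigr)}{2}.\]
Only even powers of $\sqrt z$ occur in the cosine series, so $G$ is a genuine power series in $z$ with rational coefficients. It satisfies
\[G(4z)=\sin^2\bigl(2\sqrt z\bigr)=F_4\bigl(\sin^2\sqrt z\bigr)=F_4\circ G(z)\]
as an identity of formal power series; this follows from the trigonometric identity, or directly from $1-\cos 2u=2\sin^2u$ and the double-angle formula.

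Next I would expand the cosine series:
\[G(z)=\sum_{m\geq1}\frac{(-1)^{m+1}2^{2m}}{2\,(2m)!}z^m .\]
The leading term is $z$, so $G$ is tangent to the identity. Setting $m=k+1$ gives the coefficient of $z^{k+1}$, namely
\[g_k(4)=\frac{(-1)^k2^{2k+2}}{2(2k+2)!}=(-1)^k\frac{2^{2k+1}}{(2k+2)!}.\]
Finally, the formula for $a_k(4)$ follows at once from $a_k=\pi_k g_k$ with $\pi_k(4)=\prod_{j=1}^k(1-4^j)$.

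There is no real obstacle here. The only points that need care are that $G$ is indeed a power series in $z$, not in $\sqrt z$, and the index shift between $m$ and $k$ when reading off the coefficients.
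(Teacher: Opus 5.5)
Your proof is correct and is essentially the paper's argument: the paper also writes down the explicit linearizing map $G_4(z^2)=\frac{1-\cos(2z)}{2}$, which is your $G(z)=\sin^2(\sqrt z)$, and reads off its coefficients (the paper motivates it via the conjugacy $w=2-4z$ to the Chebyshev polynomial $w^2-2$ rather than via $z=\sin^2 t$). You also spell out two details the paper leaves implicit: that $G_4$ is unique because $4$ is neither $0$ nor a root of unity, and that $G$ is a genuine power series in $z$ rather than in $\sqrt z$.
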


\begin{proof}
The map $F_{4}$ is conjugate to the Chebyshev polynomial $w \mapsto w^2-2$ via the change of variables $w=2-4 z$. The linearizing map $G_{4}$ satisfies
\[
G_{4}(z^2)=\frac{1-\cos (2 z)}{2}=\sum_{k \geq 1}(-1)^{k-1} \frac{(2 z)^{2 k}}{2(2 k)!}.\qedhere
\]
\end{proof}

\subsubsection{Values for  $\lambda=-2$}

\begin{prop}
For $k \in \N$,
\[
g_k(-2)= \begin{cases}\ds \frac{1}{(k+1)!} \left(-\frac{4}{3}\right)^{m} & \text { if } k =2m \text { is even } \\ \ds - \frac{1}{2(k+1)!} \left(-\frac{4}{3}\right)^{m+1} & \text { if } k=2m+1 \text { is odd }\end{cases}
\]
and
$$
a_k(-2)=g_k(-2) \cdot \prod_{j=1}^k(1-(-2)^j)
$$
\end{prop}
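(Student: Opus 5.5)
Following the pattern of the $\lambda=2$ and $\lambda=4$ cases, the plan is to find an explicit conjugacy of $F_{-2}$ to a map whose linearization is known, and read off the coefficients $g_k(-2)$ from the resulting closed form for $G_{-2}$. The formula for $a_k(-2)$ then follows at once from $a_k=\pi_k g_k$, since $\pi_k(-2)=\prod_{j=1}^k\bigl(1-(-2)^j\bigr)$. So the real task is the formula for $g_k(-2)$.

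The map $F_{-2}(z)=-2z(1-z)$ has critical point $\tfrac12$ and $F_{-2}(\tfrac12)=-\tfrac12$, $F_{-2}(-\tfrac12)=\tfrac32$, $F_{-2}(\tfrac32)=\tfrac32$. So it is the Chebyshev polynomial again, up to affine conjugacy but with a sign flip: $F_{-2}$ is conjugate to $w\mapsto w^2-2$ via an affine change of variable sending $0$ to the fixed point $w=-1$ (which has multiplier $-2$). Concretely, I would check that $w=4z-1$ gives $w^2-2=16z^2-8z-1=4F_{-2}(z)-1$. With $w=2\cos\theta$ and the Chebyshev relation $T(2\cos\theta)=2\cos 2\theta$, the fixed point $w=-1$ corresponds to $\theta_0=2\pi/3$. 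Since $2\theta_0\equiv-\theta_0$, the linearization near $\theta_0$ reads $\theta_0+t\mapsto\theta_0-2t$. Hence $G_{-2}(\zeta)=\tfrac14\bigl(1+2\cos(\tfrac{2\pi}3+c\zeta)\bigr)$, with $c$ chosen so that $G_{-2}'(0)=1$.

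Expanding, $\cos(\tfrac{2\pi}3+u)=-\tfrac12\cos u-\tfrac{\sqrt3}2\sin u$, so
\[G_{-2}(\zeta)=\tfrac14\bigl(1-\cos(c\zeta)-\sqrt3\sin(c\zeta)\bigr).\]
The normalization $-\sqrt3c/4=1$ gives $c=-4/\sqrt3$. I then collect coefficients of $\zeta^{k+1}$, splitting by parity of $k$:
\begin{itemize}
\item if $k+1$ is odd, the term comes from $-\tfrac{\sqrt3}{4}\sin(c\zeta)$;
\item if $k+1$ is even, it comes from $-\tfrac14\cos(c\zeta)$.
\end{itemize}
In each case $c^{k+1}$ combines with $\sqrt3$ to produce powers of $-\tfrac43$ with no irrationality left. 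For instance, $k=0$ gives $1$ and $k=1$ gives $-\tfrac14\cdot(-\tfrac12)c^2=\tfrac{c^2}{8}=\tfrac23$, matching $-\tfrac{1}{2\cdot2!}(-\tfrac43)$. Finally, I would verify uniqueness: $G_{-2}$ satisfies $G_{-2}(-2\zeta)=F_{-2}\circ G_{-2}(\zeta)$ and is tangent to the identity, and $-2$ is not a root of unity, so by \S\ref{sec:lin} it is the linearizing series.

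The main obstacle is bookkeeping. I need to pick the correct branch $\theta_0$ and the sign of $c$, and to make sure the parity cases of the stated formula (including the extra factor $-\tfrac12$ in the odd case) come out exactly. I would fix these by checking the cases $k=0,1,2$ directly against the recursion~\eqref{eq:recursion}. For example, $a_2(-2)=2$ and $\pi_2(-2)=3\cdot(-3)=-9$ give $g_2(-2)=-\tfrac29=\tfrac1{3!}(-\tfrac43)$, which is consistent.
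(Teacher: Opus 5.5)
Your route is the paper's: conjugate $F_{-2}$ to the Chebyshev map, write the linearizer as an explicit cosine, read off the coefficients, then use $a_k=\pi_k g_k$. But there is a genuine error: the conjugacy you write down is false, and it corrupts every coefficient with $k\geq 1$. Since $F_{-2}(z)=2z^2-2z$, we have $4F_{-2}(z)-1=8z^2-8z-1\neq(4z-1)^2-2$. Comparing coefficients in $(az+b)^2-2=aF_{-2}(z)+b$ forces $a^2=2a$ and $2ab=-2a$, so the only affine conjugacy is $w=2z-1$, which is the one the paper uses.

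Your series $\tfrac14\bigl(1+2\cos(\tfrac{2\pi}3+c\zeta)\bigr)$ with $c=-4/\sqrt3$ is in fact $\tfrac12G_{-2}(2\zeta)$. It is tangent to the identity, but it does not satisfy $G(-2\zeta)=F_{-2}\circ G(\zeta)$, because multiplying by $\tfrac12$ does not commute with the nonlinear map $F_{-2}$. So the uniqueness verification in your last step would fail. Its $\zeta^{k+1}$-coefficient is $2^kg_k(-2)$, not $g_k(-2)$. Concretely, $c^2=\tfrac{16}3$ produces powers of $-\tfrac{16}3$ rather than $-\tfrac43$. For $k=1,2$ you get $\tfrac23$ and $-\tfrac89$, whereas $g_1(-2)=\tfrac1{1-(-2)}=\tfrac13$ and $g_2(-2)=-\tfrac29$. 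Your $k=1$ sanity check hides this through an arithmetic slip: $-\tfrac1{2\cdot2!}\bigl(-\tfrac43\bigr)=\tfrac13$, not $\tfrac23$. At $k=2$ you computed $-\tfrac29$ from the recursion but never compared it with your own series, which gives $-\tfrac89$.

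The problem is therefore neither the branch nor the sign; your $\theta_0=\tfrac{2\pi}3$ and $c<0$ are correct. It is the scale of the affine change of variable. With $w=2z-1$ one gets
$G_{-2}(\zeta)=\tfrac12+\cos\bigl(\tfrac{2\pi}3+c\zeta\bigr)=\tfrac12-\tfrac12\cos(c\zeta)-\tfrac{\sqrt3}2\sin(c\zeta)$.
The normalization $-\tfrac{\sqrt3}2c=1$ gives $c=-\tfrac2{\sqrt3}$, hence $c^2=\tfrac43$. The sine series then yields the even case of the statement, and the cosine series, with its prefactor $-\tfrac12$, yields the odd case. With this correction, your argument becomes the paper's proof, including the uniqueness remark and the deduction of $a_k(-2)$ from $a_k=\pi_kg_k$.
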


\begin{proof}
In that case, the map $F_{-2}$ is conjugate to the Chebyshev polynomial $w \mapsto w^2-2$ via the change of variables $w=2 z-1$. The linearizing map $G_{-2}$ satisfies
\begin{eqnarray*}
G_{-2}(z) & = &\frac{1}{2}+\cos \left(\frac{-2 \sqrt{3}}{3} z+\frac{2 \pi}{3}\right) \\
& = &\frac{1}{2}+\frac{\sqrt{3}}{2} \sin \left(\frac{2 \sqrt{3}}{3} z\right)-\frac{1}{2} \cos \left(\frac{2 \sqrt{3}}{3} z\right) \\
& =&\sum_{k \geq 0}\left(-\frac{4}{3}\right)^{k} \frac{z^{2 k+1}}{(2 k+1)!}- \sum_{k \geq 1}\left(-\frac{4}{3}\right)^{k} \frac{z^{2 k}}{2(2 k)!}.\qedhere
\end{eqnarray*}
\end{proof}

\end{document}